\theoremstyle{plain}
\newtheorem{thm}{Theorem}[section]
\newtheorem{defn}{Definition}[section]
\newtheorem{prop}[thm]{Proposition}
\newtheorem{lem}[thm]{Lemma}
\theoremstyle{remark}
\newtheorem{rem}{Remark}
\numberwithin{equation}{section}
\DeclareMathOperator{\hdim}{\dim_H}
\newcommand{\dif}{ \, \mathrm d}
\newcommand{\rc}{\mathcal R}
\newcommand{\N}{\mathbb N}
\newcommand{\R}{\mathbb R}
\newcommand{\lm}{\mathcal L}
\newcommand{\mc}{M^{*(d-1)}}
\newcommand{\mcl}{M_*^{(d-1)}}
\newcommand{\cu}{\mathcal U}
\newcommand{\cq}{\mathcal Q}
\newcommand{\ck}{\mathcal K}
\newcommand{\cb}{\mathcal B}
\newcommand{\cp}{\mathcal P}
\newcommand{\br}{\mathbf{r}}
\newcommand{\bx}{\mathbf{x}}
\newcommand{\bz}{\mathbf{z}}
\newcommand{\bu}{\mathbf{u}}
\newcommand{\bv}{\mathbf{v}}
\newcommand{\bt}{\mathbf{t}}
\newcommand{\pro}{(\mathbf{P1})}
\newcommand{\pros}{(\mathbf{P2})}
\begin{document}
		\title[Quantitative recurrence properties for piecewise expanding maps]{Quantitative recurrence properties for piecewise expanding maps on $ [0,1]^d $}
	\author{Yubin He}

	\address{Department of Mathematics, Shantou University, Shantou, Guangdong, 515063, China}

	\email{ybhe@stu.edu.cn}

	\author{Lingmin Liao}

	\address{School of Mathematics and Statistics, Wuhan University, Wuhan, Hubei 430072, China}

	\email{lmliao@whu.edu.cn}

	\subjclass[2010]{37E05, 37A05, 37B20}

	\keywords{quantitative recurrence properties, piecewise expanding maps, Hausdorff dimension, zero-one law.}
	\begin{abstract}
		 Let $ T\colon[0,1]^d\to [0,1]^d $ be a piecewise expanding map with an absolutely continuous invariant measure $ \mu $. Let $ \{H_n\} $ be a sequence of hyperrectangles or hyperboloids centered at the origin. Denote by $ \rc(\{H_n\}) $  the set of points $ \bx\in[0,1]^d $ such that $ T^n\bx\in \bx+H_n $ for infinitely many $ n\in\N $, where $ \bx+H_n $ is the translation of $ H_n $. We prove that if $ \mu $ is exponential mixing and the density of $ \mu $ is sufficiently regular, then the $\mu$-measure of $ \rc(\{H_n\}) $ is zero or full according as the sum of the volumes of $ H_n $ converges or not. In the case that $ T $ is a matrix transformation, our results extend a previous work of Kirsebom, Kunde, and Persson [to appear in Ann. Sc. Norm. Super. Pisa Cl. Sci., 2023] in two aspects: by allowing the matrix to be non-integer and by allowing the `targets' $ H_n $ to be hyperrectangles or hyperboloids. We also obtain a dimension result when $ T $ is a diagonal matrix transformation.
	\end{abstract}
	\maketitle

\section{Introduction}
Let $ (X,d,T,\mu) $ be a probability measure-preserving system endowed with a compatible metric $ d $ so that $ (X,d) $ is complete and separable. Poincar\'e's recurrence theorem asserts that $ \mu $-almost all points $ x\in X $ are recurrent, i.e.
\[\liminf_{n\to\infty} d(T^nx,x)=0.\]
It is natural to ask with which rate a typical point returns close to itself.  In his pioneering paper \cite{Boshernitzan93}, Boshernitzan proved the following
quantitative recurrence result.
\begin{thm}[{\cite[Theorem 1.2]{Boshernitzan93}}]
	Let $ (X,d,T,\mu) $ be a probability measure-preserving system endowed with a metric $ d $. Assume that for some $ \alpha>0 $ the $ \alpha $-Hausdorff measure $ \mathcal H^\alpha $ is $ \sigma $-finite on $ (X,d) $. Then for $ \mu $-almost every $ x\in X $, we have
	\[\liminf_{n\to\infty} n^{1/\alpha}d(T^nx,x)<\infty.\]
	Moreover, if $ \mathcal H^\alpha(X)=0 $, then for $ \mu $-almost every $ x\in X $, we have
	\[\liminf_{n\to\infty} n^{1/\alpha}d(T^nx,x)=0.\]
\end{thm}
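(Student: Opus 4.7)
My plan is to reduce the theorem to the following master estimate: for every Borel $Y \subseteq X$ with $\mathcal{H}^\alpha(Y) < \infty$ and every $c > 0$,
\begin{equation*}
\mu\Bigl(\Bigl\{x \in Y : \liminf_{n \to \infty} n^{1/\alpha} d(T^n x, x) > c\Bigr\}\Bigr) \leq \frac{C_\alpha\, \mathcal{H}^\alpha(Y)}{c^\alpha},
\end{equation*}
where $C_\alpha$ depends only on $\alpha$. Granting this, the first conclusion follows by $\sigma$-finiteness (write $X = \bigcup_k Y_k$ with $\mathcal{H}^\alpha(Y_k) < \infty$, apply the estimate to each $Y_k$, and let $c \to \infty$), and the second follows because the right-hand side vanishes when $\mathcal{H}^\alpha(Y) = 0$.

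For the master estimate, for each $k \in \N$ I would invoke the definition of $\mathcal{H}^\alpha$ to produce a Borel cover $\{U_i^k\}_i$ of $Y$ with $\delta_i^k := \operatorname{diam}(U_i^k) \leq 2^{-k}$ and $\sum_i (\delta_i^k)^\alpha \leq \mathcal{H}^\alpha(Y) + 2^{-k}$. Set $n_i^k := \lceil c^\alpha/(\delta_i^k)^\alpha \rceil$ and define the ``non-returning'' sets
\begin{equation*}
B_i^k := \{x \in U_i^k : T^m x \notin U_i^k \text{ for all } 1 \leq m \leq n_i^k\}, \qquad C_k := \bigcup_i B_i^k.
\end{equation*}
The key combinatorial claim is that $B_i^k, T^{-1}B_i^k, \ldots, T^{-n_i^k}B_i^k$ are pairwise disjoint: if $T^{j_1}x, T^{j_2}x \in B_i^k$ with $0 \leq j_1 < j_2 \leq n_i^k$, then $y := T^{j_1}x \in B_i^k$ while $T^{j_2-j_1}y = T^{j_2}x \in U_i^k$, contradicting the defining property of $y \in B_i^k$ since $j_2 - j_1 \in \{1,\ldots,n_i^k\}$. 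By the $T$-invariance of $\mu$ this yields $\mu(B_i^k) \leq 1/(n_i^k+1) \leq (\delta_i^k)^\alpha/c^\alpha$, and summing gives $\mu(C_k) \leq (\mathcal{H}^\alpha(Y) + 2^{-k})/c^\alpha$.

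To close, I would verify the inclusion $\{\liminf_n n^{1/\alpha} d(T^n x, x) > 2^{1/\alpha} c\} \subseteq \liminf_k C_k$. For $x \notin C_k$, some $U_i^k$ containing $x$ provides $m_k \in \{1,\ldots,n_i^k\}$ with $d(T^{m_k}x, x) \leq \delta_i^k$, and the calibration $(n_i^k)^{1/\alpha} \delta_i^k \leq 2^{1/\alpha} c$ (valid once $\delta_i^k \leq c$) gives $m_k^{1/\alpha} d(T^{m_k}x, x) \leq 2^{1/\alpha} c$. If this happens for infinitely many $k$, then either $m_k \to \infty$ along a subsequence (so $\liminf \leq 2^{1/\alpha}c$) or some $m_0$ repeats infinitely often, in which case $\delta_i^k \to 0$ forces $T^{m_0}x = x$ and hence $\liminf = 0$. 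Fatou's lemma applied to $\liminf_k C_k$ then delivers the master estimate with $C_\alpha = 2$. The main obstacle is choosing the correct calibration $n_i^k \cdot (\delta_i^k)^\alpha \asymp c^\alpha$: this balance between the Kac-type disjointness bound $1/(n_i^k+1)$ and the Hausdorff sum $\sum_i (\delta_i^k)^\alpha$ is exactly what controls the total measure by $\mathcal{H}^\alpha(Y)/c^\alpha$ rather than a much larger quantity, and any looser pairing destroys the dependence on $c$ required for the conclusions.
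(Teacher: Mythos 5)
The paper does not prove this theorem; it merely quotes it from Boshernitzan's 1993 paper, so there is no in-paper argument to compare against. Your proof is correct, and it is essentially Boshernitzan's own strategy: take $\delta$-covers nearly realizing $\mathcal{H}^\alpha(Y)$, use the Poincar\'e/Kac-type observation that the ``non-return'' set $B_i^k$ satisfies $(n_i^k+1)\mu(B_i^k)\le 1$ because $B_i^k,T^{-1}B_i^k,\dots,T^{-n_i^k}B_i^k$ are pairwise disjoint, calibrate $n_i^k\asymp c^\alpha/(\delta_i^k)^\alpha$ so the Hausdorff sum controls $\sum_i\mu(B_i^k)$, and pass to the limit by Fatou. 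Two small points worth tightening: the inclusion should read $\{x\in Y:\liminf_n n^{1/\alpha}d(T^nx,x)>2^{1/\alpha}c\}\subseteq\liminf_k C_k$, since it is membership in $Y$ that guarantees $x$ lies in some $U_i^k$; and one should note that the covering sets $U_i^k$ can be taken Borel (e.g.\ open) without affecting $\sum_i(\delta_i^k)^\alpha$, so that the $B_i^k$ are measurable and the Kac bound applies. Neither affects the correctness of the argument, which yields the master estimate with $C_\alpha=2$ and then both conclusions by $\sigma$-finiteness and $c\to\infty$ (resp.\ $c\to 0$ when $\mathcal{H}^\alpha(X)=0$).
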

\noindent Later, Barreira and Saussol \cite{BarreiraSaussol01} related the recurrence rate to the local pointwise dimension.
\begin{thm}[{\cite[Theorem 1]{BarreiraSaussol01}}]
	If $ T:X\to X $ is a Borel measurable map on $ X\subset \R^d $, and $ \mu $ is a $ T $-invariant probability measure on $ X $, then for $ \mu $-almost every $ x\in X $, we have
	\[\liminf_{n\to\infty}n^{1/\alpha}d(T^nx,x)=0\quad\text{for any }\alpha>\liminf_{r\to 0}\frac{\log\mu(B(x,r))}{\log r}.\]
\end{thm}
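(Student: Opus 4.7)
The plan is to reduce the almost-sure statement to showing, for every triple of rational parameters $0<\alpha'<\alpha$ and $\epsilon>0$, that the set
\[ F = F(\alpha,\alpha',\epsilon) := \bigl\{x\in X : \underline{d}_\mu(x)<\alpha' \text{ and } d(T^nx,x)\geq \epsilon\, n^{-1/\alpha} \text{ for all sufficiently large } n\bigr\} \]
has $\mu$-measure zero, where $\underline{d}_\mu(x):=\liminf_{r\to 0}\log\mu(B(x,r))/\log r$. The theorem will then follow by taking a countable union over all such triples.

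Fix such parameters and a tolerance $\eta>0$. For each $x\in F$ the inequality $\underline{d}_\mu(x)<\alpha'$ supplies arbitrarily small radii $r_x<\eta$ with $\mu(B(x,r_x))>r_x^{\alpha'}$. Applying the Besicovitch covering theorem (which is available because $X\subset\R^d$), I would extract from $\{B(x,r_x)\}_{x\in F}$ a countable subfamily $\{B_j:=B(x_j,r_j)\}$ of bounded multiplicity covering $F$.

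The heart of the argument is a Kac-type return-time estimate applied to each $B_j$. Set $m_j:=\bigl\lfloor(\epsilon/(2r_j))^{\alpha}\bigr\rfloor$. If $y\in F\cap B_j$ and $T^ny\in B_j$ for some $1\leq n\leq m_j$, then $d(T^ny,y)\leq 2r_j\leq \epsilon\, n^{-1/\alpha}$, contradicting the definition of $F$; hence the first-return time $\tau_{B_j}(y)$ exceeds $m_j$ for every $y\in F\cap B_j$. Combining the general bound $\int_{B_j}\tau_{B_j}\,d\mu\leq \mu(X)=1$ (a consequence of Poincar\'e recurrence alone, no ergodicity needed) with Markov's inequality yields $\mu(F\cap B_j)\leq 1/m_j\lesssim r_j^{\alpha}$. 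Summing over the Besicovitch subcover and using $\mu(B_j)>r_j^{\alpha'}$ together with the bounded multiplicity,
\[ \mu(F)\leq \sum_j\mu(F\cap B_j)\lesssim \sum_j r_j^{\alpha-\alpha'}\cdot r_j^{\alpha'}\leq \eta^{\alpha-\alpha'}\sum_j\mu(B_j)\lesssim \eta^{\alpha-\alpha'}. \]
Letting $\eta\to 0$ forces $\mu(F)=0$, which is the required conclusion.

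The step I expect to be the main obstacle is the transition from the $x$-dependent radii $r_x$ furnished by the pointwise-dimension hypothesis to a single covering on which the Kac estimate can be summed without losing the gain $\eta^{\alpha-\alpha'}$; the Besicovitch covering theorem is precisely calibrated for this and is the reason the result is stated for $X\subset\R^d$. A secondary technical point is to keep everything valid without an ergodicity assumption, which is handled by using the weak form $\int_{B_j}\tau_{B_j}\,d\mu\leq 1$ rather than the sharper Kac equality, at the cost of only a harmless constant.
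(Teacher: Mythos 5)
This statement is quoted in the paper from Barreira and Saussol \cite{BarreiraSaussol01} without a proof, so there is no in-paper argument to compare against. Your strategy --- Besicovitch covering in $\R^d$ combined with a Kac-type return-time estimate and Markov's inequality --- is precisely the Barreira--Saussol argument, and most of the computation is sound, including the use of $\int_{B_j}\tau_{B_j}\,d\mu\le 1$ (valid for any probability-measure-preserving endomorphism) and the bookkeeping that produces the gain $\eta^{\alpha-\alpha'}$ via $\mu(B_j)>r_j^{\alpha'}$ and the bounded multiplicity of the Besicovitch subfamily.

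There is, however, a genuine gap at the central step. You define $F$ by requiring $d(T^nx,x)\ge\epsilon\,n^{-1/\alpha}$ only ``for all sufficiently large $n$,'' but the contradiction you derive (``$T^ny\in B_j$ for some $1\le n\le m_j$ forces $d(T^ny,y)\le 2r_j\le\epsilon\,n^{-1/\alpha}$'') conflicts with membership in $F$ only if that $n$ exceeds the unspecified, $y$-dependent threshold. A short first return at a time $n$ below the threshold contradicts nothing, so the asserted bound $\tau_{B_j}(y)>m_j$, and with it $\mu(F\cap B_j)\le 1/m_j$, does not follow as written. The repair is standard: if $\liminf_n n^{1/\alpha}d(T^nx,x)>0$, then $d(T^nx,x)>0$ for every $n\ge1$ (a vanishing distance at time $p$ gives $d(T^{kp}x,x)=0$ for all $k$, forcing the liminf to be $0$), hence $\inf_{n\ge1}n^{1/\alpha}d(T^nx,x)>0$; so you may strengthen the defining inequality of $F$ to hold for all $n\ge1$ at the cost of shrinking the rational $\epsilon$ in your countable union. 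With that quantifier the contradiction applies to every $1\le n\le m_j$ and the remainder of the proof is correct. An equivalent fix is to freeze a threshold $N$, further stratify by a rational lower bound $\delta\le\min_{1\le n<N}d(T^nx,x)$, and insist that all radii in the Besicovitch cover are below $\delta/2$.
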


Boshernitzan's result can be reformulated as: for $ \mu $-almost every $ x $, there is a constant $ c(x)>0 $ such that
\[d(T^nx,x)<c(x)n^{-1/\alpha}\quad\text{ for infinitely many }n\in\N.\]
This then leads us to study the recurrence set
\begin{equation}\label{eq:recurrence set ball}
	\rc(\{r_n\}):=\{x\in X: d(T^nx,x)<r_n\text{ for infinitely many } n\in\N\},
\end{equation}
where $ \{r_n\} $ is a sequence of non-negative real numbers. One asks how large the size of $ \rc(\{r_n\}) $ is in the sense of measure and in the sense of Hausdorff dimension. A simple but important observation, which will be central to most of what follows, is that $ \rc(\{r_n\}) $ and other related sets that we are interested in can be expressed as $ \limsup $ sets, i.e.
\begin{equation*}\label{eq:expressed as limsup set}
	\rc(\{r_n\})=\bigcap_{N=1}\bigcup_{n\ge N}\{x\in X: d(T^nx,x)<r_n\}.
\end{equation*}

For the size of $ \rc(\{r_n\}) $ in measure, the mainstream has been to obtain the $\mu$-measure of $\rc(\{r_n\}) $, where $ \mu $ is an (absolutely continuous) invariant measure. Specifically, one may expect that $\mu(\rc(\{r_n\})) $ obeys a zero-one law according as the convergence or divergence of the series $ \sum_{n=1}^{\infty}r_n^{\hdim X} $, where $ \hdim $ stands for the Hausdorff dimension. Such a zero-one law, also referred to as dynamical Borel--Cantelli lemma, has been verified in various setups. For example, the case where $ X $ is a homogeneous self-similar set satisfying strong separation condition was investigated by Chang, Wu and Wu \cite{CWW19}. Their result was subsequently generalized by Baker and Farmer \cite{BakerFarmer21} to finite conformal iterated function systems satisfying the open set condition, and further, by Hussain, Li, Simmons and Wang \cite{HLSW22} to more general conformal and expanding dynamical systems. Later on, the results presented in \cite{HLSW22} were refined by Kleinbock and Zheng \cite{KleinbockZheng22}. It should be noticed that all the results mentioned above are only applicable to conformal dynamical systems. The non-conformal case where $ T $ is an expanding integer matrix transformation was recently studied by Kirsebom, Kunde and Persson \cite{KKP21}.  On the other hand, Allen, Baker and B\'ar\'any \cite{ABB22} studied the recurrence rates for shifts of finite type that hold $\mu$-almost surely with respect to a Gibbs measure $ \mu $, and presented a nearly complete description of the $\mu$-measure of the recurrence set.

For the size of $ \rc(\{r_n\}) $ in Hausdorff dimension, it was first studied by Tan and Wang \cite{TanWang11} when the underlying dynamical system is the $\beta$-transformation. Since their initial work on the Hausdorff dimension, many other cases have also been addressed. We can refer to \cite{SeWa15} for conformal iterated function systems,  to \cite{YuLi23} for expanding Markov maps,  to \cite{WuYu22} for certain self-affine maps, to \cite{HuPe23} for the cat map, and to \cite{YuWa23} for higher dimensional $\beta$-transformation.

The studies mentioned above have a deep connection with the {\em shrinking target problems}. Let $ \{S_n\} $ be a sequence of subsets of $ X $. The shrinking target problems concern the measure and the Hausdorff dimension of the set
\begin{equation}\label{eq:shrinking targets}
	\{x\in X: T^nx\in S_n\text{ for infinitely many } n\in\N\}=\bigcap_{N=1}\bigcup_{n\ge N}T^{-n}S_n.
\end{equation}
For more details on shrinking target problems we refer the reader to \cite{AlBa21,BarRa18,ChKl01,FMP07,GaKim07,HiVe95,HiVe99,LLVZ22,LWWW14,Lopez20,WaZh21} and references therein. Of particular interest to us is the recent work by Li, Liao, Velani and Zorin \cite{LLVZ22} in which the shrinking target problems for the matrix transformations of tori were fully studied. In their paper \cite{LLVZ22}, the authors allowed the shrinking targets $ S_n $ to be hyperrectangles or hyperboloids and gave some general conditions for a real, non-singular matrix transformation to guarantee that the $ d $-dimensional Lebesgue measure of the shrinking target set obeys a zero-one law. In the case that $ S_n $ are hyperrectangles with sides parallel to the axes, they also determined the Hausdorff dimension of the corresponding shrinking targets set for diagonal matrix transformations.

For the quantitative recurrence theory, little is known for the matrix transformation, except a recent work by Kirsebom, Kunde and Persson \cite{KKP21} in which a criterion for determining the measure of the recurrence set was provided. However, the result of \cite{KKP21} is valid only for expanding integer matrix transformation but not for expanding real matrix transformation. The main purpose of this paper is to investigate the more general setup in which $ T $ is a piecewise expanding map (including expanding real matrix transformation) and the `targets' are either hyperrectangles or hyperboloids.

We start with introducing some necessary notations and definitions. In the current work, we take $ X $ to be the unit cube $ [0,1]^d $ endowed with the maximum norm $ |\cdot| $, i.e.\,for any $ \bx=(x_1,\dots, x_d)\in [0,1]^d $, $ |\bx|=\max\{|x_1|,\dots,|x_d|\} $. Throughout this paper, we adhere to the following notation. The closure, boundary, $\varepsilon$-neighbourhood, diameter, and cardinal number of $ A $ will be denoted by $ \overline A $, $ \partial A $, $ A(\varepsilon) $, $ |A| $ and $  \#  A $, respectively. The notations $ \lm^d $ and $ \lm^{d-1} $ stand for the Lebesgue measures of dimensions $ d $ and $ d-1 $, respectively. The $ (d-1) $-dimensional upper and lower Minkowski contents are defined, respectively as
\[\mc(A):=\limsup_{\epsilon\to 0^+}\frac{\lm^d(A(\epsilon))}{\epsilon}\quad\text{and}\quad\mcl(A):=\liminf_{\epsilon\to 0^+}\frac{\lm^d(A(\epsilon))}{\epsilon}.\]
If the $ (d-1) $-dimensional upper and lower Minkowski contents are equal, then their common value is called the $ (d-1) $-dimensional Minkowski content of $ A $ and is denoted by $ M^{d-1}(A) $. Note that if $ A $ is a closed $ (d-1) $-rectifiable subset of $ \R^d $, i.e.\,the image of a
bounded set from $ \R^{d-1} $ under a Lipschitz function, then $ M^{d-1}(A) $ exists and equals to $ \lm^{d-1}(A) $ multiplied by a constant. For further details, see \cite{Federer,Krantz} and references within.

Let us now specify the class of measure-preserving systems $ ([0,1]^d,T,\mu) $ which we can treat by our techniques.

\begin{defn}[Piecewise expanding map]\label{d:C1 map}
	We say that $ T\colon[0,1]^d\to [0,1]^d $ is a piecewise expanding map if there is a finite family $ (U_i)_{i=1}^P $ of pairwise disjoint connected open subsets in $ [0,1]^d $ with $ \bigcup_{i=1}^P\overline{U_i}=[0,1]^d $ such that the following statements hold.
	\begin{enumerate}[\upshape(i)]
		\item The map $ T $ is injective and can be extended to a $ C^1 $ map on each $ \overline U_i $. Moreover, there exists a constant $ L>1 $ such that
		\begin{equation}\label{eq:expanding constant}
			\inf_i\inf_{\bx\in U_i} \|D_\bx T\|\ge L,
		\end{equation}
		where $ D_\bx T $ is the differential of $ T $ at $ \bx $ determined by
		\[\lim_{|\bz|\to 0}\frac{|T(\bx+\bz)-T(\bx)-(D_{\bx}T)(\bz)|_2}{|\bz|_2}=0\]
		and $ \|D_\bx T\|:=\sup_{\bz\in\R^d}|(D_\bx T)(\bz)|/|\bz| $. Here $ |\bz|_2=\sqrt{z_1^2+\cdots+z_d^2} $.
	\item The boundary of $ U_i $ is included in a $ C^1 $ piecewise embedded compact submanifold of codimension one. In particular, there exists a constant $ K $ such that
	\[\max_{1\le i\le P}\mc(\partial U_i)\le K.\]
	\end{enumerate}
\end{defn}

\begin{rem}\label{r:1}
	Define
	\[\mathcal F_n=\{U_{i_0}\cap T^{-1}U_{i_1}\cap\cdots\cap T^{-(n-1)}U_{i_{n-1}}:1\le i_0,i_1,\dots,i_{n-1}\le P\}.\]
	In Definition \ref{d:C1 map}, the smoothness assumptions on the map $ T $ and the boundaries of $ (U_i)_{i=1}^P $ are to facilitate the estimation of $ \mc(\partial J_n) $ for all $ J_n\in\mathcal F_n $ (see Lemma \ref{l:boundary estimation}).
\end{rem}
Throughout, we will always assume that $ \mu $ is an absolutely continuous (with respect to the Lebesgue measure $ \mathcal L^d $) $ T $-invariant probability measure. Under this assumption, we see that
\[\mu\bigg(\bigcup_{1\le i\le P}U_i \bigg)=1.\]
We will need the following definition taken from \cite{LLVZ22}, which is a variation of \cite[Theorem 6.1]{Saussol00}.
\begin{defn}[Exponential mixing]\label{d:exponential mixing}
	For any collection $ \mathcal C $ of measurable subsets $ F $ of $ [0,1]^d $ satisfying
	\[\sup_{F\in\mathcal C}\mc(\partial F)<\infty,\]
	there exist constants $ c>0 $ and $ \tau>0 $ such that for any $ F, G\in\mathcal C $, we have
	\begin{equation}\label{eq:expoential decay definition}
		|\mu(F\cap T^{-n}G)-\mu(F)\mu(G)|\le c\mu(G)e^{-\tau n}.
	\end{equation}
\end{defn}

We consider two special families of sets that related to the setups of the classical theory of Diophantine approximation. For any $ \bx\in [0,1]^d $, $ \br\in(\R_{\ge 0})^d $ and $ \delta>0 $, let
\[R(\bx,\br):=\prod_{i=1}^{d}B(x_i,r_i)\]
and
\[H(\bx,\delta):=\bx+\{\bz\in [-1,1]^d: |z_1\cdots z_d|<\delta\}.\]
The sets $ R(\bx,\br) $ and $ H(\bx,\delta) $ are usually called targets. Clearly, $ R(\bx,\br) $ is a hyperrectangle with sides parallel to the axes and $ H(\bx,\delta) $ is a hyperboloid. Let $ \{\br_n\} $ be a sequence of vectors with $ \br_n=(r_{n,1},\dots,r_{n,d})\in(\R_{\ge 0})^d $ and
\begin{equation}\label{eq:rn to 0}
	\lim_{n\to\infty}|\br_n|=0.
\end{equation}
Define
\begin{equation}\label{eq:rectangle}
	\rc(\{\br_n\})=\bigcap_{N=1}\bigcup_{n\ge N}\{\bx\in [0,1]^d:T^n\bx\in R(\bx,\br_n)\}.
\end{equation}
If all the entries of $ \br_n $ coincide, that is $ r_{n,1}=\cdots=r_{n,d}=r_n $, then $ \rc(\{\br_n\}) $ is nothing but $\rc(\{r_n\}) $ defined in \eqref{eq:recurrence set ball}.
Let $ \{\delta_n\} $ be a sequence of non-negative real numbers such that
\begin{equation}\label{eq:deltan to 0}
	\lim_{n\to\infty}\delta_n=0.
\end{equation}
Define
\begin{equation}\label{eq:hyperbolic}
	\rc^\times (\{\delta_n\})=\bigcap_{N=1}\bigcup_{n\ge N}\{\bx\in[0,1]^d:T^n\bx\in H(\bx,\delta_n)\}.
\end{equation}
We remark that we do not assume that the sequences $ \{\br_n\} $ and $ \{\delta_n\} $ are non-increasing.
The definitions of $ \rc(\{\br_n\}) $ and $ \rc^\times (\{\delta_n\}) $ are motivated respectively by the weighted and multiplicative theories of classical Diophantine approximation. To illustrate this, let $ T $ be a $ d\times d $ non-singular matrix with real coefficients. Then, $ T $ determines a self-map on $ [0,1)^d $; namely, it sends $ \bx\in[0,1)^d $ to $ T\bx\mod 1 $. Here and below, when $ T $ is a matrix, the notation $ T $ will denote both the matrix and the transformation. Suppose that $ T=\mathrm{diag}(\beta_1,\dots,\beta_d) $ is a diagonal matrix with $ |\beta_i|>1 $. Then, for any $ \bx\in [0,1]^d $, we have
\begin{equation}\label{eq:recurrence rectangle}
	T^n\bx\in R(\bx,\br_n)\quad\Longleftrightarrow\quad |T_{\beta_i}x_i-x_i|<r_{n,i}\quad\text{for all }1\le i\le d,
\end{equation}
and
\[T^n\bx\in H(\bx, \delta_n)\quad\Longleftrightarrow\quad \prod_{i=1}^{d}|T_{\beta_i}x_i-x_i|<\delta_n, \]
where $ T_{\beta_i} $ is the standard $ \beta $-transformation with $ \beta=\beta_i $ given by
\[T_\beta x=\beta x-\lfloor\beta x\rfloor.\]
Here $ \lfloor\cdot\rfloor $ denotes the integer part of a real number.

Our main results are stated below.
\begin{thm}\label{t:main}
	Let $ T\colon[0,1]^d\to[0,1]^d $ be a piecewise expanding map. Suppose that $ \mu $ is exponential mixing and the density $ h $ of $ \mu $ belongs to $ L^q(\lm^d) $ for some $ q > 1 $. Then,
\[\mu(\rc(\{\br_n\}))=\begin{cases}
	0\quad&\text{if }\sum_{n=1}^{\infty}r_{n,1}\cdots r_{n,d}<\infty, \\
	1\quad&\text{if }\sum_{n=1}^{\infty}r_{n,1}\cdots r_{n,d}=\infty.
\end{cases}\]
\end{thm}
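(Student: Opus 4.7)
My plan is to express $\rc(\{\br_n\})=\limsup_n E_n$ with $E_n:=\{\bx\in[0,1]^d:T^n\bx\in R(\bx,\br_n)\}$, establish two-sided estimates $\mu(E_n)\asymp\prod_{i=1}^d r_{n,i}$ together with a quasi-independence bound for $\mu(E_m\cap E_n)$, and then apply the convergent Borel-Cantelli lemma on one side and a divergent Kochen-Stone/Sprind\v{z}uk style Borel-Cantelli lemma on the other.

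The central device is a discretization that removes the $\bx$-dependence of the target $R(\bx,\br_n)$. I partition $[0,1]^d$ into cubes $Q$ of side $\epsilon_n$ with reference points $y_Q\in Q$ and use the sandwich
\[
R(y_Q,\br_n-\epsilon_n\mathbf 1)\subset R(\bx,\br_n)\subset R(y_Q,\br_n+\epsilon_n\mathbf 1)\qquad(\bx\in Q).
\]
The cubes $Q$ and the hyperrectangles $R(y_Q,\br_n\pm\epsilon_n\mathbf 1)$ belong to a common class with uniformly bounded $(d-1)$-Minkowski content, so Definition \ref{d:exponential mixing} delivers $\mu(Q\cap T^{-n}R)=\mu(Q)\mu(R)+O(\mu(R)e^{-\tau n})$. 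Summing over $Q$, using the $L^q$ bound $\mu(R)\le\|h\|_q\lm^d(R)^{1-1/q}$ to absorb the factor $\epsilon_n^{-d}$ arising from the number of cells, and calibrating $\epsilon_n$ so that $\prod_i(r_{n,i}\pm\epsilon_n)\asymp\prod_i r_{n,i}$ while $\epsilon_n^{-d}e^{-\tau n}$ remains summable, produces $\mu(E_n)\asymp\prod r_{n,i}$; the convergence direction is then immediate from the classical Borel-Cantelli lemma.

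For the divergence direction the same sandwich gives $E_m\cap E_n\cap Q\subset Q\cap T^{-m}(R_m\cap T^{-(n-m)}R_n)$ with hyperrectangles $R_m,R_n$ centered at $y_Q$. Applying exponential mixing first across the gap $n-m$ (to the inner pair $R_m,R_n$) and then across $m$ (to $Q$ against the resulting set) separates the three factors and yields $\mu(E_m\cap E_n)\le \mu(E_m)\mu(E_n)+\Psi(m,n)$ with $\Psi$ exponentially small in both $n-m$ and $m$. Under $\sum\prod r_{n,i}=\infty$ the cross-term sum over $m<n\le N$ is dominated by $(\sum_{n\le N}\mu(E_n))^2$, so the divergent Borel-Cantelli lemma yields $\mu(\rc)=1$.

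The principal obstacle is calibrating $\epsilon_n$ against potentially highly anisotropic vectors $\br_n$. If some coordinate $r_{n,i}$ is much smaller than the $\epsilon_n$ demanded by the mixing error, the multiplicative sandwich $\prod_i(r_{n,i}\pm\epsilon_n)\asymp\prod_i r_{n,i}$ collapses. I will handle this by replacing the isotropic cubic partition with an anisotropic one of side lengths $(\epsilon_{n,1},\dots,\epsilon_{n,d})$ tailored to $\br_n$, or by splitting the index set into regimes where $\min_i r_{n,i}$ dominates versus is dominated by the chosen scale and treating each separately. A secondary technical point is to verify that the sets $Q\cap T^{-m}R_m$ arising at the second application of mixing retain uniformly bounded $(d-1)$-Minkowski content, which is exactly what the regularity hypotheses in Definition \ref{d:C1 map} and the cell-boundary bound of Remark \ref{r:1} are designed to provide.
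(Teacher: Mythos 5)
Your plan captures some genuine structural features of the argument---the local sandwich replacing the moving target $R(\bx,\br_n)$ by a fixed rectangle centered at a cell reference point, the application of exponential mixing cell by cell, and the eventual Paley--Zygmund/divergent Borel--Cantelli conclusion---but it has a gap that is fatal in both directions of the proof: the assertion $\mu(E_n)\asymp\prod_i r_{n,i}$ is \emph{not true} under the hypothesis $h\in L^q$, $q>1$. The density $h$ may be unbounded, so after summing the mixing estimate over cells you obtain at best
\[
\mu(E_n)\ \lesssim\ \sum_Q\mu(Q)\,\mu\big(R(y_Q,\br_n+\epsilon_n\mathbf 1)\big)\ \approx\ \int\mu\big(R(\bx,\br_n)\big)\,d\mu(\bx),
\]
and H\"older applied to the inner integral yields the bound $\|h\|_q\,(\prod_i r_{n,i})^{1-1/q}$, \emph{not} $\prod_i r_{n,i}$. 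Summability of $(\prod_i r_{n,i})^{1-1/q}$ does not follow from summability of $\prod_i r_{n,i}$, so your convergence argument does not close, and the lower bound $\mu(E_n)\gtrsim\prod_i r_{n,i}$ likewise fails because $h$ may be locally small. The paper repairs this in two different ways. For convergence it invokes the Zygmund differentiation theorem (Theorem \ref{t:differentiation theorem}) to decompose $[0,1]^d$ up to measure zero into sets $Z(k,l)$ on which $\mu(R(\bx,2\br_n))\le k\prod_i r_{n,i}$ for $n\ge l$, and then runs a Besicovitch-type cover of $Z(k,l)$ by $\lesssim(\prod_i r_{n,i})^{-1}$ rectangles with centers \emph{in} $Z(k,l)$; the density control along the cover centers replaces the global $\mu(E_n)\lesssim\prod_i r_{n,i}$ you wanted. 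For divergence it replaces $E_n$ by the auxiliary sets $\hat E_n$ (Section \ref{s:auxiliary sets}), in which the rectangle at each $\bx$ is rescaled to $R(\bx,\xi_n(\bx))$ with $\mu(R(\bx,\xi_n(\bx)))=\prod_i r_{n,i}$ by fiat; this uniformizes the measures regardless of $h$, and the passage back from $\hat\rc(\{\br_n\})$ to $\rc(\{\br_n\})$ again uses Zygmund's theorem together with an extra rescaling $\tilde\br_n=a_n^{-1}\br_n$. None of this can be bypassed by calibrating $\epsilon_n$; the obstruction is the unbounded density, not the anisotropy of $\br_n$.

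There are two further gaps in the divergence step as you sketch it. First, you state that the correlation error $\Psi(m,n)$ is ``exponentially small in both $n-m$ and $m$'' and that the cross term is dominated by $(\sum_{n\le N}\mu(E_n))^2$. This fails in the regime of small $m$: the fine discretization needed to keep the sandwich tight forces a factor of order $\epsilon^{-d}$ in the error, and one cannot simultaneously have $\epsilon$ comparable to the target scale and $\epsilon^{-d}e^{-\tau m}$ small when $m$ grows like $\log n$. The paper therefore splits $m$ into the ranges $m\ge 8d^2\log n/(s\tau)$ (Lemma \ref{l:estimations on correlations 1}) and $m<8d^2\log n/(s\tau)$ (Lemmas \ref{l:measure of intersection} and \ref{l:estimations on correlations 2}), with a genuinely different argument---an extra change of discretization scale from $n$ to $m$ and a careful count of the cylinders $\#\mathcal F_m\le P^m$---for the small-$m$ case. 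Second, your second application of mixing, ``$Q$ against the resulting set,'' requires the \emph{target} $R_m\cap T^{-(n-m)}R_n$ to lie in a class with uniformly bounded Minkowski content of the boundary, and it does not (its boundary grows with $n-m$); the set you flagged, $Q\cap T^{-m}R_m$, is not the one that needs controlling. The paper circumvents this by inserting the cylinder decomposition over $J_{n-m}\in\mathcal F_{n-m}$ inside the $T^{-m}$ before applying mixing, so that each piece $J_{n-m}\cap R_m\cap T^{-(n-m)}R_n$ is $\pro$-admissible by Lemma \ref{l:boundary estimation}, and the errors then sum correctly because they are proportional to $\mu(R_n)$.
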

\begin{thm}\label{t:main s}
	Let $ T\colon[0,1]^d\to[0,1]^d $ be a piecewise expanding map. Suppose that $ \mu $ is exponential mixing, and that there exists an open set $ V $ with $ \mu(V)=1 $ such that the density $ h $ of $ \mu $, when restricted on $ V $, is bounded from above by $ \mathfrak{c}\ge 1 $ and bounded from below by $ \mathfrak{c}^{-1} $. Then,
	\[\mu(\rc^\times(\{\delta_n\}))=\begin{cases}
		0\quad&\text{if }\sum_{n=1}^{\infty}\delta_n(-\log \delta_n)^{d-1}<\infty, \\
		1\quad&\text{if }\sum_{n=1}^{\infty}\delta_n(-\log \delta_n)^{d-1}=\infty.
	\end{cases}\]
\end{thm}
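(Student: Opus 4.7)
The plan is to adapt the partition-and-mixing machinery underlying the proof of Theorem \ref{t:main} to the hyperboloid targets, via a dyadic decomposition of $H(0,\delta_n)$. Concretely, for each $n$ cover $H(0,\delta_n)$ by an essentially disjoint union of $O((-\log\delta_n)^{d-1})$ axis-aligned dyadic rectangles $R_n^{(j)}=\prod_{i=1}^d[-2^{-k_{n,j,i}},2^{-k_{n,j,i}}]$ with $\sum_i k_{n,j,i}\asymp -\log_2\delta_n$ and $\lm^d(R_n^{(j)})\asymp\delta_n$; this reflects the classical volume estimate $\lm^d(H(0,\delta))\asymp\delta(-\log\delta)^{d-1}$. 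The two-sided density bound $\fc^{-1}\le h\le \fc$ on the open full-measure set $V$ lets us pass freely between $\lm^d$ and $\mu$ up to constants.

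\textbf{Convergence.} Set $A_n:=\{\bx\in[0,1]^d:T^n\bx-\bx\in H(0,\delta_n)\}$. From $A_n\subseteq \bigcup_j\{\bx:T^n\bx-\bx\in R_n^{(j)}\}$ and the single-rectangle upper bound extracted from the convergent half of Theorem \ref{t:main} (partition $[0,1]^d$ into cells matched to the aspect ratio of $R_n^{(j)}$ and apply \eqref{eq:expoential decay definition}), one obtains $\mu\{\bx:T^n\bx-\bx\in R_n^{(j)}\}\lesssim \delta_n$ uniformly in $j$. Summing over the $O((-\log\delta_n)^{d-1})$ rectangles gives $\mu(A_n)\lesssim\delta_n(-\log\delta_n)^{d-1}$, and the first Borel--Cantelli lemma delivers $\mu(\rc^\times(\{\delta_n\}))=0$ under the convergence hypothesis.

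\textbf{Divergence and main obstacle.} For divergence I would apply the Chung--Erd\H{o}s inequality
\[\mu\bigl(\limsup_{n\to\infty} A_n\bigr)\ge\limsup_N \frac{\bigl(\sum_{n\le N}\mu(A_n)\bigr)^2}{\sum_{m,n\le N}\mu(A_m\cap A_n)}.\]
The matching lower bound $\mu(A_n)\gtrsim\delta_n(-\log\delta_n)^{d-1}$ comes from the lower density bound and the Lebesgue estimate for $H(0,\delta_n)$, so the numerator diverges. Note that the naive reduction that picks one rectangle per $n$ is insufficient: if $\delta_n=1/(n(\log n)^d)$ then $\sum \delta_n(-\log\delta_n)^{d-1}=\infty$ while $\sum\delta_n<\infty$, so the logarithmic factor from the number of rectangles is essential. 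The core task is therefore the pair-correlation bound: for $m<n$, decompose $A_m\cap A_n$ over a partition of $[0,1]^d$ into small cubes and over the dyadic rectangle collections indexing $A_m$ and $A_n$, apply the exponential mixing \eqref{eq:expoential decay definition} to each cross term, and verify $\mu(A_m\cap A_n)\le\mu(A_m)\mu(A_n)+\text{(error)}$ with total error summable against $\bigl(\sum_{n\le N}\mu(A_n)\bigr)^2$; a standard $0/1$-law argument using the invariance of $\mu$ then promotes positive measure to full measure. The principal technical difficulty is the uniformity of the mixing constants across the full collection of auxiliary sets. The dyadic rectangles $R_n^{(j)}$ can have arbitrarily eccentric aspect ratios, forcing partitions of $[0,1]^d$ into cells tailored to those aspect ratios; to apply Definition \ref{d:exponential mixing} with common constants $c,\tau$ one must verify $\sup_{F\in\mathcal{C}}\mc(\partial F)<\infty$ for the whole family $\mathcal{C}$ of cells and their thickenings appearing in the pair-correlation estimate, and this boundary-regularity check is where the bulk of the work lies.
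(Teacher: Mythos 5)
Your dyadic decomposition of $H(0,\delta_n)$ into $O((-\log\delta_n)^{d-1})$ axis-aligned rectangles is a genuinely different route from the paper's. The paper treats the hyperboloid as a single geometric object: it uses the explicit volume formula $\lm^d(B(\bx,r)\cap H(\bx,\delta))=2^d\delta\sum_{t=0}^{d-1}\frac{1}{t!}\bigl(\log\frac{r^d}{\delta}\bigr)^t$ together with a uniform bound $\mc(\partial H(\bx,\delta))\le K_1$ on the Minkowski content of hyperboloid boundaries, encoded in the separate bounded property $\pros$. Your approach would trade this for a reduction to the rectangle machinery of Theorem \ref{t:main}, at the cost of bookkeeping over a double sum of cross terms in the pair correlations. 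Your observation that one cannot throw away the $(-\log\delta_n)^{d-1}$ rectangles and keep only one is correct and important.

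The obstacle you single out as primary, however, is not a real obstacle. Every rectangle $R\subset[0,1]^d$ has $\mc(\partial R)\le 2d$ regardless of aspect ratio, so your dyadic pieces and the sets $J_{n-m}\cap R_1\cap T^{-(n-m)}R_2$ appearing in the correlation estimate all lie in the fixed collection $\mathcal C_1$ satisfying $\pro$ (by Lemma \ref{l:boundary estimation}); the mixing constants $c,\tau$ are therefore fixed once and for all, with no aspect-ratio dependence. The actual difficulties lie elsewhere, and your proposal does not address them. First, the appeal to a ``standard $0/1$-law argument using the invariance of $\mu$'' is where the argument breaks: the recurrence events $D_n=\{\bx:T^n\bx\in H(\bx,\delta_n)\}$ are not of the form $T^{-n}S_n$, so no tail $\sigma$-algebra argument is available --- the paper stresses this point explicitly. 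The paper's route instead proves a localized positive-measure estimate $\mu(B(\bx,r)\cap\rc^\times(\{\delta_n\}))\ge\alpha_2\mu(B(\bx,r))$ for centers $\bx\in V_\delta$ and radii $r<\delta/2$ (Lemma \ref{l:modulo}), which requires running the mean and correlation estimates (Lemmas \ref{l:measure of hat Dn}, \ref{l:estimations on correlations s1}, \ref{l:estimations on correlations s2}) relative to an arbitrary ball $B$, and then invokes Lebesgue density. Your global Chung--Erd\H{o}s yields only positive measure and cannot be promoted without this localization. Second, you do not handle the small-$m$ regime in the pair correlations: when $m$ is as small as $O(\log n)$, applying mixing with gap $m$ yields an error $ce^{-\tau m}$ that is not small enough; the paper's fix (Lemma \ref{l:measure of intersection s}) decomposes over the finite partition $\mathcal F_m$ and applies mixing with gap $n$ from the other side, paying a factor $\#\mathcal F_m\le P^m$ that $e^{-\tau n}$ absorbs. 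Without this step the Chung--Erd\H{o}s denominator is not controlled.
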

\begin{rem}
	As mentioned in \cite[Remark 1.8]{HLSW22}, for the shrinking target problems, the mixing property and the invariance property of $ \mu $ can be applied directly to verify the quasi-independence of the events $ \{T^{-n}S_n\} $ in \eqref{eq:shrinking targets}. While for the recurrence theory, the events in \eqref{eq:rectangle} and \eqref{eq:hyperbolic} cannot be expressed as the $ T $-inverse images of some sets, which makes the proofs of zero-one laws more involved. We will apply some of the ideas from \cite{HLSW22,KKP21}, but the proofs are quite different due to the new setup.
\end{rem}
\begin{rem}
	The assumptions in Theorem \ref{t:main s} are stronger than those in Theorem \ref{t:main}. According to our method, the shapes of targets do play a role in the proof. When the targets are hyperrectangles with sides parallel to the axes, the Zygmund differentiation theorem (see Theorem \ref{t:differentiation theorem}) is applicable, and Theorem \ref{t:main} can be established under a weaker assumption on $ \mu $. However, for the hyperboloid setup, to the best of our knowledge, there does not exist an analogue differentiation theorem. Therefore, we require a stronger condition on $ \mu $ to establish Theorem \ref{t:main s}.
\end{rem}
\begin{rem}
	Our method in proving Theorem \ref{t:main s} also allows us to study some more general settings.  More precisely, the targets $ \{H_n\} $ can be chosen as a sequence of parallelepipeds or ellipsoids centered at the origin such that $ \lim_{n\to\infty} |H_n|=0 $. We stress that when $ H_n $ is a hyperrectangle, the sides of $ H_n $ are not required to be parallel to the axes.  By making some obvious modifications to the proof of Theorem \ref{t:main s}, one can show that the $ \mu $-measure of the set of points $ \bx $ satisfying $ T^n\bx\in \bx+H_n $ for infinitely many $ n\in\N $, is zero or one according as the convergence or divergence of the volume sum $ \sum_{n\ge 1}\lm^{d}(H_n) $. However, the proofs will be a bit lengthy without yielding interesting applications, and thus will not be presented in this paper.
\end{rem}
\begin{rem}
	Let us focus on the one dimensional case for a moment. In this case, the two sets $ \rc(\{\br_n\}) $ and $ \rc^\times (\{\delta_n\}) $ are identical, and we write $ \rc(\{r_n\}) $ in place of $ \rc(\{\br_n\}) $ or $ \rc^\times (\{\delta_n\}) $. Since the geometry of $ [0,1] $ is relatively simple, Theorem \ref{t:main} still holds if the condition stated in Definition \ref{d:C1 map} (i) is relaxed to that $ T $ is strictly monotonic and continuous on each $ U_i $. The detailed discussion will be given in Lemma \ref{l:boundary estimation} and Remark \ref{r: relax condition on T}. In  \cite{KKP21}, Kirsebom, Kunde and Persson proved that under some conditions similar to Theorem \ref{t:main} but without assuming that $ T $ is piecewise expanding,
	\[\sum_{n\ge 1}\int\mu(B(x,r_n))\,d\mu(x)<+\infty\quad\Longrightarrow\quad \mu(\rc(\{r_n\}))=0.\]
	They also posed a question of whether the complementary divergence statement
	holds. Our result indicates that this is not always the case. If one adds an extra assumption to \cite[Theorem C]{KKP21} that $ T $ is a piecewise monotone continuous function, then the $\mu$-measure of $ \rc(\{r_n\}) $ is determined by the sum $ \sum_{n\ge 1}r_n $ rather than $ \sum_{n\ge 1}\int\mu(B(x,r_n))\,d\mu(x) $. It is easily verified that
	\[\sum_{n\ge 1}\int\mu(B(x,r_n))\,d\mu(x)<+\infty\quad\Longrightarrow\quad \sum_{n\ge 1}r_n<\infty.\]
	However, the reverse implication is unclear.
\end{rem}

As an immediate consequence of Theorem \ref{t:main}, we can extend the result of \cite[Theorem D]{KKP21} to some more general settings.

\begin{thm}\label{t:sub}
	Let $ T $ be a $ d\times d $ real matrix with the modulus of all eigenvalues strictly larger than $ 1 $. Suppose that $ T $ satisfies one of the following conditions.
	\begin{enumerate}
		\item All eigenvalues of $ T $ are of modulus strictly larger than $ 1+\sqrt d $.
		\item $ T $ is diagonal.
		\item $ T $ is an integer matrix.
	\end{enumerate}
	Then,
	\[\mu(\rc(\{\br_n\}))=\begin{cases}
		0\quad&\text{if }\sum_{n=1}^{\infty}r_{n,1}\cdots r_{n,d}<\infty, \\
		1\quad&\text{if }\sum_{n=1}^{\infty}r_{n,1}\cdots r_{n,d}=\infty.
	\end{cases}\]
\end{thm}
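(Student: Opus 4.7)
The plan is to derive Theorem~\ref{t:sub} as a direct corollary of Theorem~\ref{t:main}: in each of the three cases (1)--(3), I would verify the three hypotheses of the main theorem, namely that $T\bmod 1$ is a piecewise expanding map in the sense of Definition~\ref{d:C1 map}, that it admits an absolutely continuous invariant probability measure $\mu$ with density $h\in L^q(\lm^d)$ for some $q>1$, and that $\mu$ is exponentially mixing in the sense of Definition~\ref{d:exponential mixing}. Once these three inputs are in place, the zero-one dichotomy for $\rc(\{\br_n\})$ follows immediately.

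For the piecewise-expanding structure, I would take $(U_i)$ to be the connected components of the preimages of the open cube $(0,1)^d$ under the mod-$1$ reduction. Since $T$ is affine, each $U_i$ is a convex polytope, and $\partial U_i$ lies in finitely many hyperplanes and so automatically satisfies the Minkowski-content bound in Definition~\ref{d:C1 map}(ii). The differential $D_\bx T$ is constant equal to $T$, so the expansion condition \eqref{eq:expanding constant} reduces to a lower bound $\|T\|\ge L>1$ in the max norm, which in all three cases follows from $\|T\|\ge\rho(T)$ together with the hypothesis that every eigenvalue of $T$ exceeds $1$ in modulus.

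The ACIM and its regularity are then handled case by case. In case~(3), the Lebesgue measure itself is $T$-invariant with $h\equiv 1\in L^\infty$. In case~(2), $T$ is a product of one-dimensional $\beta$-transformations, and the product of the corresponding Parry measures supplies an ACIM whose density is bounded above and below on a full-measure open set, so $h\in L^\infty$. In case~(1), the eigenvalue threshold $1+\sqrt d$ is precisely what places $T$ under the classical existence theorem for multidimensional piecewise expanding linear maps of Saussol type, yielding an ACIM whose density lies in a quasi-Hölder/bounded-variation class, hence in every $L^q$. Exponential mixing then follows from a spectral gap of the transfer operator on the appropriate Banach space: by classical Fourier analysis in case~(3), from the one-dimensional spectral gap combined with the product structure in case~(2), and from the quasi-Hölder framework in case~(1). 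In each case a routine check ensures that the resulting decay estimate holds for any family of test sets with uniformly bounded Minkowski-content boundaries, which is precisely what Definition~\ref{d:exponential mixing} requires.

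The main obstacle will be case~(1), where both the $L^q$-regularity of the density and the exponential-mixing estimate must be imported from Saussol's framework and adapted to our setting; in particular, one has to check that Saussol's mixing estimates, typically phrased for pairs of functions in a quasi-Hölder space, specialise to characteristic functions of sets whose boundaries have bounded Minkowski content, as required by Definition~\ref{d:exponential mixing}. Cases~(2) and~(3) are comparatively routine, with case~(3) being essentially the content of \cite{KKP21}. With all three hypotheses in hand, an application of Theorem~\ref{t:main} yields Theorem~\ref{t:sub}.
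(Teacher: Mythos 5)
Your proposal is essentially the paper's proof. The paper also derives Theorem~\ref{t:sub} as an immediate corollary of Theorem~\ref{t:main}, and it discharges the hypotheses by citing \cite[Proposition 1]{LLVZ22} together with Saussol's results \cite[Theorem 6.1, Proposition 3.4, Theorem 5.1(ii)]{Saussol00}: in each of the three cases the absolutely continuous invariant measure exists, is exponentially mixing with respect to test sets of bounded Minkowski-content boundary, and has density bounded above (hence in $L^q$ for every $q$). You are slightly more explicit than the paper about the piecewise-expanding structure and about where the ACIM comes from in cases (2) and (3) (product of Parry measures, respectively Lebesgue measure), but the skeleton — verify the three hypotheses of Theorem~\ref{t:main}, then apply it — is identical, and the heavy lifting in case (1) is outsourced to Saussol in both arguments. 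No genuine difference in approach.
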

\begin{rem}
	The conditions stated in items (1)--(3) are consistent with those stated in \cite[Theorems 3--5]{LLVZ22}, respectively. As pointed out in \cite[Proposition 1]{LLVZ22}, Saussol's result \cite[Theorem 6.1]{Saussol00} implies that the absolutely continuous invariant measure $ \mu $ is exponential mixing if $ T $ satisfies one of the conditions specified in Theorem \ref{t:sub}. Moreover, Saussal \cite[Proposition 3.4 and Theorem 5.1 (ii)]{Saussol00} proved that the density of $ \mu $ is bounded from above. Hence, Theorem \ref{t:main} applies.
\end{rem}

\begin{thm}\label{t:sub m}
	Let $ T $ be a $ d\times d $ real matrix with the modulus of all eigenvalues strictly larger than $ 1 $. Suppose that $ T $ satisfies one of the following conditions.
	\begin{enumerate}
		\item $ T $ is diagonal with all eigenvalues belonging to $ (-\infty, -(\sqrt 5+1)/2]\cup (1,+\infty) $,
		\item $ T $ is an integer matrix.
	\end{enumerate}
	Then,
	\[\mu(\rc^\times(\{\delta_n\}))=\begin{cases}
		0\quad&\text{if }\sum_{n=1}^{\infty}\delta_n(-\log \delta_n)^{d-1}<\infty, \\
		1\quad&\text{if }\sum_{n=1}^{\infty}\delta_n(-\log \delta_n)^{d-1}=\infty.
	\end{cases}\]
\end{thm}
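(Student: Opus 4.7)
The plan is to apply Theorem \ref{t:main s}, so the task reduces to verifying, in both settings (1) and (2), that $T$ (viewed modulo $1$) is piecewise expanding in the sense of Definition \ref{d:C1 map}, that the absolutely continuous invariant probability measure $\mu$ is exponential mixing, and that the density $h$ of $\mu$ is bounded between $\fc^{-1}$ and $\fc$ on an open set of full $\mu$-measure. The first two items are already recorded in the remark after Theorem \ref{t:sub}: piecewise expansion follows by partitioning $[0,1]^d$ via the preimages of $[0,1]^d$ under $T$ (the expansion constant $L$ in \eqref{eq:expanding constant} being supplied by the eigenvalue lower bound, and the axis-aligned or integer-lattice boundaries having uniformly bounded upper Minkowski content), while exponential mixing is given by Saussol's theorem \cite[Theorem 6.1]{Saussol00}. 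The only non-trivial task is therefore the two-sided density bound.

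For case (2), $T$ is an integer matrix, so the Lebesgue measure $\lm^d$ is $T$-invariant; hence $\mu = \lm^d$, $h \equiv 1$, and the two-sided bound holds trivially on $V = (0,1)^d$ with $\fc=1$.

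For case (1), writing $T = \mathrm{diag}(\beta_1,\ldots,\beta_d)$, the product structure of $T$ gives $\mu = \mu_1\otimes\cdots\otimes\mu_d$, where $\mu_i$ is the absolutely continuous invariant probability measure of the one-dimensional map $x \mapsto \beta_i x \bmod 1$. For $\beta_i > 1$, this is the classical Parry--R\'enyi measure, whose explicit series representation of the density $h_{\beta_i}$ exhibits a positive lower bound and a finite upper bound on the open set $(0,1)$, which carries full $\mu_i$-measure. For $\beta_i \le -(\sqrt 5+1)/2$, the corresponding one-dimensional invariant density (of Ito--Sadahiro type for negative $\beta$) is likewise bounded above and below on an open set $V_i$ of full $\mu_i$-measure; the threshold $(\sqrt 5+1)/2$ is precisely the golden-ratio critical value below which the positive lower bound is lost. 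Taking $V = V_1\times\cdots\times V_d$ and $\fc$ to be the product of the corresponding one-dimensional constants yields the required two-sided bound on $h = h_{\beta_1}\cdots h_{\beta_d}$ on an open set of full $\mu$-measure, and Theorem \ref{t:main s} applies. The one delicate point, and the reason the golden ratio is forced into the hypothesis, is ensuring that the invariant density for the negative-$\beta_i$ factors remains bounded away from zero.
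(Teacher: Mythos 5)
Your proposal is correct and takes essentially the same route as the paper. The paper's own ``proof'' is a single remark: it appeals to \cite[Proposition 2]{LLVZ22} for the two-sided density bound and then invokes Theorem \ref{t:main s}, with the exponential mixing and piecewise-expanding facts already supplied in the remark following Theorem \ref{t:sub}. You arrive at the same reduction but unpack the content of \cite[Proposition 2]{LLVZ22} yourself: in case (2) the integer matrix preserves Lebesgue measure so $h\equiv 1$, and in case (1) the product structure reduces everything to the one-dimensional Parry--R\'enyi density (for $\beta_i>1$) and the Ito--Sadahiro-type density (for $\beta_i\le -(\sqrt5+1)/2$), with the golden-ratio threshold being exactly what guarantees the lower bound for the negative factors. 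This is the same argument, just carried out explicitly rather than cited.
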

\begin{rem}
	If $ T $ satisfies one of the conditions in Theorem \ref{t:sub m}, then the density of the absolutely continuous invariant measure $ \mu $ is bounded from above and bounded away from zero. See \cite[Proposition 2]{LLVZ22} for more details. Hence, we can apply Theorem \ref{t:main s}.
\end{rem}

We also address the Hausdorff dimension of the recurrence set in the case that $ T $ is a diagonal matrix with the modulus of  all eigenvalues $ |\beta_1|,\dots,|\beta_d| $ strictly larger than 1. For $ 1\le i\le d $, let $ \psi_i:\R_{\ge 0}\to\R_{\ge 0} $ be a positive and non-increasing function. For convenience, let $ \Psi:=(\psi_1,\dots,\psi_d) $ and for $ n\in\N $ let $ \Psi(n):=(\psi_1(n),\dots,\psi_d(n)) $. Define
\[\rc(\Psi)=\{\bx\in[0,1]^d:|T^n_{\beta_i}x_i-x_i|<\psi_i(n)\ (1\le i\le d)\text{ for infinitely many }n\in\N\}.\]
By \eqref{eq:recurrence rectangle}, it is easily seen that $ \bx\in \rc(\Psi) $ if and only if $ T^n\bx\in R(\bx, \Psi(n)) $ for infinitely many $ n\in\N $. We stress that the targets $ R(\bx, \Psi(n)) $ are hyperrectangles whose sides are parallel to the axes.

It turns out that the Hausdorff dimension of $ \rc(\Psi) $ depends on the set $ \cu(\Psi) $ of accumulation points $ \bt=(t_1,\dots,t_d) $ of the sequence $ \big\{\big(-\frac{\log\psi_1(n)}{n},\dots,-\frac{\log\psi_d(n)}{n}\big)\big\}_{n\ge 1} $.
\begin{thm}\label{t:dimension}
	Let $ T $ be a real matrix transformation of $ [0,1]^d $. Suppose that $ T $ is diagonal with all eigenvalues $ \beta_1,\dots,\beta_d $ of modulus strictly larger than 1. For $ 1\le i\le d $, let $ \psi_i:\R_{\ge 0}\to\R_{\ge 0} $ be a positive and non-increasing function. Assume that $ \cu(\Psi) $ is bounded. Then,
	\begin{equation}\label{eq:dimrp}
		\hdim \rc(\Psi)=\sup_{\bt\in\cu(\Psi)}\min_{1\le i\le d}\theta_i(\bt),
	\end{equation}
	where
	\[\theta_i(\bt):=\sum_{k\in\ck_1(i)}1+\sum_{k\in\ck_2(i)}\Big(1-\frac{t_k}{\log|\beta_i|+t_i}\Big)+\sum_{k\in\ck_3(i)}\frac{\log|\beta_k|}{\log|\beta_i|+t_i}\]
	and, in turn
	\[\ck_1(i):=\{1\le k\le d:\log|\beta_k|>\log |\beta_i|+t_i\},\]
	\[\ck_2(i):=\{1\le k\le d:\log|\beta_k|+t_k\le \log|\beta_i|+t_i\}\]
	and
	\[\ck_3(i):=\{1,\dots,d\}\setminus(\ck_1(i)\cup\ck_2(i)).\]
\end{thm}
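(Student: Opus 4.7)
The proof will split into matching upper and lower bounds, both exploiting the product structure of the diagonal transformation $T$. Set $a_i := \log|\beta_i|$, and for each $n$ let $\bt_n = (-\log\psi_1(n)/n, \dots, -\log\psi_d(n)/n)$, so that $\cu(\Psi)$ is precisely the set of accumulation points of $(\bt_n)$. The basic observation is that on each $n$-th branch of $T_{\beta_i}^n$ the map is affine, $T_{\beta_i}^n x_i - x_i = (\beta_i^n - 1)x_i - j$ for an integer $j$, so the recurrence condition $|T_{\beta_i}^n x_i - x_i| < \psi_i(n)$ cuts out an interval of length $\asymp \psi_i(n)|\beta_i|^{-n} \asymp e^{-n(a_i + t_{n,i})}$. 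Hence, writing $A_n = \{\bx : T^n\bx \in R(\bx,\Psi(n))\}$, one has $\rc(\Psi) = \limsup_n A_n$, and $A_n$ is a disjoint product array of $\prod_i |\beta_i|^n$ axis-aligned rectangles with sides $\ell_{n,i} \asymp e^{-n(a_i + t_{n,i})}$.

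\textbf{Upper bound.} For each $n$, the plan is to choose the index $i_n$ minimising $\theta_i(\bt_n)$ and cover $A_n$ by cubes of side $s_n := e^{-n(a_{i_n} + t_{n, i_n})}$. Directions $k \in \ck_1(i_n)$ are resolved by the $1/s_n$ cubes spanning $[0,1]$; directions $k \in \ck_3(i_n)$ need one cube per cell, giving $|\beta_k|^n$ cubes; directions $k \in \ck_2(i_n)$ need $|\beta_k|^n \cdot \ell_{n,k}/s_n$ cubes. Multiplying the three factors and raising to the power $\alpha$, a direct computation identifies the contribution of scale $n$ to the $\alpha$-Hausdorff sum as $\exp\!\bigl(n(a_{i_n} + t_{n, i_n})(\theta_{i_n}(\bt_n) - \alpha)\bigr)$. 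Summability for every $\alpha > \sup_{\bt \in \cu(\Psi)} \min_i \theta_i(\bt)$ then follows from the boundedness of $\cu(\Psi)$ and the (piecewise linear) continuity of $\theta_i$ in $\bt$, yielding $\hdim \rc(\Psi) \le \sup_{\bt} \min_i \theta_i(\bt)$.

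\textbf{Lower bound.} For any $\bt^* \in \cu(\Psi)$, extract a subsequence $n_k \to \infty$ with $\bt_{n_k} \to \bt^*$ and $n_k/n_{k-1} \to \infty$; set $\theta := \min_i \theta_i(\bt^*)$ and let $i^*$ realise this minimum. I would construct a Cantor subset $E \subseteq \rc(\Psi)$ inductively by selecting, within each surviving parent rectangle of stage $k - 1$, all \emph{full} product cylinders of level $n_k - n_{k-1}$ (whose proportion is bounded below uniformly in $k$ by standard $\beta$-expansion estimates), and within each such cylinder the unique recurrence rectangle of sides $\ell_{n_k, i}$. Equip $E$ with the measure $\nu$ splitting mass equally among children at every level. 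To estimate $\nu(B(\bx,r))$, locate the level $k$ with $e^{-n_k(a_{i^*} + t^*_{i^*})} \le r < e^{-n_{k-1}(a_{i^*} + t^*_{i^*})}$ and count, direction by direction, the level-$k$ rectangles meeting $B(\bx,r)$: the three cases mirror the definitions of $\ck_1(i^*), \ck_2(i^*), \ck_3(i^*)$ in the upper bound, and together they give $\nu(B(\bx,r)) \ll r^{\theta - \epsilon}$. The mass distribution principle then yields $\hdim E \ge \theta - \epsilon$, and letting $\epsilon \to 0$ and varying $\bt^* \in \cu(\Psi)$ completes the proof.

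\textbf{Main obstacle.} The delicate point is the ball-mass estimate in the lower bound: because the sides $\ell_{n_k, i}$ contract at rates $a_i + t_i^*$ that depend on $i$, a fixed radius $r$ resolves a different number of scales in different directions, and one must carry out exactly the three-case bookkeeping encoded in $\ck_1, \ck_2, \ck_3$ to match the covering calculation. A secondary technicality is that for non-integer $\beta_i$ generic $n$-cylinders for $T_{\beta_i}$ are not full; this is managed by restricting at each Cantor stage to full cylinders (of uniformly positive density) and choosing the gaps $n_k - n_{k-1}$ large enough that the resulting constants are absorbed by the exponential factors in $\nu(B(\bx,r))$.
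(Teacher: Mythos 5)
Your proposal follows the paper's approach for the \emph{upper bound} quite closely: both cover $\rc(\Psi)$ at stage $n$ by rectangles coming from the cylinder structure, then cover those rectangles by balls of one well-chosen side length and separate the directions into the three classes $\ck_1$, $\ck_2$, $\ck_3$. The one imprecision you make is writing that $A_n$ consists of $\prod_i |\beta_i|^n$ rectangles; for non-integer $\beta_i$ the number of order-$n$ cylinders for $T_{\beta_i}$ must be controlled by topological entropy, i.e.\ $N_{i,n}\le |\beta_i|^{n(1+\epsilon)}$ for large $n$ (equation \eqref{eq:Nn upper bound} in the paper), with $\epsilon\to 0$ absorbed at the end. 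That correction does not change the conclusion, and your direct identification of the per-scale contribution to the $\alpha$-Hausdorff sum as $\exp(n(a_{i}+t_{n,i})(\theta_{i}(\bt_n)-\alpha))$ is correct.

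For the \emph{lower bound} you propose a genuinely different route: a hand-built Cantor set together with the mass distribution principle, whereas the paper invokes the Wang--Wu mass transference principle from rectangles to rectangles (Theorem~\ref{t:MTP RtoR}) applied to full subshifts $\Lambda_k^{(i)}$ obtained from Markov subsystems (Propositions~\ref{p:Markov subsystem}--\ref{p:full shift}). The paper's approach buys two things your sketch does not handle cleanly. First, for non-integer and in particular negative $\beta_i$, the na\"ive cylinder structure is irregular; the Markov-subsystem device produces a genuine full shift with $\hdim\Lambda_k^{(i)}\ge 1-\epsilon$ on which an Ahlfors-regular measure lives, and this is exactly what feeds into Theorem~\ref{t:MTP RtoR}. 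Your appeal to ``selecting all full product cylinders (whose proportion is bounded below uniformly in $k$ by standard $\beta$-expansion estimates)'' is not quite right: the \emph{proportion} of full cylinders among all order-$m$ cylinders tends to zero (since the total count grows like $|\beta|^{m(1+\epsilon)}$ while full cylinders number $\asymp|\beta|^m$); what is true, and what you actually need, is that the \emph{number} of full cylinders grows exponentially with base $|\beta|$, which recovers full dimension in the limit. For negative $\beta$ there is no elementary notion of ``full cylinder'' in the same sense, which is precisely why the paper goes through the Markov subsystem and primitivity of the incidence matrix. Second, the anisotropic ball-mass estimate that you defer as the ``main obstacle'' is the entire content of Wang--Wu's theorem; citing it packages that three-case bookkeeping once and for all. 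Your outline is plausible in spirit and, with the above repairs, could be made rigorous for $\beta_i>1$, but to reach the generality of the theorem ($|\beta_i|>1$, real) you would effectively be reproving the Markov subsystem lemma and a special case of the rectangles-to-rectangles mass transference principle.
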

\begin{rem}
	The monotonicity of $ \psi_i $ ($ 1\le i\le d $) can be removed if each $ \beta_i $ is positive. For details on removing this condition, we refer to \cite[Proof of Theorem 12]{LLVZ22}.
\end{rem}
\begin{rem}\label{r:comp}
	Under the setting of Theorem \ref{t:dimension}, Li, Liao, Velani and Zorin \cite[\S 5.2]{LLVZ22} proved that the Hausdorff dimension of shrinking target set
	\[W(\Psi):=\{\bx\in[0,1]^d:|T^n_{\beta_i}x_i-y_i|<\psi_i(n)\ (1\le i\le d)\text{ for infinitely many }n\in\N\}\]
	is greater than or equal to the right of \eqref{eq:dimrp}. When $ \beta_i>1 $ for $ 1\le i\le d $, they further showed that the upper bound of $ \hdim W(\Psi) $ coincides with this lower bound by utilizing the result of Bugeaud and Wang \cite[Theorem 1.2]{BuWa2014}. It is worth noting that the  method employed in establishing the upper bound of $ \hdim \rc(\Psi) $ does not rely on Bugeaud and Wang's result and can also be applied to obtain the desired upper bound for $ \hdim W(\Psi) $. Consequently, under the setting of Theorem \ref{t:dimension}, we have
	\[\hdim W(\Psi)=\sup_{\bt\in\cu(\Psi)}\min_{1\le i\le d}\theta_i(\bt),\]
	which affirmatively answers a question raised in \cite[Claim 1]{LLVZ22}.

	\begin{rem}
		Note that the sets $ W(\Psi) $ and $ \rc(\Psi) $ share the same dimensional formulae. So, one would like to treat the dimensions of these two sets in a unified way by considering the following set
		\[W(\Psi,f):=\{\bx\in[0,1]^d:T^n\bx\in R(f(\bx), \Psi(n))\text{ for infinitely many }n\in\N\},\]
		where $ f $ is a Lipschitz function and $ T $ is a diagonal matrix given in Theorem \ref{t:dimension}. Yuan and Wang \cite[Theorem 1.1]{YuWa23} proved that under the condition $ \beta_i>1 $ for $ 1\le i\le d $, $ \hdim W(\Psi,f) $ also equals to the right of $ \eqref{eq:dimrp} $. However, when $ \beta_i<-1 $ for some $ 1\le i\le d $, the Hausdorff dimension of $ W(\Psi,f) $ remains unknown except for two special cases $ f(\bx)=\bx $ and $ f(\bx)\equiv \textbf{y} $ discussed above. We can obtain the upper bound of $ \hdim W(\Psi, f) $ by applying the same techniques in our present paper. But our method using the approximation of Markov subsystems fails in obtaining the lower bound.
	\end{rem}
\end{rem}

Our paper is organized as follows. The convergence part of Theorem \ref{t:main} is given in Section \ref{s:convergence}. The divergence part will be proved in Section \ref{s:divergence}. More precisely, we first introduce a sequence of auxiliary sets $ \hat E_n $ and estimate their measures in Section \ref{s:auxiliary sets}, and then estimate the correlations of these sets in Section \ref{s:correlation}. This combined with a technical argument allows us to conclude the divergence part of Theorem \ref{t:main} in Section \ref{s:full measure}. In Section \ref{s:hyperbola}, we adopt a similar but more direct approach compared to the proof of Theorem \ref{t:main} to establish Theorem \ref{t:main s}. The last section is reserved for determining the Hausdorff dimension of $ \rc(\Psi) $.

\section{Proof of Theorem \ref{t:main}}
Throughout this section, we will always assume that $ \mu $ is exponential mixing and the density $ h $ of $ \mu $ belongs to $ L^q(\lm^d) $ for some $ q > 1 $. Besides, we fix the collection $ \mathcal C_1 $ of subsets $ F\subset [0,1]^d $ satisfying the bounded property
\[\pro: \qquad\sup_{F\in\mathcal C_1}\mc(\partial F)<4d+\frac{K}{1-L^{-(d-1)}},\]
where $ L $ and $ K $ are constants given in Definition \ref{d:C1 map}.
For any hyperrectangle $ R\subset [0,1]^d $, its boundary consists of $ 2d $ hyperrectangles of dimension $ d-1 $, each of which has a $ (d-1) $-dimensional Lebesgue measure less than $ 1 $. Thus, $ \mc(\partial R)\le 2d $ and the family of hyperrectangles satisfies the bounded property $ \pro $.

\subsection{Convergence part}\label{s:convergence}
This subsection is devoted to proving the following proposition that applies to the convergence part of Theorem \ref{t:main}.

\begin{prop}\label{p:converges implies zero measure}
	If $ \sum_{n=1}^{\infty}r_{n,1}\cdots r_{n,d}<\infty $ ,then
	\[\mu(\rc(\{\br_n\}))=0.\]
\end{prop}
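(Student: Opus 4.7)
The plan is to apply the convergent Borel--Cantelli lemma to the $\limsup$ representation
\[\rc(\{\br_n\}) = \bigcap_{N}\bigcup_{n\ge N}E_n, \qquad E_n := \{\bx\in[0,1]^d : T^n\bx\in R(\bx,\br_n)\},\]
so that it suffices to show $\sum_n\mu(E_n)<\infty$. The main difficulty is that the target $R(\bx,\br_n)$ depends on the starting point $\bx$, so I would first decouple the two occurrences of $\bx$ by introducing a grid: partition $[0,1]^d$ into essentially disjoint axis-parallel rectangles $\{Q\}$ whose $i$-th side has length $r_{n,i}$ (valid for $n$ large by \eqref{eq:rn to 0}). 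For $\bx\in Q$ the target $R(\bx,\br_n)$ is contained in the $3$-fold neighbourhood $3Q$, namely the union of $Q$ with its at most $3^d-1$ neighbours in the partition, so
\[E_n \subset \bigcup_Q \bigl(Q\cap T^{-n}(3Q)\bigr), \qquad \mu(E_n) \le \sum_Q\mu\bigl(Q\cap T^{-n}(3Q)\bigr).\]

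Since $Q$ and $3Q$ are axis-parallel hyperrectangles with boundary $(d-1)$-Minkowski content bounded by $2d$, they belong to the class $\mathcal C_1$ satisfying $\pro$. Definition \ref{d:exponential mixing} then gives, for some constants $c,\tau>0$,
\[\mu\bigl(Q\cap T^{-n}(3Q)\bigr)\le\mu(Q)\mu(3Q)+c\,\mu(3Q)e^{-\tau n}.\]
The error term summed over $Q$ contributes at most $c\cdot 3^d e^{-\tau n}$ (using $\sum_Q\mu(3Q)\le 3^d$), which is geometrically summable. For the main term I would rewrite
\[\sum_Q \mu(Q)\mu(3Q) = \int_{[0,1]^d} h(\bx)\,\mu\bigl(3Q(\bx)\bigr)\,d\lm^d(\bx),\]
where $Q(\bx)$ is the partition rectangle containing $\bx$, and bound $\mu(3Q(\bx))\le 3^d\bigl(\prod_i r_{n,i}\bigr)\, M_Z h(\bx)$, where $M_Z$ is the Zygmund strong maximal operator over axis-parallel rectangles. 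The strong maximal inequality $\|M_Z h\|_q\lesssim\|h\|_q$, valid for every $q>1$, together with the Zygmund differentiation theorem (Theorem \ref{t:differentiation theorem}) then gives $\int h\cdot M_Z h\,d\lm^d\le C<\infty$ for a constant $C$ depending only on $h$, whence $\sum_Q \mu(Q)\mu(3Q)\le 3^d C\prod_i r_{n,i}$.

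Combining, $\sum_n\mu(E_n)\le 3^d C\sum_n\prod_i r_{n,i}+c\cdot 3^d\sum_n e^{-\tau n}<\infty$, and Borel--Cantelli delivers $\mu(\rc(\{\br_n\}))=0$. The delicate step of the plan is the uniform bound on $\int h\cdot M_Z h$: for $q\ge 2$ this is immediate from Cauchy--Schwarz applied to $h,M_Z h\in L^2$ on the finite measure space $[0,1]^d$, whereas for $1<q<2$ one must genuinely exploit the Zygmund differentiation theorem -- for instance through a Luzin-style truncation reducing matters to a bounded density on a set of arbitrarily large $\mu$-measure -- to bridge the gap between $L^q$ and its dual exponent $L^{q'}$.
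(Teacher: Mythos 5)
Your grid-based decomposition is a clean and valid alternative to the paper's own Vitali covering argument: rather than inductively selecting a maximal family of rectangles centred in a good set, you tile $[0,1]^d$ with a fixed grid of cells $Q$ of side lengths $r_{n,i}$ and observe $E_n\subset\bigcup_Q\bigl(Q\cap T^{-n}(3Q)\bigr)$. This avoids the selection step entirely, and the mixing estimate and the error-term bookkeeping go through exactly as you write.

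The gap is in the main term. The bound
\[\sum_Q\mu(Q)\mu(3Q)\le 3^d\Bigl(\prod_i r_{n,i}\Bigr)\int h\cdot M_Zh\dif\lm^d\]
requires $\int h\cdot M_Zh\dif\lm^d<\infty$, and this genuinely fails in the range $1<q<2$ the paper allows. A concrete one-dimensional counterexample: take $h(x)=c\,x^{-\alpha}$ with $\tfrac12<\alpha<\tfrac1q$, so $h\in L^q$; near the origin $M_Zh(x)$ is comparable to $x^{-\alpha}$, so $h\cdot M_Zh$ is comparable to $x^{-2\alpha}$, which is not integrable. Thus the direct argument does not give $\sum_n\mu(E_n)<\infty$ under the stated hypothesis, and indeed one should not expect it to: the paper itself only proves $\sum_n\mu(Z(k,l)\cap E_n)<\infty$ for each piece of an exhaustion, not convergence of $\sum_n\mu(E_n)$.

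Your sketched repair, a ``Luzin-style truncation reducing matters to a bounded density on a set of arbitrarily large $\mu$-measure,'' does not quite close the gap either, because the quantity you must control, $\mu(3Q)$ for $Q$ meeting the good set, involves the density \emph{outside} the good set: $3Q$ need not be contained in it, so pointwise boundedness of $h$ there gives no control on $\mu(3Q)$. The correct truncation is on the maximal function, not on $h$. Set $Z_M:=\{\bx:M_Zh(\bx)\le M\}$; since $h\in L^q$ with $q>1$ forces $M_Zh<\infty$ $\lm^d$-a.e., one has $\mu(\bigcup_M Z_M)=1$. If $Q\cap Z_M\ne\emptyset$, pick $\by\in Q\cap Z_M$; since $\by\in 3Q$ one gets $\mu(3Q)\le\lm^d(3Q)\,M_Zh(\by)\le 3^dM\prod_ir_{n,i}$, whence
\[\mu(Z_M\cap E_n)\le\sum_{Q:\,Q\cap Z_M\ne\emptyset}\mu\bigl(Q\cap T^{-n}(3Q)\bigr)\le 3^dM\prod_ir_{n,i}+3^dce^{-\tau n},\]
which is summable, and Borel--Cantelli gives $\mu(Z_M\cap\rc(\{\br_n\}))=0$ for every $M$; taking the union over $M$ finishes the proof. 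This is, up to the Vitali-versus-grid difference, precisely the role of the paper's sets $Z(k,l)$: they impose a uniform bound on $\mu(R(\bx,2\br_n))$, which is a maximal-function-type condition rather than a pointwise bound on the density.
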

Note that $ \rc(\{\br_n\})=\limsup E_n $, where
\[E_n:=\{\bx\in [0,1]^d:T^n\bx\in R(\bx,\br_n)\}.\]
We follow the idea from \cite[Lemma 2.2]{HLSW22} that when considering $ E_n $ locally, the set $ E_n $ can be approximated by a hyperrectangle intersecting with the $ n $-th inverse of another hyperrectangle.

\begin{lem}\label{l:simple lemma}
	Let $ R(\bx,\br) $ be a hyperrectangle with center $ \bx=(x_1,\dots,x_d)\in [0,1]^d $ and $ \br=(r_1,\dots,r_d)\in(\R_{\ge 0})^d $. Then, for any subset $ F $ of $ R(\bx,\br) $,
	\[F\cap T^{-n}R(\bx,\br_n-\br)\subset F\cap E_n\subset F\cap T^{-n} R(\bx,\br_n+\br).\]
\end{lem}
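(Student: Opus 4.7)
The plan is to reduce the lemma to a coordinatewise triangle inequality. Writing $\mathbf{y} = (y_1,\dots,y_d)$ for a generic point and unfolding all three memberships: $\mathbf{y} \in F \subset R(\bx, \br)$ gives $|y_i - x_i| < r_i$ for every $i$; $\mathbf{y} \in E_n$ is equivalent to $|(T^n\mathbf{y})_i - y_i| < r_{n,i}$ for every $i$; and $\mathbf{y} \in T^{-n} R(\bx, \br_n \pm \br)$ is equivalent to $|(T^n\mathbf{y})_i - x_i| < r_{n,i} \pm r_i$ for every $i$. Once everything is expressed coordinatewise, both inclusions follow from the triangle inequality $|a - c| \le |a - b| + |b - c|$ applied with $a = (T^n\mathbf{y})_i$ and suitable choices of $b, c \in \{y_i, x_i\}$.

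For the right-hand inclusion, I would start with $\mathbf{y} \in F \cap E_n$ and estimate $|(T^n\mathbf{y})_i - x_i| \le |(T^n\mathbf{y})_i - y_i| + |y_i - x_i| < r_{n,i} + r_i$ for each $i$, which places $T^n\mathbf{y}$ in $R(\bx, \br_n + \br)$. For the left-hand inclusion, I would start with $\mathbf{y} \in F \cap T^{-n} R(\bx, \br_n - \br)$ and estimate $|(T^n\mathbf{y})_i - y_i| \le |(T^n\mathbf{y})_i - x_i| + |x_i - y_i| < (r_{n,i} - r_i) + r_i = r_{n,i}$, placing $T^n\mathbf{y}$ in $R(\mathbf{y}, \br_n)$ and hence $\mathbf{y}$ in $E_n$.

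There is essentially no obstacle here; the only mildly delicate point is the case when some component of $\br_n - \br$ is negative, which I would dispose of by the convention that $R(\bx, \br_n - \br)$ is then empty, making the left-hand inclusion vacuous. The real value of the lemma is conceptual rather than technical: it converts the self-referential condition $T^n \mathbf{y} \in R(\mathbf{y}, \br_n)$ defining $E_n$ into an ordinary preimage condition with a fixed reference center $\bx$, up to an error in the radius controlled by $\br$. This is precisely what will later allow the mixing property of $\mu$ to be brought to bear.
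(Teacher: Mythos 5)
Your proposal is correct and follows essentially the same argument as the paper: a coordinatewise application of the triangle inequality, with the right-hand inclusion proved explicitly and the left-hand one following the analogous estimate (the paper just says ``follows similarly''). Your additional remark about the convention when a component of $\br_n - \br$ is negative is a sensible clarification but does not alter the substance of the argument.
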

\begin{proof}
	Fix a point $ \bz=(z_1,\dots,z_d)\in F\cap E_n $. Then, $ \bz\in R(\bx,\br) $ and $ T^n\bz\in R(\bz,\br_n) $. Write $ T^n\bz=(z_1',\dots, z_d') $. It follows that for any $ 1\le i\le d $,
	\[|z_i-x_i|<r_i\quad\text{and}\quad| z_i'-z_i|<r_{n,i}.\]
	By using the triangle inequality, one has
	\[|z_i'-x_i|<|z_i'-z_i|+|z_i+x_i|<r_{n,i}+r_i\quad\text{for }1\le i\le d.\]
	This implies $ T^n\bz\in R(\bx,\br_n+\br) $. Therefore,
	\[F\cap E_n\subset F\cap T^{-n} R(\bx,\br_n+\br).\]

	The first inclusion follows similarly.
\end{proof}

Since the density $ h $ of $ \mu $ may be unbounded, the $\mu$-measures of hyperrectangles depend on their location. A differentiation theorem discovered by Zygmund tells us that if $ h $ belongs to $ L^q(\lm^d) $ for some $ q> 1 $, then for $\mu$-almost all points $ \bx $, the $ \mu $-measure of a small hyperrectangle $ R $ with center $ \bx $ and sides parallel to the axes is roughly equal to the volume of $ R $ multiplied by $ h(\bx) $. The proof can be found in \cite[Theorem 2.29]{Hajlasz}.

\begin{thm}[Zygmund differentiation theorem]\label{t:differentiation theorem}
	Let $ \{\br_n\} $ be a sequence of positive vectors with $ \lim_{n\to\infty}|\br_n|\to 0 $. If $ f\in L^q(\lm^d) $ for some $ q>1 $, then
	\[\lim_{n\to\infty}\frac{\int_{R(\bx,\br_n)} f(\bz)\dif\lm^d(\bz)}{\lm^d(R(\bx,\br_n))}=f(\bx)\qquad\text{for }\lm^d\text{-a.e.\,}\bx.\]
\end{thm}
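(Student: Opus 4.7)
The plan is to prove this via the strong (two-parameter) maximal function machinery. First I would define the strong maximal operator
\[
M_s f(\bx)=\sup_{R\ni\bx}\frac{1}{\lm^d(R)}\int_R |f(\bz)|\dif\lm^d(\bz),
\]
where the supremum is taken over all hyperrectangles $R\subset\R^d$ with sides parallel to the coordinate axes containing $\bx$. The usual reduction works: since for every continuous $g$ the limit $\lm^d(R(\bx,\br_n))^{-1}\int_{R(\bx,\br_n)}g\,\dif\lm^d\to g(\bx)$ holds for every $\bx$ (by uniform continuity on a compact neighborhood, using $|\br_n|\to 0$), the theorem follows from the $L^q$-boundedness of $M_s$ together with a standard density argument.

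Second, I would establish the Jessen--Marcinkiewicz--Zygmund inequality $\|M_s f\|_{L^q(\lm^d)}\le C_{d,q}\|f\|_{L^q(\lm^d)}$ for every $q>1$. The key observation is that the average of $|f|$ over a hyperrectangle $R=I_1\times\cdots\times I_d$ is an iterated one-dimensional average, so
\[
M_s f(\bx)\le \bigl(M^{(1)}\circ M^{(2)}\circ\cdots\circ M^{(d)}\bigr)f(\bx),
\]
where $M^{(i)}$ denotes the one-dimensional Hardy--Littlewood maximal operator acting in the $i$-th variable with the remaining variables held fixed as parameters. Applying the classical Hardy--Littlewood $L^q(\R)$-inequality successively in each coordinate, with Fubini's theorem between applications, gives the desired $L^q(\lm^d)$-bound, with constant $C_{d,q}=c_q^d$.

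Third, I would carry out the density argument. Given $f\in L^q(\lm^d)$ and $\varepsilon>0$, choose a continuous compactly supported $g$ with $\|f-g\|_{L^q}<\varepsilon$, and set $h=f-g$. Writing
\[
\Delta_n(\bx):=\frac{1}{\lm^d(R(\bx,\br_n))}\int_{R(\bx,\br_n)}f\dif\lm^d-f(\bx),
\]
we have $\limsup_{n}|\Delta_n(\bx)|\le M_s h(\bx)+|h(\bx)|$ for every $\bx$, because the corresponding quantity for $g$ tends to $0$ everywhere. Chebyshev's inequality together with the maximal inequality yields
\[
\lm^d\bigl(\{\bx:\limsup_n|\Delta_n(\bx)|>\lambda\}\bigr)\le \bigl(C_{d,q}+1\bigr)^q\,\lambda^{-q}\|h\|_{L^q}^q\le C\varepsilon^q\lambda^{-q}
\]
for every $\lambda>0$. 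Sending $\varepsilon\to 0$ (with $\lambda$ fixed, then $\lambda\to 0$ along a countable sequence) produces a Lebesgue-null exceptional set, which delivers the almost-everywhere convergence.

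The main obstacle is the second step: the pointwise domination of $M_s$ by the iterated one-dimensional maximal operators must be verified carefully, and the Fubini/$L^q$-iteration must be arranged so that each application of the one-dimensional inequality acts on the correct slice. This is also where the hypothesis $q>1$ is unavoidable, since $M_s$ is \emph{not} of weak type $(1,1)$ on $\R^d$ for $d\ge 2$ (only an Orlicz-type $L\log^{d-1}L$ bound holds at the endpoint); the cleaner $L^q$-statement here is exactly what the hypothesis $f\in L^q(\lm^d)$ buys us, and it is what makes the density step above close up without any further structural input.
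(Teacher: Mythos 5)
The paper does not prove this theorem; it simply cites Haj{\l}asz's lecture notes, where the result is derived exactly as you describe, via the Jessen--Marcinkiewicz--Zygmund strong maximal function theorem and the standard density argument. Your proposal is correct and follows the same standard route, including the essential observations that the rectangles must be axis-parallel and that $q>1$ is needed because the strong maximal operator is not of weak type $(1,1)$ in dimension $d\ge 2$.
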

\begin{rem}
Zygmund differentiation theorem works only for hyperrectangles with sides parallel to the axes, but not for other cases in general. If one allows the hyperrectangles to have different rotations then Theorem \ref{t:differentiation theorem} no longer holds.
\end{rem}

Now, let us go back to the proof of Proposition \ref{p:converges implies zero measure}. Since $ h $ belongs to $ L^q(\lm^d) $ for some $ q>1 $, by Theorem \ref{t:differentiation theorem},
\[0\le \lim_{n\to \infty}\frac{\mu(R(\bx,2\br_n))}{\lm^d(R(\bx,2\br_n))}=\lim_{n\to \infty}\frac{\mu(R(\bx,2\br_n))}{4^dr_{n,1}\cdots r_{n,d}}=h(\bx)<\infty\quad\text{for }\mu\text{-a.e.\,}\bx\in [0,1]^d.\]
This implies that
\begin{equation}\label{eq:decomposition}
	\mu\bigg(\bigcup_{k=1}^\infty\bigcup_{l=1}^\infty  Z(k,l)\bigg)=1,
\end{equation}
where
\[ Z(k,l):=\{\bx\in [0,1]^d: \mu(B(\bx,2\br_n))\le kr_{n,1}\cdots r_{n,d}\text{ for all }n\ge l\}.\]

\begin{proof}[Proof of Proposition \ref{p:converges implies zero measure}]
	By \eqref{eq:decomposition}, we fix one $ Z(k,l) $ with $ \mu(Z(k,l))>0 $. Let $ n\in\N $ with $ n\ge l $. We will inductively define a finite family $ \{R(\bx_i,\br_n):\bx_i\in  Z(k,l)\}_{i\in\mathcal I} $ of  hyperrectangles such that the union of these hyperrectangles covers $ Z(k,l) $, and that the collection of `$ 1/2 $-scaled up' hyperrectangles $ \{R(\bx_i,\br_n/2)\}_{i\in\mathcal I} $ is pairwise disjoint.

	Choose $ \bx_1\in Z(k,l) $ and let $ R(\bx_1,\br_n) $ be a hyperrectangle centered at $ \bx_1 $. Inductively, assume that $ R(\bx_1,\br_n),\dots, R(\bx_j,\br_n) $ have been defined for some $ j\ge 1 $. If the union $ \cup_{i\le j}R(\bx_i,\br_n) $ does not cover $ Z(k,l) $, let $ \bx_{j+1}\in Z(k,l)\setminus\cup_{i\le j}R(\bx_i,\br_n)  $. Otherwise, we set $ \mathcal I=\{1,\dots,j\} $ and terminate the inductive definition. It remains to show that $ \{R(\bx_i,\br_n/2)\}_{i\in\mathcal I} $ is pairwise disjoint. For any $ i $ and $ j $ with $ i<j $, by definition we have $ \bx_{j}\notin R(\bx_i,\br_n) $. Therefore, $ R(\bx_i,\br_n/2)\cap R(\bx_j,\br_n/2)=\emptyset $.

	The disjointness of $ \{R(\bx_i,\br_n/2)\}_{i\in\mathcal I} $ implies that
	\begin{equation}\label{eq:cardinality I}
		 \#  \mathcal I\le (r_{n,1}\cdots r_{n,d})^{-1}.
	\end{equation}
	By Lemma \ref{l:simple lemma}, we have
	\[ Z(k,l)\cap E_n\subset \bigcup_{i\in\mathcal I}R(\bx_i,\br_n)\cap E_n\subset\bigcup _{i\in\mathcal I}R(\bx_i,\br_n)\cap T^{-n}R(\bx_i,2\br_n).\]
	Using the exponential mixing property of $ \mu $, \eqref{eq:cardinality I} and the definition of $  Z(k,l) $, we have
	\[\begin{split}
		\mu( Z(k,l)\cap E_n)&\le \sum_{i\in \mathcal I}\mu(R(\bx_i,\br_n)\cap T^{-n}R(\bx_i,2\br_n))\\
		&\le \sum_{i\in \mathcal I}\big(\mu(R(\bx_i,\br_n))+ce^{-\tau n}\big)\mu(R(\bx_i,2\br_n))\\
		&\le (r_{n,1}\cdots r_{n,d})^{-1}\cdot(kr_{n,1}\cdots r_{n,d}+ce^{-\tau n})\cdot kr_{n,1}\cdots r_{n,d}\\
		&=k(kr_{n,1}\cdots r_{n,d}+ce^{-\tau n}).
	\end{split}\]

Since the sum $ \sum_{n\ge 1}r_{n,1}\cdots r_{n,d} $ converges, we get
\[\sum_{n=1}^\infty\mu( Z(k,l)\cap E_n)<\infty.\]
By Borel--Cantelli lemma, $ \mu( Z(k,l)\cap \rc(\{\br_n\}))=0 $. Finally, it follows from \eqref{eq:decomposition} that
\[\mu(\rc(\{\br_n\}))=0.\qedhere\]
\end{proof}

\subsection{Divergence part}\label{s:divergence}

In the sequel, we assume that $ \sum_{n\ge 1} r_{n,1}\cdots r_{n,d}=\infty $. Without loss of generality, we further assume that for all $ n\in\N $, either $ r_{n,i} $ is greater than $ n^{-2} $ for all $ 1\le i\le d $, or equal to $ 0 $ for all $ 1\le i\le d $. We will prove that the set $ \rc(\{\br_n\}) $ has full $\mu$-measure under these assumptions. In fact, note that \begin{equation}\label{eq:subset}
	\limsup_{n\colon \forall  i,\, r_{n,i}> n^{-2}}E_n\subset \limsup_{n\to\infty}E_n
\end{equation}
and
\[\sum_{n\colon \forall  i,\, r_{n,i}> n^{-2}}r_{n,1}\cdots r_{n,d}=\infty\quad\Longleftrightarrow\quad\sum_{n=1}^{\infty}r_{n,1}\cdots r_{n,d}=\infty.\]
Then, the original set $ \rc(\{\br_n\}) $ will be of full measure provided that the set in the left of \eqref{eq:subset} has full measure.
\subsubsection{A sequence of auxiliary sets $ \hat E_n $}\label{s:auxiliary sets}
\

Since the density $h$ of $\mu$ may be unbounded, instead of the sets $ E_n $, it is more suitable to study the following auxiliary sets $ \hat E_n $. The idea of the constructions of $ \hat E_n $ originates from \cite{KKP21}. For any $ \bx\in[0,1]^d $ and $ n\in\N $, let $ l_n(\bx)\in\R_{\ge 0} $ be the non-negative number such that $ \mu(R(\bx,l_n(\bx)\br_n))=r_{n,1}\cdots r_{n,d} $. Let
\[\xi_n(\bx):=l_n(\bx)\br_n\in(\R_{\ge 0})^d.\]
Then $ R(\bx, \xi_n(\bx)) $ is a hyperrectangle obtained by scaling $ R(\bx, \br_n) $ by a factor $ l_n(\bx) $. Define
\[\hat E_n:=\{\bx\in[0,1]^d:T^n\bx\in R(\bx, \xi_n(\bx))\}\quad\text{and}\quad \hat \rc(\{\br_n\}):=\limsup\hat E_n.\]

The next lemma decribes some local structures of $ \hat E_n $ which are similar to those of $ E_n $.

\begin{lem}\label{l:simple lemma s}
	Let $ R(\bx,\br) $ be a hyperrectangle with center $ \bx\in [0,1]^d $ and $ \br=(r_1,\dots,r_d)\in(\R_{\ge 0})^d $. Then, for any $ n\in\N $ with $ \br_n\ne \mathbf{0} $ and any subset $ F $ of $ R(\bx,\br) $, we have
	\[F\cap T^{-n}R(\bx,\xi_n(\bx)-2t\br_n)\subset F\cap\hat E_n\subset F\cap T^{-n} R(\bx,\xi_n(\bx)+2t\br_n),\]
	where $ t=\max_{1\le i\le d}(r_i/r_{n,i}) $.
\end{lem}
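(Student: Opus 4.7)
The plan is to mimic the proof of Lemma \ref{l:simple lemma}, where the inclusions followed from a straightforward triangle inequality once the target $R(\bz,\br_n)$ was compared to $R(\bx,\br_n)$. The new complication here is that the ``target'' $R(\bz,\xi_n(\bz))$ in the definition of $\hat E_n$ depends on the base point $\bz$ through the implicitly-defined scaling factor $l_n(\bz)$. So the proof splits into two steps: (a) control how much $l_n(\bz)$ can vary as $\bz$ ranges over $R(\bx,\br)$; (b) plug this control into a triangle-inequality argument identical in spirit to that of Lemma \ref{l:simple lemma}.

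For step (a), I will establish the Lipschitz-type estimate
\[|l_n(\bz)-l_n(\bx)|\le t\qquad\text{for all }\bz\in R(\bx,\br).\]
The idea is a direct monotonicity comparison of the $\mu$-measures of nested rectangles. For any $\bz\in R(\bx,\br)$, the bound $r_i\le t r_{n,i}$ together with the triangle inequality yields
\[R(\bx,l\br_n)\subset R(\bz,(l+t)\br_n)\quad\text{and}\quad R(\bz,l\br_n)\subset R(\bx,(l+t)\br_n)\]
for every $l\ge 0$. Taking $\mu$-measures and using the defining equation $\mu(R(\bx,l_n(\bx)\br_n))=r_{n,1}\cdots r_{n,d}=\mu(R(\bz,l_n(\bz)\br_n))$ (with the convention that $l_n(\cdot)$ denotes the smallest admissible value, so measure is strictly increasing up to this threshold) forces $l_n(\bz)\le l_n(\bx)+t$ and, by the symmetric inclusion, $l_n(\bx)\le l_n(\bz)+t$.

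For step (b), fix $\bz=(z_1,\dots,z_d)\in F\cap\hat E_n$, write $T^n\bz=(z_1',\dots,z_d')$, and note that by definition $|z_i'-z_i|<l_n(\bz)r_{n,i}$ and $|z_i-x_i|<r_i\le tr_{n,i}$. The triangle inequality and step (a) then give
\[|z_i'-x_i|<l_n(\bz)r_{n,i}+tr_{n,i}\le (l_n(\bx)+2t)r_{n,i},\]
which is exactly $T^n\bz\in R(\bx,\xi_n(\bx)+2t\br_n)$, establishing the second inclusion. For the first inclusion, suppose $\bz\in F\cap T^{-n}R(\bx,\xi_n(\bx)-2t\br_n)$ (the case where some coordinate of $\xi_n(\bx)-2t\br_n$ is non-positive is vacuous since then the rectangle is empty). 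Then $|z_i'-x_i|<(l_n(\bx)-2t)r_{n,i}$ and $|z_i-x_i|<r_i\le tr_{n,i}$, so
\[|z_i'-z_i|<(l_n(\bx)-2t)r_{n,i}+tr_{n,i}=(l_n(\bx)-t)r_{n,i}\le l_n(\bz)r_{n,i}\]
by step (a), showing $T^n\bz\in R(\bz,\xi_n(\bz))$ and hence $\bz\in\hat E_n$.

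The only genuinely nontrivial step is (a): it is where the hypothesis $\br_n\ne\mathbf 0$ is used (so that $l_n(\bx)$ is well-defined and uniquely determined by the smallest-value convention) and where the specific form of $t=\max_i(r_i/r_{n,i})$ becomes indispensable, since this is precisely the scale at which $R(\bx,\br)$ is comparable to a multiple of $R(\bx,\br_n)$. Once (a) is in hand, step (b) is a cosmetic adjustment of the proof of Lemma \ref{l:simple lemma}.
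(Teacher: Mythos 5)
Your proposal is correct and takes essentially the same route as the paper: both arguments reduce the lemma to the Lipschitz-type bound $|l_n(\bz)-l_n(\bx)|\le t$ (obtained by comparing the $\mu$-measures of nested rectangles), and then run the same triangle-inequality calculation as in Lemma \ref{l:simple lemma}. The only cosmetic difference is that the paper phrases the conclusion via set inclusions $R(\bz,\xi_n(\bx)-t\br_n)\subset R(\bz,\xi_n(\bz))\subset R(\bz,\xi_n(\bx)+t\br_n)$ where you work coordinate-by-coordinate; your explicit two-step split and the smallest-value convention for $l_n(\cdot)$ make the implicit monotonicity argument a bit more transparent than in the paper.
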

\begin{proof}
	Since $ r_i=(r_i/r_{n,i})\cdot r_{n,i}\le tr_{n,i} $, for any $ \bz\in F $, we have
	\begin{equation}\label{eq:z in larger rectangle}
		\bz\in F\subset R(\bx,\br)\subset R(\bx,t\br_n).
	\end{equation}
	Then, by the triangle inequality
	\[R(\bz,\xi_n(\bx)-t\br_n)\subset R(\bx, \xi_n(\bx))\subset R(\bz,\xi_n(\bx)+t\br_n).\]
	Hence,
	\[\mu(R(\bz,\xi_n(\bx)-t\br_n))\le\mu(R(\bx, \xi_n(\bx)))=r_{n,1}\cdots r_{n,d}\le \mu(R(\bz,\xi_n(\bx)+t\br_n)).\]
	By the definitions of $ l_n(\bz) $ and $ \xi_n(\bz) $, the above inequalities imply that
	\[l_n(\bx)-t\le l_n(\bz)\le l_n(\bx)+t,\]
and so
\begin{equation}\label{eq:radius estimations of y}
	R(\bz,\xi_n(\bx)-t\br_n)\subset R(\bz,\xi_n(\bz))\subset R(\bz,\xi_n(\bx)+t\br_n).
\end{equation}
	If $ \bz $ also belongs to $ \hat E_n $, then $ T^n\bz\in R(\bz, \xi_n(\bz)) $. We then deduce from \eqref{eq:z in larger rectangle} and \eqref{eq:radius estimations of y} that
	\[T^n\bz\in R(\bz, \xi_n(\bz))\subset R(\bz,\xi_n(\bx)+t\br_n)\subset R(\bx,\xi_n(\bx)+2t\br_n).\]
	That is, $ \bz\in T^{-n}R(\bx,\xi_n(\bx)+2t\br_n) $. Therefore,
	\[F\cap \hat E_n\subset F\cap T^{-n}R(\bx,\xi_n(\bx)+2t\br_n).\]

	The first inclusion follows similarly.
\end{proof}

We need to calculate the measures of the hyperrectangles appearing in Lemma \ref{l:simple lemma s}.
\begin{lem}\label{l:measure of enlarged ball}
	Write $ s=1-1/q $. For any $ \br=(r_1,\dots,r_d)\in(\R_{\ge 0})^d $, we have
	\begin{align*}
		\mu(R(\bx,\xi_n(\bx)+\br))&\le r_{n,1}\cdots r_{n,d}+c_1\cdot \max_{1\le i\le d}r_i^s,\\
		\mu(R(\bx,\xi_n(\bx)-\br))&\ge r_{n,1}\cdots r_{n,d}-c_1\cdot\max_{1\le i\le d}r_i^s,
	\end{align*}
	where $ c_1=2d\|h\|_q $, and $ \|h\|_q:=(\int |h|^q\dif\lm^d)^{1/q} $ is the $ L^q $-norm of $ h $.
\end{lem}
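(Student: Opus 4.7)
The plan is to express the difference $\mu(R(\bx,\xi_n(\bx)\pm\br))-\mu(R(\bx,\xi_n(\bx)))$ as the $\mu$-measure of a thin ``shell'' around $R(\bx,\xi_n(\bx))$, and to bound this shell using the $L^q$ control on the density $h$.

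Concretely, the symmetric difference $R(\bx,\xi_n(\bx)+\br)\setminus R(\bx,\xi_n(\bx))$ is a set of points whose $j$-th coordinate, for some $j$, lies within distance $r_j$ of the boundary of the inner rectangle in the $j$-th direction. Decompose this shell into $2d$ slabs $S_j^\pm$ ($1\le j\le d$): the slab $S_j^{\pm}$ consists of points whose $j$-th coordinate lies in an interval of length $r_j$ adjacent to the $\pm$-face of $R(\bx,\xi_n(\bx))$, and whose other coordinates lie in the outer rectangle. Since $S_j^\pm\subset [0,1]^d$, the other $d-1$ dimensions contribute at most $1$ each, so $\lm^d(S_j^\pm)\le r_j$. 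The same decomposition applies to the inner shell $R(\bx,\xi_n(\bx))\setminus R(\bx,\xi_n(\bx)-\br)$.

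Next, I apply H\"older's inequality with conjugate exponents $q$ and $q/(q-1)=1/(1-s)$: for any measurable $S\subset[0,1]^d$,
\[
\mu(S)=\int_S h\dif\lm^d\le\|h\mathbf{1}_S\|_q\,\lm^d(S)^{1-1/q}\le\|h\|_q\,\lm^d(S)^{s}.
\]
Combining this with $\lm^d(S_j^\pm)\le r_j$ yields $\mu(S_j^\pm)\le \|h\|_q r_j^s$, and summing over the $2d$ slabs gives
\[
\mu\bigl(R(\bx,\xi_n(\bx)+\br)\bigr)-\mu\bigl(R(\bx,\xi_n(\bx))\bigr)\le 2d\|h\|_q\max_{1\le i\le d}r_i^s,
\]
with the analogous bound for the inner shell. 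Since $\mu(R(\bx,\xi_n(\bx)))=r_{n,1}\cdots r_{n,d}$ by the definition of $\xi_n(\bx)$, both claimed inequalities follow with $c_1=2d\|h\|_q$.

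No real obstacle is anticipated: the argument is essentially geometric (the slab decomposition of a rectangular shell) together with one application of H\"older's inequality that produces the exponent $s=1-1/q$. The only mild subtlety is the degenerate case in which some component of $\xi_n(\bx)-\br$ is negative, but then $R(\bx,\xi_n(\bx)-\br)$ is empty and the lower bound is trivial.
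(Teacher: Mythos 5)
Your proof is correct and takes essentially the same route as the paper's: both decompose the rectangular shell $R(\bx,\xi_n(\bx)+\br)\setminus R(\bx,\xi_n(\bx))$ into $2d$ thin slabs of Lebesgue volume at most $\max_i r_i$ and control the $\mu$-measure via H\"older's inequality with exponent $q$, yielding the exponent $s=1-1/q$. The only cosmetic difference is that you apply H\"older slab by slab, while the paper applies it once to the whole annulus and then uses $(2d)^s\le 2d$; the resulting bound and constant $c_1=2d\|h\|_q$ are identical.

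One small caveat worth tightening: in the degenerate case where some component of $\xi_n(\bx)-\br$ is negative, the lower bound is not quite \emph{trivial} — one must still check that $r_{n,1}\cdots r_{n,d}\le c_1\max_i r_i^s$. This follows because in that case $R(\bx,\xi_n(\bx))$ itself has $j$-th side of length $2\xi_n(\bx)_j<2r_j$, so H\"older applied to $R(\bx,\xi_n(\bx))$ gives $r_{n,1}\cdots r_{n,d}=\mu(R(\bx,\xi_n(\bx)))\le \|h\|_q(2r_j)^s\le 2\|h\|_q\max_i r_i^s\le c_1\max_i r_i^s$; equivalently, your slab covering of the inner shell $R(\bx,\xi_n(\bx))\setminus R(\bx,\xi_n(\bx)-\br)=R(\bx,\xi_n(\bx))$ still works once the slabs are defined with the natural truncation. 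This is a one-line fix and does not affect the soundness of the argument.
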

\begin{proof}
	We prove the first inequality only, as the second one follows similarly. For any measurable set $ F\subset [0,1]^d $, by H\"older's inequality, we have
		\begin{equation}\label{eq:Lq norm cs}
			\mu(F)=\int\chi_F\dif\mu=\int\chi_F\cdot h\dif\lm^d\le \|h\|_q\cdot\lm^d(F)^{1-1/q}=\|h\|_q\cdot\lm^d(F)^{s}.
		\end{equation}
		Since the annulus $ R(\bx,\xi_n(\bx)+\br)\setminus R(\bx,\xi_n(\bx)) $ is contained in $ 2d $ hyperrectangles, each of them having volume less than $ \max_{1\le i\le d}r_i $, by \eqref{eq:Lq norm cs} we have
	\[\begin{split}
		\mu(R(\bx,\xi_n(\bx)+\br))&= \mu(R(\bx,\xi_n(\bx)))+\mu(R(\bx,\xi_n(\bx)+\br)\setminus R(\bx,\xi_n(\bx)))\\
		&\le r_{n,1}\cdots r_{n,d}+\|h\|_q\cdot \lm^d(R(\bx,\xi_n(\bx)+\br)\setminus R(\bx,\xi_n(\bx)))^s\\
		&\le r_{n,1}\cdots r_{n,d}+\|h\|_q \cdot 2d\cdot\max_{1\le i\le d}r_i^s.\qedhere
	\end{split}\]
\end{proof}

We state and prove some estimates on the measure of $ \hat E_n $ that are necessary for the proof of the divergence part of Theorem \ref{t:main}.
\begin{lem}\label{l:measure of hat En}
	Let $ B\subset [0,1]^d $ be a ball. Then, for any large $ n $,
	\[\frac{1}{2}\mu(B)\cdot r_{n,1}\cdots r_{n,d}\le\mu(B\cap \hat E_n)\le 2\mu(B)\cdot r_{n,1}\cdots r_{n,d}.\]
\end{lem}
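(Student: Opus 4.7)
Following the Vitali-type covering idea from Proposition \ref{p:converges implies zero measure}, but scaled so that Lemma \ref{l:simple lemma s} yields sharp two-sided local information inside each cover cell. For each $n$ with $\br_n\neq\mathbf 0$, set $\epsilon_n=n^{-C_0}$ for a constant $C_0>2d/s$ to be fixed below, and partition $[0,1]^d$ by the axis-parallel grid $\mathcal G_n$ of disjoint translates of the box with side lengths $2\epsilon_n r_{n,1},\dots,2\epsilon_n r_{n,d}$. Let $\mathcal G_n^-(B)$ and $\mathcal G_n^+(B)$ denote the families of cells that are respectively contained in $B$ and meet $B$; their unions $U_n^-\subset B\subset U_n^+$ sandwich $B$, and $\mu(U_n^{\pm})\to\mu(B)$ by $\mu\ll\lm^d$ together with $\mu(\partial B)=0$.

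\textbf{Reduction to mixing.} Each cell $R=R(\bx_R,\epsilon_n\br_n)\in\mathcal G_n$, with $\bx_R$ its centre, satisfies
\[R\cap T^{-n}R(\bx_R,\xi_n(\bx_R)-2\epsilon_n\br_n)\subset R\cap\hat E_n\subset R\cap T^{-n}R(\bx_R,\xi_n(\bx_R)+2\epsilon_n\br_n)\]
by Lemma \ref{l:simple lemma s} with $t=\epsilon_n$. Every rectangle in sight lies in $\mathcal C_1$, since any hyperrectangle has $\mc(\partial\cdot)\le 2d$. Exponential mixing combined with Lemma \ref{l:measure of enlarged ball} (applied with $\max_i r_i\le 2\epsilon_n$) therefore yields the per-cell estimate
\[\mu(R\cap T^{-n}R(\bx_R,\xi_n(\bx_R)\pm 2\epsilon_n\br_n))=\mu(R)(r_{n,1}\cdots r_{n,d}+O(\epsilon_n^s))+O(e^{-\tau n}r_{n,1}\cdots r_{n,d}).\]
Summing the upper (resp.\ lower) inequality over $\mathcal G_n^+(B)$ (resp.\ $\mathcal G_n^-(B)$) and bounding $\#\mathcal G_n^{\pm}(B)\le C\epsilon_n^{-d}(r_{n,1}\cdots r_{n,d})^{-1}$ produces
\[\mu(B\cap\hat E_n)=\mu(U_n^{\pm})\cdot r_{n,1}\cdots r_{n,d}+O\bigl(\mu(U_n^{\pm})\epsilon_n^s+\epsilon_n^{-d}e^{-\tau n}\bigr),\]
with signs chosen consistently on each side. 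Under the Section \ref{s:divergence} reduction $r_{n,i}>n^{-2}$ (so $r_{n,1}\cdots r_{n,d}>n^{-2d}$), the choice $C_0>2d/s$ makes $\epsilon_n^s=o(r_{n,1}\cdots r_{n,d})$, while the exponential $e^{-\tau n}$ defeats any polynomial power of $\epsilon_n^{-d}$. In the limit this gives $\mu(B\cap\hat E_n)=(1+o(1))\mu(B)\,r_{n,1}\cdots r_{n,d}$, considerably stronger than the claimed factor-$2$ bound.

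\textbf{Main obstacle.} The lower bound hides a subtlety: Lemma \ref{l:simple lemma s} with the minus sign is only informative when $l_n(\bx_R)\ge 2\epsilon_n$, and on the exceptional set $\{l_n<2\epsilon_n\}\subset\{\bx:\mu(R(\bx,2\epsilon_n\br_n))>r_{n,1}\cdots r_{n,d}\}$ the density $h$ must be atypically large. A standard strong-maximal argument using $h\in L^q$ forces $\mu(\{l_n<2\epsilon_n\})\to 0$, so this set contributes only $o(\mu(B))$ for $n$ large (depending on $B$), and the lower bound survives. The remaining delicate point is the simultaneous control of $\epsilon_n^s$ and $\epsilon_n^{-d}e^{-\tau n}$: shrinking $\epsilon_n$ too slowly wrecks the first error, shrinking it too fast wrecks the second, and the polynomial lower bound $r_{n,i}>n^{-2}$ built into Section \ref{s:divergence} is precisely what opens a wide polynomial window in which both errors can be killed at once.
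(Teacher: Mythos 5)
Your proof follows the same plan as the paper's own proof: cover $B$ with small cells, apply Lemma~\ref{l:simple lemma s} on each cell, then combine Lemma~\ref{l:measure of enlarged ball} with the exponential mixing property and sum. The differences are in the bookkeeping. You take anisotropic grid cells with side lengths $2\epsilon_n r_{n,i}$ (so that the scaling factor in Lemma~\ref{l:simple lemma s} is the uniform constant $t=\epsilon_n$), choose the polynomial rate $\epsilon_n=n^{-C_0}$ with $C_0>2d/s$, and sandwich $B$ between inner and outer unions of cells; the paper instead partitions $B$ into isotropic cubes of \emph{exponentially} small radius $r=e^{-\tau n/(2d)}$, for which $t=\max_i(r/r_{n,i})$ is still exponentially small thanks to $r_{n,i}>n^{-2}$. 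Both choices sit inside the admissible window you describe, so the two arguments are essentially equivalent, and your version in fact gives the sharper asymptotics $\mu(B\cap\hat E_n)=(1+o(1))\mu(B)\,r_{n,1}\cdots r_{n,d}$, which the lemma does not require.

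The ``main obstacle'' you flag is, however, not an obstacle in the paper's set-up, and the strong-maximal detour is unnecessary. The lower bound in Lemma~\ref{l:measure of enlarged ball} is valid without restriction: writing $\mu(R(\bx,\xi_n(\bx)-\br))=r_{n,1}\cdots r_{n,d}-\mu(\text{annulus})$ where the annulus is $R(\bx,\xi_n(\bx))\setminus R(\bx,\xi_n(\bx)-\br)$, one covers this annulus by $2d$ slabs (two per coordinate direction, of thickness $\min(r_i,\xi_n(\bx)_i)\le r_i$ and other side lengths $\le 1$), so $\lm^d(\text{annulus})\le 2d\max_i r_i$ and the H\"older bound $\mu(\text{annulus})\le c_1\max_i r_i^s$ holds whether or not the shrunken rectangle $R(\bx,\xi_n(\bx)-\br)$ is empty. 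When it is empty, this forces $r_{n,1}\cdots r_{n,d}\le c_1\max_i r_i^s$, so the stated lower bound is vacuously true (both sides nonpositive). Hence the per-cell lower estimate can be applied uniformly to every cell, with no separate treatment of $\{l_n<2\epsilon_n\}$, which is exactly what the paper does.
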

\begin{proof}

	If $ \br_n=\mathbf{0} $, then $ \mu(B\cap \hat E_n)=0 $ and the lemma follows.

	Now, suppose $ \br_n\ne\mathbf{0} $. Recall that a ball here is with respect to the maximum norm and thus corresponds to a Euclidean hypercube.
	Denote the radius of $ B $ by $ r_0 $. Partition $ B $ into $ r_0^d\cdot e^{\tau n/2} $ balls with radius $ r:=e^{-\tau n/(2d)} $. The collection of these balls is denoted by
	\[\{B(\bx_i,r):1\le i\le r_0^d e^{\tau n/2}\}.\]
	 Let $ t=\max_{1\le i\le d}(r/r_{n,i}) $. By Lemma \ref{l:simple lemma s}, we have
	\begin{equation}\label{eq:inclusion1}
		B\cap \hat E_n=\bigcup_{i\le r_0^de^{\tau n/2}}B(\bx_i,r)\cap \hat E_n\supset \bigcup_{i\le r_0^de^{\tau n/2}}B(\bx_i,r)\cap T^{-n} R(\bx_i,\xi_n(\bx_i)-2t\br_n).
	\end{equation}
	Applying the exponential mixing property and Lemma \ref{l:measure of enlarged ball}, we have
	\begin{align}
		\mu(B\cap\hat E_n)&\ge \sum_{i\le r_0^de^{\tau n/2}}\mu(B(\bx_i,r)\cap T^{-n} R(\bx_i,\xi_n(\bx_i)-2t\br_n))\notag\\
		&\ge \sum_{i\le r_0^de^{\tau n/2}}\big(\mu (B(\bx_i,r))-ce^{-\tau n}\big)\mu(R(\bx_i,\xi_n(\bx_i)-2t\br_n))\notag\\
		&\ge \sum_{i\le r_0^de^{\tau n/2}}\big(\mu (B(\bx_i,r))-ce^{-\tau n}\big)(r_{n,1}\cdots r_{n,d}-c_1(2t)^s)\notag\\
		&=(\mu(B)-cr_0^de^{-\tau n/2})(r_{n,1}\cdots r_{n,d}-c_1(2t)^s),\label{eq:hat En lower bound}
	\end{align}
where we use $ \lim_{n\to\infty} |\br_n|=0<1 $ (see equation \eqref{eq:rn to 0}) in the third inequality.
	Since $ \br_n\ne\mathbf{0} $, by our assumption on the sequence $ (\br_n) $ we have $ r_{n,i}\ge n^{-2} $ for $ 1\le i\le d $. Hence,
	\[t=\max_{1\le i\le d}\frac{r}{r_{n,i}}\le n^2e^{-2\tau n/d}.\]
	 Substituting the upper bound for $ t $ to \eqref{eq:hat En lower bound}, we deduce that for all large $ n $,
	\[\mu(B\cap \hat E_n)\ge \frac{1}{2}\mu(B)\cdot r_{n,1}\cdots r_{n,d}.\]

	The upper estimation for $ \mu(\hat E_n) $ can be proved by replacing \eqref{eq:inclusion1} with
	\[
	B\cap \hat E_n=\bigcup_{i\le r_0^de^{\tau n/2}}B(\bx_i,r)\cap \hat E_n\subset \bigcup_{i\le r_0^de^{\tau n/2}}B(\bx_i,r)\cap T^{-n} R(\bx_i,\xi_n(\bx_i)+2t\br_n).\qedhere\]
\end{proof}

\subsubsection{Estimating the measure of $ B\cap \hat E_m\cap \hat E_n $ with $ m<n $}\label{s:correlation}
\

We proceed to estimate the correlations of the sets $ \hat E_n $. Let $ m,n\in \N $ with $ m<n $. If $ \br_m=\textbf{0} $ or $ \br_n=\textbf{0} $, then $ \mu(\hat E_m)=\mu(\hat E_n)=0 $. Thus, for any ball $ B $, we have
\[\mu(B\cap \hat E_m\cap \hat E_n)=0.\]

Here and below, we assume that neither $ \br_m $ nor $ \br_n $ is $ \textbf{0} $. Then, according to the assumptions at the beginning of Section \ref{s:divergence}, we have $ r_{m,i}> m^{-2} $ and $ r_{n,i}>n^{-2} $ for $ 1\le i\le d $.

The next lemma is needed to show that the sets in question satisfy the bounded property $ \pro $.
\begin{lem}[{\cite[Lemma 3.2.38]{Federer} and \cite[Proposition 3.3.5]{Krantz}}]\label{l:boundary of C1 image}
	Suppose that $ f\colon\R^{d}\to\R^d $ is $ C^1 $ and the boundary of $ A $ is included in a $ C^1 $ piecewise embedded compact submanifold of codimension one. Then,
	\[\mc(\partial (f(A)))\le \Big(\sup_{\bx\in A}\|D_{\bx}f\|\Big)^{d-1}\mc(\partial A).\]
\end{lem}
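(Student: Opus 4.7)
The plan is to reduce this Lipschitz-scaling estimate for upper Minkowski content to the analogous (and standard) scaling for $(d-1)$-dimensional Hausdorff measure on rectifiable sets.

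The first step is to gather the easy topological/metric facts. Since $f\in C^1$ on the compact closure $\overline A$, the mean value inequality applied to segments in $\overline A$ gives that $f|_{\overline A}$ is Lipschitz with constant $L:=\sup_{\bx\in A}\|D_\bx f\|$. Continuity of $f$ yields the inclusion $\partial(f(A))\subseteq f(\partial A)$, so it suffices to bound $M^{*(d-1)}(f(\partial A))$. By hypothesis, $\partial A$ is contained in a compact $C^1$ piecewise embedded submanifold of codimension one, hence $\partial A$ is closed $(d-1)$-rectifiable; since Lipschitz images of rectifiable sets are rectifiable, $f(\partial A)$ is closed $(d-1)$-rectifiable as well.

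The second step is the quantitative Lipschitz estimate. Applying the area formula of Federer to $f$ restricted to the rectifiable set $\partial A$, together with the pointwise bound on the tangential Jacobian
\[J_{d-1}^{T} f(\bx)\le \|D_\bx f\|^{d-1}\le L^{d-1},\]
one obtains
\[\mathcal H^{d-1}(f(\partial A))\le \int_{\partial A} J_{d-1}^{T} f \, d\mathcal H^{d-1}\le L^{d-1}\,\mathcal H^{d-1}(\partial A).\]
Finally, as recalled right before Definition~\ref{d:C1 map}, for a closed $(d-1)$-rectifiable set $S\subset\R^d$ the Minkowski content $M^{d-1}(S)$ exists and equals $\mathcal L^{d-1}(S)$ (equivalently $\mathcal H^{d-1}(S)$) up to a universal dimensional constant $c_d$ that is the \emph{same} on both sides of the desired inequality. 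Hence
\[M^{*(d-1)}(\partial(f(A)))\le M^{*(d-1)}(f(\partial A))=c_d\,\mathcal H^{d-1}(f(\partial A))\le L^{d-1} c_d\,\mathcal H^{d-1}(\partial A)=L^{d-1}M^{*(d-1)}(\partial A),\]
which is the claimed bound.

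The main obstacle is bookkeeping rather than genuine difficulty: one must justify that the $(d-1)$-Minkowski content (a limit of $d$-dimensional volumes of $\epsilon$-tubes, divided by $\epsilon$) really coincides, up to the dimensional constant, with $(d-1)$-dimensional Hausdorff measure simultaneously on $\partial A$ and on $f(\partial A)$. This is precisely the content of the Federer/Krantz references cited in the statement of the lemma, which handle rectifiable sets in $\R^d$; once this equivalence is invoked, the inequality follows from the Lipschitz-image bound for $\mathcal H^{d-1}$, which is the standard application of the area formula sketched above. An alternative, more elementary route that avoids the area formula is to cover $\partial A$ by $N_\epsilon$ balls of radius $\epsilon/L$ (so that $f$ maps them into balls of radius $\epsilon$), use this to cover $f(\partial A)(\epsilon)$ by balls of radius $2\epsilon$, and then relate $N_\epsilon$ to $M^{*(d-1)}(\partial A)$; this gives the same scaling $L^{d-1}$ but with a worse multiplicative constant, which is why the approach via rectifiability is preferable when a sharp inequality is needed.
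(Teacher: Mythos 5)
The paper does not give a proof of this lemma; it is quoted directly from Federer and Krantz--Parks, so there is no in-paper argument to compare yours against. Assessing your argument on its own, there is a genuine gap at the very first reduction. You assert that continuity of $f$ gives the inclusion $\partial(f(A))\subseteq f(\partial A)$. This is false for a general $C^1$ map: critical values coming from the interior of $A$ can land on $\partial(f(A))$. Concretely, take $d=2$, $f(x,y)=(x^2,y)$, and $A=(-1,1)\times(0,1)$. Then $f(A)=[0,1)\times(0,1)$, so $\partial(f(A))$ is the entire boundary of the unit square, while
\[
f(\partial A) = \big(\{1\}\times[0,1]\big)\cup\big([0,1]\times\{0\}\big)\cup\big([0,1]\times\{1\}\big)
\]
misses the side $\{0\}\times[0,1]$. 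Worse, the inequality you actually rely on, $\mc(\partial(f(A)))\le\mc(f(\partial A))$, fails here: $\mc(\partial(f(A)))=8$ whereas $\mc(f(\partial A))=6$. (The lemma's conclusion, $8\le 2\cdot 12$, still happens to hold in this example, so this is not a counterexample to the lemma, only to your chain of inequalities.) To repair the argument you would have to additionally control the image of the interior critical set $\{\bx\in A : \det D_{\bx}f = 0\}$ that contributes to $\partial(f(A))$, and it is not obvious that this piece is dominated by $L^{d-1}\mc(\partial A)$ without further work. In the paper's actual application (Lemma~\ref{l:boundary estimation}), $f$ is $T^{-n}|_{T^nJ_n}$, a $C^1$ diffeomorphism onto its image, for which $\partial(f(A))=f(\partial A)$; under an injectivity/open-map hypothesis your argument is sound, but that hypothesis is not stated and your justification (``continuity'') is wrong.

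Two smaller issues. First, the mean value inequality applied to ``segments in $\overline A$'' requires $\overline A$ to be convex, so $\sup_{\bx\in A}\|D_{\bx}f\|$ need not equal the Lipschitz constant of $f$ on $\overline A$. This is harmless for the area-formula route, where you only need $f$ to be Lipschitz on a compact neighbourhood together with the pointwise tangential Jacobian bound $J^{T}_{d-1}f(\bx)\le\|D_{\bx}f\|^{d-1}$, but the sentence as written is not correct. Second, in invoking the Minkowski-content/Hausdorff-measure identity on the two rectifiable sets you should note that the constant (which, with the paper's normalization $\mc(A)=\limsup_{\epsilon\to 0^+}\lm^d(A(\epsilon))/\epsilon$, equals $2$) is the same on both sides and cancels; your write-up says this, and that step is fine once both $\partial A$ and $f(\partial A)$ are verified to be closed and $(d-1)$-rectifiable with finite measure.
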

Now, we use the estimate form Lemma \ref{l:boundary of C1 image} to deduce the following lemma. Recall the notation $ \mathcal F_n $ in Remark \ref{r:1}.
\begin{lem}\label{l:boundary estimation}
	Let $ T\colon[0,1]^d\to [0,1]^d $ be a piecewise expanding map. For any $ n\ge 1 $, any $ J_n\in \mathcal F_n $, and any hyperrectangles $ R_1, R_2\subset [0,1]^d $, we have
	\[\mc\big(\partial (J_n\cap R_1\cap T^{-n}R_2)\big)\le 4d+ \frac{K}{1-L^{-(d-1)}},\]
	where $ L $ and $ K $ are constants given in Definition \ref{d:C1 map}.
\end{lem}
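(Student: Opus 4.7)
The plan is to first peel off the outer hyperrectangle $R_1$ via subadditivity of $\mc$ on boundaries, and then control $\mc(\partial(J_n\cap T^{-n}R_2))$ by induction on $n$, harvesting the contraction factor $L^{-(d-1)}$ at each step by passing to the inverse branches of $T$ and applying Lemma~\ref{l:boundary of C1 image}.

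\emph{Step 1 (subadditivity strip-off).} For any measurable $A,B$ the inclusion $\partial(A\cap B)\subseteq \partial A\cup \partial B$ yields the subadditivity
\[
\mc(\partial(J_n\cap R_1\cap T^{-n}R_2))\le \mc(\partial R_1)+\mc(\partial(J_n\cap T^{-n}R_2)).
\]
A hyperrectangle in $[0,1]^d$ has boundary contained in at most $2d$ coordinate-hyperplane pieces of $(d-1)$-volume at most $1$, so $\mc(\partial R_1)\le 2d$.

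\emph{Step 2 (setting up the induction).} Define
\[
\alpha_n:=\sup\bigl\{\mc(\partial(J_n\cap T^{-n}R)):J_n\in\mathcal F_n,\ R\subseteq[0,1]^d\text{ a hyperrectangle}\bigr\},
\]
with the convention $\mathcal F_0=\{[0,1]^d\}$, so that $\alpha_0\le 2d$. For $n\ge 1$ decompose $J_n=U_{i_0}\cap T^{-1}J'_{n-1}$ with $J'_{n-1}\in\mathcal F_{n-1}$, and set $G_{n-1}:=J'_{n-1}\cap T^{-(n-1)}R$, so $J_n\cap T^{-n}R=U_{i_0}\cap T^{-1}G_{n-1}$.

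\emph{Step 3 (one inductive step via the inverse branch).} Let $\phi:=(T|_{\overline{U_{i_0}}})^{-1}$; by \eqref{eq:expanding constant}, $\|D_\by\phi\|\le L^{-1}$. A boundary point of $U_{i_0}\cap T^{-1}G_{n-1}$ either lies on $\partial U_{i_0}$, or lies in the open set $U_{i_0}$ where $T$ is a $C^1$ diffeomorphism; in the latter case the point is in $\phi(\partial G_{n-1}\cap T(\overline{U_{i_0}}))$. Hence
\[
\partial(U_{i_0}\cap T^{-1}G_{n-1})\subseteq \partial U_{i_0}\cup \phi\bigl(\partial G_{n-1}\cap T(\overline{U_{i_0}})\bigr).
\]
Realising the second piece as (part of) the boundary of $\phi(G_{n-1}\cap T(\overline{U_{i_0}}))$ and invoking Lemma~\ref{l:boundary of C1 image} applied to a $C^1$ extension of $\phi$,
\[
\mc\bigl(\phi(\partial G_{n-1}\cap T(\overline{U_{i_0}}))\bigr)\le L^{-(d-1)}\mc(\partial G_{n-1})\le L^{-(d-1)}\alpha_{n-1}.
\]
Together with $\mc(\partial U_{i_0})\le K$ from Definition~\ref{d:C1 map}(ii), this gives the recursion
\[
\alpha_n\le K+L^{-(d-1)}\alpha_{n-1}.
\]

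\emph{Step 4 (solving the recursion and concluding).} Iterating,
\[
\alpha_n\le K\sum_{k=0}^{n-1}L^{-k(d-1)}+2d\,L^{-n(d-1)}\le 2d+\frac{K}{1-L^{-(d-1)}},
\]
and combining with Step~1 delivers the claimed bound $4d+K/(1-L^{-(d-1)})$.

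\emph{Main obstacle.} The delicate point is Step~3: Lemma~\ref{l:boundary of C1 image} bounds the Minkowski content of the boundary of an \emph{image}, not of the image of a generic $(d-1)$-dimensional slice. One must package $\phi(\partial G_{n-1}\cap T(\overline{U_{i_0}}))$ as a portion of $\partial\phi(G_{n-1}\cap T(\overline{U_{i_0}}))$, and carefully segregate the contribution of $\phi(T(\partial U_{i_0}))=\partial U_{i_0}$ (which is already absorbed in the additive $K$-term), so as to avoid the spurious appearance of the factor $(\sup_{U_{i_0}}\|D_\bx T\|)^{d-1}$, for which Definition~\ref{d:C1 map} provides no upper bound. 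The $C^1$ piecewise-submanifold hypothesis on $\partial U_i$ in Definition~\ref{d:C1 map}(ii) is what makes this decomposition permissible.
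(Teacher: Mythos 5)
Your proof is correct for $d\geq 2$ and takes a genuinely different route from the paper. The paper keeps the cylinder piece and the shrinking targets separate: it first proves by induction that $\mc(\partial J_{n+1})\le \mc(\partial J_n)+KL^{-(d-1)n}$, realising $\partial J_{n+1}$ via the diffeomorphism $T^n|_{J_n}$ (whose inverse contracts by $L^{-n}$), and only at the very end adds in the contributions of $\partial R_1$ and $T^{-n}R_2$. You instead peel from the front, writing $J_n=U_{i_0}\cap T^{-1}J'_{n-1}$ and using a single inverse branch $\phi=(T|_{\overline{U_{i_0}}})^{-1}$, while bundling the hyperrectangle $R$ into the inductive quantity $\alpha_n$ from the start; this gives the cleaner one-step recursion $\alpha_n\le K+L^{-(d-1)}\alpha_{n-1}$, whereas the paper's contraction factor decays geometrically in $n$. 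Both approaches solve to the same geometric series and the same final bound, and your version has the mild advantage of needing only the $L^{-1}$ contraction of one inverse branch, rather than the $L^{-n}$ contraction of $T^{-n}|_{T^nJ_n}$.

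Two points worth flagging. First, you do not treat $d=1$: there $L^{-(d-1)}=1$, your geometric series diverges, and the stated bound $4d+K/(1-L^{-(d-1)})$ is not finite. The paper handles $d=1$ by a direct argument ($J_n\cap R_1\cap T^{-n}R_2$ is an interval, so $\mc\le 2$), and this finite bound is what is actually needed for the bounded property $(\mathbf{P1})$ in one dimension; you should add that case. Second, regarding your "Main obstacle": the proposed fix of packaging $\phi(\partial G_{n-1}\cap T(\overline{U_{i_0}}))$ inside $\partial\phi(G_{n-1}\cap T(\overline{U_{i_0}}))$ and then "segregating" $\partial U_{i_0}$ does not quite work, because Lemma~\ref{l:boundary of C1 image} bounds the whole boundary of the image from above and one cannot subtract a sub-piece afterwards. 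The correct and cleaner justification (which is also what the paper implicitly relies on when applying Lemma~\ref{l:boundary of C1 image} to the set $A_2\subset\partial U_{i_n}$, a slice rather than a boundary) is the underlying scaling principle: for a closed $(d-1)$-rectifiable set $S$, the Minkowski content $\mc$ agrees with $\lm^{d-1}$ up to a fixed constant, and Lipschitz images with constant $\lambda$ scale $\mathcal H^{d-1}$ by at most $\lambda^{d-1}$; applied to $S=\partial G_{n-1}\cap T(\overline{U_{i_0}})\subset\partial G_{n-1}$ and $\lambda=L^{-1}$ this gives $\mc(\phi(S))\le L^{-(d-1)}\mc(\partial G_{n-1})$ directly, with no spurious term. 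With these two adjustments your argument is fully rigorous.
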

\begin{proof}
	Suppose $ d=1 $. In this case both $ R_1 $ and $ R_2 $ are intervals. Since each $ U_i $ is connected, this means that $ U_i $ is an open interval. Definition \ref{d:C1 map} (i) indicates that $ T $ is strictly monotonic and continuous on each $ U_i $. It is easily verified that $ J_n\cap R_1\cap T^{-n}R_2 $ is either empty or an interval. Hence,
	\[\mc(\partial(J_n\cap R_1\cap T^{-n}R_2))\le 2.\]

	Now, suppose $ d\ge 2 $. We claim that for any $ J_n\in\mathcal F_n $,
	\begin{equation}\label{eq:boundary of cylinder}
		\mc(\partial J_n)\le K\frac{1-L^{-(d-1)(n-1)}}{1-L^{-(d-1)}}.
	\end{equation}
	We proceed by induction. For $ n=1 $, this is implied by Definition \ref{d:C1 map} (ii).

	Assume that \eqref{eq:boundary of cylinder} holds for some $ n\ge 1 $. We will prove that \eqref{eq:boundary of cylinder} holds for $ n+1 $. Write $ J_{n+1}=J_{n}\cap T^{-n}(U_{i_n}) $, where $ J_n\in\mathcal F_n $ and $ 1\le i_n\le P $. Since $ T $ can be extended to a $ C^1 $ map on each $ \overline{U_i} $, the map $ T^n|_{J_{n}}\colon J_{n}\to T^nJ_{n} $ can also be extended to a $ C^1 $ map on $ \overline{J_n} $. Moreover, the inverse of $ T^n|_{J_{n}} $, denoted by $ T^{-n}|_{T^nJ_n} $, exists and is also a $ C^1 $ map. By \eqref{eq:expanding constant}, we have
	\begin{equation}\label{eq:inverse bounded derivative}
		\sup_{\bx\in J_n}\|D_\bx(T^{-n}|_{T^nJ_n})\|\le L^{-n}.
	\end{equation}
	Since $ T^{-n}|_{T^nJ_n} $ is invertible and $ C^1 $, we have
	\begin{align}
		\partial J_{n+1}&=\partial (J_{n}\cap T^{-n}(U_{i_n}))\notag\\
		&=\partial (T^{-n}|_{T^nJ_{n}}(T^nJ_{n}\cap U_{i_n}))\notag\\
		&=T^{-n}|_{T^nJ_{n}}(\partial(T^nJ_{n}\cap U_{i_n})).\label{eq:boundary inverse}
	\end{align}
	Clearly, $ \partial(T^nJ_{n}\cap U_{i_n})\subset \partial (T^nJ_n)\cup \partial U_{i_n} $. The boundary $ \partial(T^nJ_{n}\cap U_{i_n}) $ can be decomposed into a union of two closed sets $ A_1=\partial(T^nJ_{n}\cap U_{i_n})\cap\partial (T^nJ_n) $ and $ A_2=\partial(T^nJ_{n}\cap U_{i_n})\cap\partial U_{i_n} $. Substituting this decomposition into \eqref{eq:boundary inverse}, we obtain
	\[\begin{split}
		\partial J_{n+1}=T^{-n}|_{T^nJ_{n}}(A_1\cup A_2)&= T^{-n}|_{T^nJ_{n}}(A_1)\cup T^{-n}|_{T^nJ_{n}}(A_2)\\
		&\subset T^{-n}|_{T^nJ_{n}}(\partial(T^nJ_n))\cup T^{-n}|_{T^nJ_{n}}(A_2)\\
		&=\partial J_n\cup T^{-n}|_{T^nJ_{n}}(A_2).
	\end{split}\]
	Since both $ \partial J_n $ and $ A_2 $ are $ C^1 $ piecewise embedded compact submanifolds of codimension one, it follows from Lemma \ref{l:boundary of C1 image} and \eqref{eq:inverse bounded derivative} that
	\begin{align}
		\mc(\partial J_{n+1})&\le\mc(\partial J_n\cup T^{-n}|_{T^nJ_{n}}(A_2))\notag\\
		&\le \mc(\partial J_n)+L^{-(d-1)n}\mc(A_2)\notag\\
		&\le \mc(\partial J_n)+L^{-(d-1)n}\mc(\partial U_{i_n})\label{eq:boundary upper bound}\\
		&\le \mc(\partial J_n)+KL^{-(d-1)n}\notag.
	\end{align}
	Using the inductive hypothesis, we  prove the claim.

	By the same reason as \eqref{eq:boundary upper bound}, we have
	\[\begin{split}
		\mc(\partial (J_n\cap R_1\cap T^{-n}R_2))&\le \mc(\partial R_1)+\mc(\partial (J_n\cap T^{-n}R_2))\\
		&\le 2d+\mc(\partial J_n)+L^{-(d-1)n}\mc(\partial R_2)\\
		&\le 2d+K\frac{1-L^{-(d-1)(n-1)}}{1-L^{-(d-1)}}+2dL^{-(d-1)n}\\
		&\le 4d+\frac{K}{1-L^{-(d-1)}},
	\end{split}\]
	which completes the proof of the lemma.
\end{proof}

\begin{rem}\label{r: relax condition on T}
	When $ d=1 $, the condition on $ T $ stated in Definition \ref{d:C1 map} (i) can be relaxed to that $ T $ is strictly monotonic and continuous on each $ U_i $. We can draw the same conclusion as Lemma \ref{l:boundary estimation} under this weaker condition.
\end{rem}
Since the sets under consideration satisfy the bounded property $ \pro $, we have the following first correlation estimate.

\begin{lem}\label{l:estimations on correlations 1}
	Let $ B\subset [0,1]^d $ be a ball. Then, there exists a constant $ c_2 $ such that for any sufficiently large integers $ m $ and $ n $ with $ 8d^2\log n/(s\tau)\le m<n $, we have
	\begin{equation*}
		\mu(B\cap \hat E_m\cap\hat E_n)\le c_2\mu(B)(r_{m,1}\cdots r_{m,d}+e^{-\tau (n-m)})\cdot r_{n,1}\cdots r_{n,d}.
	\end{equation*}
\end{lem}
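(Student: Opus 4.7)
The plan is to partition $B$ into a finite collection of pairwise disjoint balls $\{B(y_j, r)\}_{j \in \mathcal I}$ of radius $r := e^{-\tau m/(2d)}$, of cardinality $N \le r_0^d\, e^{\tau m/2}$ (where $r_0$ denotes the radius of $B$). Lemma \ref{l:simple lemma s} then gives, on each small ball,
\[
B(y_j, r) \cap \hat E_m \cap \hat E_n \subset B(y_j, r) \cap T^{-m}\bigl(R^m_j \cap T^{-(n-m)} R^n_j\bigr),
\]
where $R^m_j := R(y_j, \xi_m(y_j) + 2 t_m \br_m)$ and $R^n_j := R(y_j, \xi_n(y_j) + 2 t_n \br_n)$ with $t_m := r/\min_i r_{m,i}$ and $t_n := r/\min_i r_{n,i}$. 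Writing $G_j := R^m_j \cap T^{-(n-m)} R^n_j$, the task reduces to bounding $\mu(B(y_j, r) \cap T^{-m} G_j)$.

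The main obstacle is that $G_j$ need not belong to the collection $\mathcal C_1$: the preimage $T^{-(n-m)} R^n_j$ is folded through the $P^{n-m}$ branches of $T^{n-m}$, and its boundary can have unbounded $(d-1)$-Minkowski content. To circumvent this I would further decompose $G_j = \bigsqcup_{J \in \mathcal F_{n-m}} G_{j,J}$ with $G_{j,J} := R^m_j \cap J \cap T^{-(n-m)} R^n_j$; by Lemma \ref{l:boundary estimation} each $G_{j,J}$ does lie in $\mathcal C_1$. Since $B(y_j, r)$ (a hypercube with $\mc(\partial) = 2d$) also lies in $\mathcal C_1$, applying exponential mixing at level $m$ to each pair $(B(y_j, r), G_{j,J})$ and summing over $J$ gives the clean bound
\[
\mu\bigl(B(y_j, r) \cap T^{-m} G_j\bigr) \le \bigl(\mu(B(y_j, r)) + c\, e^{-\tau m}\bigr)\, \mu(G_j).
\]
The crucial point is that the error terms $c e^{-\tau m}\mu(G_{j,J})$ telescope to $ce^{-\tau m}\mu(G_j)$, thereby avoiding any factor of $\#\mathcal F_{n-m}$ that would otherwise dominate.

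Next, since the hyperrectangles $R^m_j, R^n_j$ satisfy $\pro$, exponential mixing at level $n-m$ applied directly yields $\mu(G_j) \le \mu(R^m_j)\mu(R^n_j) + c\mu(R^n_j)\, e^{-\tau(n-m)}$, and Lemma \ref{l:measure of enlarged ball} supplies the uniform bounds $\mu(R^m_j) \le r_{m,1}\cdots r_{m,d} + c_1(2t_m)^s =: A_m$ and $\mu(R^n_j) \le r_{n,1}\cdots r_{n,d} + c_1(2t_n)^s =: A_n$. Summing over $j$, using $\sum_j \mu(B(y_j, r)) \le \mu(B)$ and $Nc e^{-\tau m} \le c r_0^d e^{-\tau m/2}$, one obtains
\[
\mu(B \cap \hat E_m \cap \hat E_n) \le \bigl(\mu(B) + c r_0^d e^{-\tau m/2}\bigr)\bigl[A_m A_n + c A_n e^{-\tau(n-m)}\bigr].
\]
A routine check using $r_{m,i} \ge m^{-2}$, $r_{n,i} \ge n^{-2}$, and the hypothesis $m \ge 8d^2\log n/(s\tau)$ shows $A_m \le 2 r_{m,1}\cdots r_{m,d}$ and $A_n \le 2 r_{n,1}\cdots r_{n,d}$ for $n$ sufficiently large; and $cr_0^d e^{-\tau m/2} \le \mu(B)$ for $m$ sufficiently large (depending on $B$). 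Assembling these estimates yields the target bound with $c_2$ depending only on $c$.
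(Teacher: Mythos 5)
Your proposal is correct and follows essentially the same route as the paper: partition $B$ into balls of radius $e^{-\tau m/(2d)}$, localize via Lemma \ref{l:simple lemma s}, then apply exponential mixing twice — first at scale $m$ after refining by $\mathcal F_{n-m}$ (so that Lemma \ref{l:boundary estimation} guarantees the sets satisfy $\pro$, with the error telescoping over the partition), then at scale $n-m$ to the resulting hyperrectangle intersection — and finish with Lemma \ref{l:measure of enlarged ball} and the size conditions $r_{m,i}>m^{-2}$, $r_{n,i}>n^{-2}$, $m\ge 8d^2\log n/(s\tau)$. The only quibble is that the claimed factor $2$ in $A_n\le 2\,r_{n,1}\cdots r_{n,d}$ should really be $1+c_1 2^s$, since $t_n^s$ is only guaranteed to be at most $n^{-2d}\le r_{n,1}\cdots r_{n,d}$ rather than much smaller; but this is immaterial since the constant is absorbed into $c_2$.
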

\begin{proof}
	Let $ r_0 $ be the radius of $ B $. Partition $ B $ into $ r_0^de^{\tau m/2} $ balls with radius $ r:=e^{-\tau m/(2d)} $. The collection of these balls is denoted by
	\[\{B(\bx_i,r):1\le i\le r_0^de^{\tau m/2}\}.\]

	Let $ t_m=\max_{1\le i\le d}(r/r_{m,i}) $ and $ t_n=\max_{1\le i\le d}(r/r_{n,i}) $. For each ball $ B(\bx_i,r) $, by Lemma \ref{l:simple lemma s} we have
	\[B(\bx_i,r)\cap \hat E_m\cap \hat E_n\subset B(\bx_i,r)\cap T^{-m}R(\bx_i,\xi_m(\bx_i)+2t_m\br_m)\cap T^{-n}R(\bx_i,\xi_n(\bx_i)+2t_n\br_n).\]
	Hence,
	\begin{equation}\label{eq:B(x,r) cap Em cap En}
		\begin{split}
			&B\cap\hat E_m\cap\hat E_n\\
			&\subset \bigcup_{i\le r_0^de^{\tau m/2}}B(\bx_i,r)\cap T^{-m}R(\bx_i,\xi_m(\bx_i)+2t_m\br_m)\cap T^{-n}R(\bx_i,\xi_n(\bx_i)+2t_n\br_n).
		\end{split}
	\end{equation}
	For notational simplicity, write
	\[R_i(m)=R(\bx_i,\xi_m(\bx_i)+2t_m\br_m)\quad\text{and}\quad R_i(n)=R(\bx_i,\xi_n(\bx_i)+2t_n\br_n).\]
	By Lemma \ref{l:boundary estimation}, the sets $ J_{n-m}\cap R_i(m)\cap T^{-(n-m)}R_i(n) $ satisfy the bounded property $ \pro $.
	Applying the exponential mixing property, we have
	\begin{align}
		&\mu\big(B(\bx_i,r)\cap T^{-m}R_i(m)\cap T^{-n}R_i(n)\big)\label{eq:exponential mixing twice}\\
		=&\sum_{J_{n-m}\in\mathcal F_{n-m}} \mu\big(B(\bx_i,r)\cap T^{-m}(J_{n-m}\cap R_i(m)\cap T^{-(n-m)}R_i(n))\big)\notag\\
		\le &\sum_{J_{n-m}\in\mathcal F_{n-m}}\big(\mu(B(\bx_i,r))+ce^{-\tau m}\big) \mu(J_{n-m}\cap R_i(m)\cap T^{-(n-m)}R_i(n))\notag\\
		=&\big(\mu(B(\bx_i,r))+ce^{-\tau m}\big) \mu(R_i(m)\cap T^{-(n-m)}R_i(n))\notag\\
		\le& \big(\mu(B(\bx_i,r))+ce^{-\tau m}\big)\big(\mu(R_i(m))+ce^{-\tau (n-m)}\big) \mu(R_i(n))\label{eq:three balls cap}.
	\end{align}
	By Lemma \ref{l:measure of enlarged ball}, \eqref{eq:three balls cap} is majorized by
	\[ \big(\mu(B(\bx_i,r))+ce^{-\tau m}\big)(r_{m,1}\cdots r_{m,d}+c_1(2t_m)^s+ce^{-\tau (n-m)})(r_{n,1}\cdots r_{n,d}+c_1(2t_n)^s).\]
	Summing over $ i\le r_0^de^{\tau m/2} $, we have
	\begin{equation}\label{eq:1}
		\begin{split}
			\mu(B\cap\hat E_m\cap \hat E_n)\le &\big(\mu(B)+cr_0^de^{-\tau m/2}\big)(r_{m,1}\cdots r_{m,d}+c_1(2t_m)^s+ce^{-\tau (n-m)})\\
			&\cdot(r_{n,1}\cdots r_{n,d}+c_1(2t_n)^s).
		\end{split}
	\end{equation}
	Since $ r_{m,i}> m^{-2} $ and $ r_{n,i}> n^{-2} $, it follows from $ r=e^{-\tau m/(2d)} $ that
	\[t_m=\max_{1\le i\le d} \frac{e^{-\tau m/(2d)}}{r_{m,i}}\le m^{2}e^{-\tau m/(2d)},\]
	and from $ 8d^2\log n/(s\tau)\le m<n  $ and $ s=1-1/q<1 $ that
	\[t_n^s=\bigg(\max_{1\le i\le d} \frac{e^{-\tau m/(2d)}}{r_{n,i}}\bigg)^s\le n^{2s}e^{-s\tau m/(2d)} \le n^{2s}e^{-4d\log n}\le n^{2-4d}\le n^{-2d}\le r_{n,1}\cdots r_{n,d}.\]
	Thus, there exists a constant $ c_2 $ such that for any sufficiently large integers $ m $ and $ n $ with $ 8d^2\log n/(s\tau)\le m<n $, we have
	\begin{equation*}
		\mu(B\cap \hat E_m\cap\hat E_n)\le c_2\mu(B)(r_{m,1}\cdots r_{m,d}+e^{-\tau (n-m)})\cdot r_{n,1}\cdots r_{n,d}.\qedhere
	\end{equation*}
\end{proof}

Kirsebom, Kunde and Persson \cite[Lemma 4.2]{KKP21} obtained some similar estimates of the correlations of  the sets $ \hat E_n $ when $ d=1 $. As hinted in their paper, these estimates are not sufficient to conclude the divergence part of Theorem \ref{t:main}. To get around this barrier, we need the following lemma, which helps us  estimate the correlations of the sets $ \hat E_n $ for the case $ m\le 8d^2\log n/(s\tau) $.

\begin{lem}\label{l:measure of intersection}
	Let $ T\colon[0,1]^d\to [0,1]^d $ be a piecewise expanding map. Then, there exists a constant $ c_3 $ such that for any integers $ m $ and $ n $ with $ m\le 8d^2\log n/(s\tau) $, and any hyperrectangles $ R_1,R_2, R_3\subset [0,1]^d $, we have
	\[\mu(R_1\cap T^{-m}R_2\cap T^{-n}R_3)\le \big(\mu(R_1\cap T^{-m}R_2)+c_3e^{-\tau n/2}\big)\mu(R_3).\]
\end{lem}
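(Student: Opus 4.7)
The plan is to bring the bound into a form where the exponential mixing property of Definition \ref{d:exponential mixing} can be applied, with $T^{-n}R_3$ playing the role of the ``mixed'' factor. The obstacle is that the natural choice $F = R_1\cap T^{-m}R_2$ is not obviously a member of any collection $\mathcal C$ with uniformly bounded Minkowski content, because $T^{-m}R_2$ is a preimage under $m$ iterations of a piecewise expanding map and its boundary can become intricate as $m$ grows.

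To fix this I would partition $R_1\cap T^{-m}R_2$ according to the Markov-type refinement $\mathcal F_m$ of Remark \ref{r:1}:
\[
R_1\cap T^{-m}R_2 \;=\; \bigsqcup_{J_m\in\mathcal F_m} J_m\cap R_1\cap T^{-m}R_2,
\]
the union being essentially disjoint because the $U_i$ are pairwise disjoint. By Lemma \ref{l:boundary estimation} (applied with $m$ in place of $n$, with $R_1$ as the first hyperrectangle, and $R_2$ as the second), each piece $F_{J_m}:=J_m\cap R_1\cap T^{-m}R_2$ satisfies the bounded property $\pro$ uniformly in $J_m$ and $m$, so the family $\{F_{J_m}\}$ lies in some fixed collection $\mathcal C$ to which Definition \ref{d:exponential mixing} applies.

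Applying the exponential mixing inequality to each $F_{J_m}$ with $G=R_3$ (which lies in $\mathcal C_1$, hence in a legitimate $\mathcal C$) at time $n$ gives
\[
\mu(F_{J_m}\cap T^{-n}R_3) \le \bigl(\mu(F_{J_m})+c e^{-\tau n}\bigr)\mu(R_3).
\]
Summing over the at most $P^m$ elements $J_m\in\mathcal F_m$ and using the disjointness to collapse the $\mu(F_{J_m})$ terms yields
\[
\mu(R_1\cap T^{-m}R_2\cap T^{-n}R_3)\;\le\;\bigl(\mu(R_1\cap T^{-m}R_2)+c\,P^{m}e^{-\tau n}\bigr)\mu(R_3).
\]

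It then remains to absorb the factor $P^m$ into a slightly weaker exponential. Under the hypothesis $m\le 8d^2\log n/(s\tau)$ we have
\[
P^{m}\le P^{8d^2\log n/(s\tau)}=n^{8d^2\log P/(s\tau)},
\]
so $cP^{m}e^{-\tau n}= c\, n^{O(1)} e^{-\tau n}$, which is bounded by $c_3 e^{-\tau n/2}$ for a suitable absolute constant $c_3$ (chosen large enough to handle small $n$ as well, since $P^{m}e^{-\tau n/2}$ is a continuous function of $n$ that tends to $0$). This gives the desired inequality. The only subtle point in the proposal is the first step: verifying that the refined pieces $F_{J_m}$ genuinely satisfy the boundary estimate of Lemma \ref{l:boundary estimation} so that a single mixing constant applies uniformly to all of them; once that is in place, the rest is a counting/summation exercise.
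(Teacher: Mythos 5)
Your proposal is correct and follows essentially the same route as the paper: decompose over the Markov refinement $\mathcal F_m$, invoke Lemma \ref{l:boundary estimation} to certify the uniform boundary bound $\pro$, apply exponential mixing at time $n$ with $G=R_3$, sum using disjointness, and absorb the $\#\mathcal F_m\le P^m\le n^{8d^2\log P/(s\tau)}$ factor into $e^{-\tau n/2}$. The only point the paper does not belabor but you flag explicitly is that the polynomial-in-$n$ prefactor can be uniformly bounded by a constant times $e^{\tau n/2}$, which is routine.
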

\begin{proof}
	It is clear that
	\[\mu(R_1\cap T^{-m}R_2\cap T^{-n}R_3)=\sum_{J_m\in\mathcal F_m}\mu(J_m\cap R_1\cap T^{-m}R_2\cap T^{-n}R_3).\]
	By Lemma \ref{l:boundary estimation}, the sets $ J_m\cap R_1\cap T^{-m}R_2 $ satisfy the bounded property $ \pro $. Applying the exponential mixing property, we have
	\begin{align}
		&\sum_{J_m\in\mathcal F_m}\mu(J_m\cap R_1\cap T^{-m}R_2\cap T^{-n}R_3)\notag\\
		\le &\sum_{J_m\in\mathcal F_m}\big(\mu(J_m\cap R_1\cap T^{-m}R_2)+ce^{-\tau n}\big)\mu(R_3)\notag\\
		=&\big(\mu(R_1\cap T^{-m}R_2)+ \# \mathcal F_m\cdot ce^{-\tau n}\big)\mu(R_3).\label{eq:exponentiall decay intersection of three balls}
	\end{align}
	Since $ m\le 8d^2\log n/(s\tau) $, we have
	\[ \# \mathcal F_m\le P^m\le n^{8d^2\log P/(s\tau)}.\]
	Applying this upper bound for $  \#  \mathcal F_m $ to \eqref{eq:exponentiall decay intersection of three balls}, we complete the proof.
\end{proof}

We are ready to estimate the correlations of the sets $ \hat E_n $ for the case $ m\le 8d^2\log n/(s\tau) $.

\begin{lem}\label{l:estimations on correlations 2}
	Let $ T\colon[0,1]^d\to [0,1]^d $ be a piecewise expanding map. Let $ B\subset [0,1]^d $ be a ball. Then, there exists a constant $ c_4 $ such that for all integers $ m $ and $ n $ with $ m\le 8d^2\log n/(s\tau) $, we have
	\[\mu(B\cap\hat E_m\cap \hat E_n)\le c_4(\mu(B)+e^{-\tau m/2})\cdot r_{m,1}\cdots r_{m,d}\cdot r_{n,1}\cdots r_{n,d}.\]
	In particular, if $ m\ge -\frac{2\log \mu(B)}{\tau} $, then
	\[\mu(B\cap\hat E_m\cap \hat E_n)\le 2c_4\mu(B)\cdot r_{m,1}\cdots r_{m,d}\cdot r_{n,1}\cdots r_{n,d}.\]
\end{lem}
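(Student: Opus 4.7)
My approach closely mirrors the proof of Lemma~\ref{l:estimations on correlations 1}, with one decisive substitution: where that proof applies exponential mixing a second time to the intersection $R_i(m)\cap T^{-(n-m)}R_i(n)$ (which in the present regime $m\le 8d^2\log n/(s\tau)$ would yield only the decay $e^{-\tau(n-m)}$, useless when $m$ is close to $n$), I will instead invoke Lemma~\ref{l:measure of intersection}. The latter provides the stronger decay $e^{-\tau n/2}$ at the cost of a factor $|\mathcal F_m|\le P^m$, which the hypothesis $m\le 8d^2\log n/(s\tau)$ makes polynomial in $n$ and therefore absorbed into the constant $c_3$.

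Letting $r_0$ be the radius of $B$, I partition $B$ into $N=r_0^d e^{\tau m/2}$ balls $B(\bx_i,r)$ of radius $r=e^{-\tau m/(2d)}$. Lemma~\ref{l:simple lemma s} gives
\[B(\bx_i,r)\cap\hat E_m\cap\hat E_n\subset B(\bx_i,r)\cap T^{-m}R_i(m)\cap T^{-n}R_i(n),\]
with $R_i(m)=R(\bx_i,\xi_m(\bx_i)+2t_m\br_m)$, $R_i(n)=R(\bx_i,\xi_n(\bx_i)+2t_n\br_n)$, $t_m=r/\min_i r_{m,i}$ and $t_n=r/\min_i r_{n,i}$. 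Lemma~\ref{l:boundary estimation} certifies that all relevant intersections satisfy the bounded property $\pro$, so Lemma~\ref{l:measure of intersection} yields
\[\mu\bigl(B(\bx_i,r)\cap T^{-m}R_i(m)\cap T^{-n}R_i(n)\bigr)\le\bigl(\mu(B(\bx_i,r)\cap T^{-m}R_i(m))+c_3e^{-\tau n/2}\bigr)\mu(R_i(n)).\]
A subsequent single application of exponential mixing (Definition~\ref{d:exponential mixing}) bounds $\mu(B(\bx_i,r)\cap T^{-m}R_i(m))\le(\mu(B(\bx_i,r))+ce^{-\tau m})\mu(R_i(m))$, and Lemma~\ref{l:measure of enlarged ball} controls $\mu(R_i(m))$ and $\mu(R_i(n))$. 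Summation over $i$ together with the identity $Nce^{-\tau m}=cr_0^de^{-\tau m/2}$ then produces the leading term $C(\mu(B)+e^{-\tau m/2})\cdot r_{m,1}\cdots r_{m,d}\cdot r_{n,1}\cdots r_{n,d}$.

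The hard part will be the error bookkeeping. First, to obtain $\mu(R_i(m))\le 2r_{m,1}\cdots r_{m,d}$ and $\mu(R_i(n))\le 2r_{n,1}\cdots r_{n,d}$, one must check, using $r_{m,i}>m^{-2}$ and $r_{n,i}>n^{-2}$ together with the hypothesis $m\le 8d^2\log n/(s\tau)$, that $t_m^s\le m^{2s}e^{-s\tau m/(2d)}\lesssim r_{m,1}\cdots r_{m,d}$ and $t_n^s\le n^{2s}e^{-s\tau m/(2d)}\lesssim r_{n,1}\cdots r_{n,d}$; the latter is the more delicate estimate and is exactly where the precise upper bound on $m$ is used. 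Second, the residual error $Nc_3e^{-\tau n/2}\mu(R_i(n))\le 2c_3r_0^de^{\tau m/2-\tau n/2}\cdot r_{n,1}\cdots r_{n,d}$ must be dominated by $e^{-\tau m/2}\cdot r_{m,1}\cdots r_{m,d}\cdot r_{n,1}\cdots r_{n,d}$: since $e^{\tau m}\le n^{8d^2/s}$ by the hypothesis while $r_{m,1}\cdots r_{m,d}\ge m^{-2d}$ is at worst polylogarithmic in $n$, the exponential decay $e^{-\tau n/2}$ easily swallows everything for $n$ sufficiently large. Finally, the ``in particular'' clause is immediate: if $m\ge-2\log\mu(B)/\tau$ then $e^{-\tau m/2}\le\mu(B)$, so $(\mu(B)+e^{-\tau m/2})\le 2\mu(B)$, yielding the sharper bound $2c_4\mu(B)\cdot r_{m,1}\cdots r_{m,d}\cdot r_{n,1}\cdots r_{n,d}$.
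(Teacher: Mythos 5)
Your proposal has a genuine gap at the step where you try to control $\mu(R_i(n))$ via Lemma~\ref{l:measure of enlarged ball}. You partition $B$ at scale $r = e^{-\tau m/(2d)}$ (depending on $m$), so that $t_n = r/\min_i r_{n,i} \le n^2 e^{-\tau m/(2d)}$, and you claim the hypothesis $m \le 8d^2\log n/(s\tau)$ guarantees $t_n^s \lesssim r_{n,1}\cdots r_{n,d}$. But that hypothesis is an \emph{upper} bound on $m$, and the inequality $n^{2s}e^{-s\tau m/(2d)} \lesssim r_{n,1}\cdots r_{n,d}$ needs $m$ to be \emph{large} (it would require something like $m \gtrsim \log n$). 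For small $m$ — say $m=1$, which is allowed — $e^{-s\tau m/(2d)}$ is a constant while $r_{n,1}\cdots r_{n,d}$ can be as small as roughly $n^{-2d}$, so $c_1(2t_n)^s$ swamps $r_{n,1}\cdots r_{n,d}$ and the bound $\mu(R_i(n))\le 2r_{n,1}\cdots r_{n,d}$ fails. The relevant cases for this lemma are exactly those where $m$ is small relative to $n$, and your partition radius is simply too coarse to make $t_n$ small.

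The paper resolves this tension with a two-stage partition. It first partitions $B$ at the fine scale $r_1 = e^{-\tau n/(4d)}$ (depending on $n$), which makes $u_n = r_1/\min_i r_{n,i} \le n^2 e^{-\tau n/(4d)}$ exponentially small regardless of $m$, so Lemma~\ref{l:measure of enlarged ball} then does give $\mu(R_i(n)) \lesssim r_{n,1}\cdots r_{n,d}$. But this introduces the opposite problem: summing the mixing error $ce^{-\tau m}$ over the $r_0^d e^{\tau n/4}$ fine balls overcounts catastrophically when $m$ is small. The key idea you are missing is the re-grouping step (paper's inclusion \eqref{eq:subset well estimate}): the union $\bigcup_i B(\bx_i, r_1)\cap T^{-m}R(\bx_i,\xi_m(\bx_i)+2u_m\br_m)$ is shown to sit inside $\bigcup_j B(\bz_j, r_2)\cap T^{-m}R(\bz_j,\xi_m(\bz_j)+6v_m\br_m)$ over a \emph{coarse} grid of scale $r_2 = e^{-\tau m/(2d)}$, and the mixing estimate is then applied only to that coarse cover, where the cumulative error $c r_0^d e^{\tau m/2}\cdot e^{-\tau m} = cr_0^d e^{-\tau m/2}$ is under control. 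Your instinct that Lemma~\ref{l:measure of intersection} supplies the crucial extra decay is right, but without the fine-then-coarse re-grouping you cannot close the estimate.
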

\begin{proof}
	Denote by $ r_0 $ the radius of $ B $. Partition $ B $ into $ r_0^d e^{\tau n/4} $ balls with radius $ r_1:=e^{-\tau n/(4d)} $. The collection of these balls is denoted by
	\[
	\{B(\bx_i,r_1):1\le i\le r_0^de^{\tau n/4}\}.\]

	Let $ u_m=\max_{1\le i\le d}(r_1/r_{m,i}) $ and $ u_n=\max_{1\le i\le d}(r_1/r_{n,i}) $. For each ball $ B(\bx_i,r_1) $, by Lemma \ref{l:simple lemma s} we have
	\[B(\bx_i,r_1)\cap\hat E_m\cap\hat E_n\subset B(\bx_i,r_1)\cap T^{-m}R(\bx_i,\xi_m(\bx_i)+2u_m\br_m)\cap T^{-n}R(\bx_i,\xi_n(\bx_i)+2u_n\br_n).\]
	 Hence,
	\begin{equation}\label{eq:B(x,r) cap Em cap En s}
		B\cap\hat E_m\cap\hat E_n\subset \bigcup_{i\le r_0^de^{\tau n/4}}G_i\cap T^{-n}R(\bx_i,\xi_n(\bx_i)+2u_n\br_n),
	\end{equation}
	where
	\[G_i=B(\bx_i,r_1)\cap T^{-m}R(\bx_i,\xi_m(\bx_i)+2u_m\br_m).\]
 By \eqref{eq:B(x,r) cap Em cap En s} and Lemmas \ref{l:measure of enlarged ball} and \ref{l:measure of intersection} we have
\begin{align}
	\mu(B\cap\hat E_m\cap\hat E_n)&\le \sum_{i\le r_0^de^{\tau n/4}}\mu(G_i\cap T^{-n}R(\bx_i,\xi_n(\bx_i)+2u_n\br_n))\notag\\
	&\le\sum_{i\le r_0^de^{\tau n/4}} \big(\mu(G_i)+c_3e^{-\tau n/2}\big)\mu(R(\bx_i,\xi_n(\bx_i)+2u_n\br_n))\notag\\
	&\le\sum_{i\le r_0^de^{\tau n/4}} \big(\mu(G_i)+c_3e^{-\tau n/2}\big)(r_{n,1}\dots r_{n,d}+c_1(2u_n)^s)\notag\\
	&=\Big(\sum_{i\le r_0^de^{\tau n/4}}\mu(G_i)+c_3r_0^de^{-\tau n/4}\Big)(r_{n,1}\dots r_{n,d}+c_1(2u_n)^s)\label{eq:summation need to estimate}.
\end{align}

Now, we are in a position to bound the summation in \eqref{eq:summation need to estimate} from above. Partition $ [0,1]^d $ into $ r_0^de^{\tau m/2} $ balls with radius $ r_2:=e^{-\tau m/(2d)} $. Denote the collection of these balls by
\[\{B(\bz_j,r_2):1\le j\le r_0^de^{\tau m/2}\}.\]

Let $ v_m=\max_{1\le i\le d}(r_2/r_{m,i}) $. By definition, one has $ v_m\ge u_m $. For any $ \bx\in B(\bz_j,r_2)\cap G_i $, we have
\[T^m\bx\in R(\bx_i,\xi_m(\bx_i)+2u_m\br_m)\quad\text{and}\quad \bx_i\in B(\bz_j,r_1+r_2)\subset R(\bz_j,(u_m+v_m)\br_m).\]
Hence,
\begin{align}
	T^m\bx\in R(\bx_i,\xi_m(\bx_i)+2u_m\br_m)&\subset R(\bz_j,\xi_m(\bx_i)+3u_m\br_m+v_m\br_m)\notag\\
	&\subset R(\bz_j,\xi_m(\bx_i)+4v_m\br_m)\label{eq:Tmx-zj}.
\end{align}

On the other hand, since
\[ R(\bz_j,\xi_m(\bx_i)-2v_m\br_m)\subset R(\bz_j,\xi_m(\bx_i)-(u_m+v_m)\br_m)\subset R(\bx_i,\xi_m(\bx_i)),\]
we have
\[\mu(R(\bz_j,\xi_m(\bx_i)-2v_m\br_m))\le\mu(R(\bx_i,\xi_m(\bx_i)))=r_{m,1}\cdots r_{m,d},\]
which means that
\[l_m(\bz_j)\ge l_m(\bx_i)-2v_m.\]
 This together with \eqref{eq:Tmx-zj} gives that
\[T^m\bx_i\in  R(\bz_j,\xi_m(\bx_i)+4v_m\br_m)\subset R(\bz_j,\xi_m(\bz_j)+6v_m\br_m),\]
and further that
\[B(\bz_j,r_2)\cap B(\bx_i,r_1)\cap T^{-m}R(\bx_i,\xi_m(\bx_i)+2u_m\br_m)\subset B(\bz_j,r_2)\cap T^{-m}R(\bz_j,\xi_m(\bz_j)+6v_m\br_m).\]
Therefore,
\begin{equation}\label{eq:subset well estimate}
	\begin{split}
	&\bigcup_{i\le r_0^de^{\tau n/4}} B(\bx_i,r_1)\cap T^{-m}R(\bx_i,\xi_m(\bz_j)+2u_m\br_m)\\
		&\hspace{2 em}\subset\bigcup_{j\le r_0^de^{\tau m/2}} B(\bz_j,r_2)\cap T^{-m}R(\bz_j,\xi_m(\bz_j)+6v_m\br_m).
	\end{split}
\end{equation}
With this inclusion and noting that the top of \eqref{eq:subset well estimate} is a disjoint union, we have
\begin{align}
	&\sum_{i\le r_0^de^{\tau n/4}}\mu(B(\bx_i,r_1)\cap T^{-m}R(\bx_i,\xi_m(\bx_i)+2u_m\br_m))\notag\\
	&=\mu\Big(\bigcup_{i\le r_0^de^{\tau n/4}} B(\bx_i,r_1)\cap T^{-m}R(\bx_i,\xi_m(\bx_i)+2u_m\br_m)\Big)\notag\\
	&\le \mu\Big(\bigcup_{j\le r_0^de^{\tau m/2}}B(\bz_j,r_2)\cap T^{-m}R(\bz_j,\xi_m(\bx_i)+6v_m\br_m)\Big)\notag\\
	&\le \sum_{j\le r_0^de^{\tau m/2}} \mu(B(\bz_j,r_2)\cap T^{-m}R(\bz_j,\xi_m(\bx_i)+6v_m\br_m)).\label{eq:second to the last step}
\end{align}
By the exponential mixing property and Lemma \ref{l:measure of enlarged ball},
\begin{align}
	&\sum_{j\le r_0^de^{\tau m/2}} \mu(B(\bz_j,r_2)\cap T^{-m}R(\bz_j,\xi_m(\bx_i)+6v_m\br_m))\notag\\
	&\le \sum_{j\le r_0^de^{\tau m/2}}(\mu(B(\bz_j,r_2))+ce^{-\tau m}) \mu(R(\bz_j,\xi_m(\bx_i)+6v_m\br_m))\notag\\
	&\le \sum_{j\le r_0^de^{\tau m/2}}(\mu(B(\bz_j,r_2))+ce^{-\tau m})(r_{m,1}\cdots r_{m,d}+c_1(6v_m)^s)\notag\\
	&= (\mu(B)+cr_0^de^{-\tau m/2})(r_{m,1}\cdots r_{m,d}+c_1(6v_m)^s).\label{eq:final estimation}
\end{align}
Combining with \eqref{eq:summation need to estimate}, \eqref{eq:second to the last step} and \eqref{eq:final estimation}, we have
\[\begin{split}
	\mu(B\cap \hat E_m\cap\hat E_n)\le &\big( (\mu(B)+cr_0^de^{-\tau m/2})(r_{m,1}\cdots r_{m,d}+c_1(6v_m)^s)+c_3r_0^de^{-\tau n/4}\big)\\
	&\cdot(r_{n,1}\dots r_{n,d}+c_1(2u_n)^s).
\end{split}\]
Note that $ r_{m,i}>m^{-2} $ and $ r_{n,i}>n^{-2} $. It follows from $ r_1=e^{-\tau n/(4d)} $ and $ r_2=e^{-\tau m/(2d)} $ that
\[v_m=\max_{1\le i\le d}\frac{e^{-\tau m/(2d)}}{r_{m,i}}\le m^{2}e^{-\tau m/(2d)}\qquad\text{and}\qquad u_n=\max_{1\le i\le d}\frac{e^{-\tau n/(4d)}}{r_{n,i}}\le n^{2}e^{-\tau n/(4d)}.\]
Hence, there exists a constant $ c_4 $ such that
\[\mu(B\cap \hat E_m\cap \hat E_n)\le c_4(\mu(B)+e^{-\tau m/2})\cdot r_{m,1}\cdots r_{m,d}\cdot r_{n,1}\cdots r_{n,d}.\qedhere\]
\end{proof}
\subsubsection{Completing the proof of Theorem \ref{t:main}}\label{s:full measure}
\

The main ingredient in proving $ \mu(\hat\rc(\{\br_n\}))=1 $ is the use of the following measure-theoretical result \cite[Lemma 7]{BeDiVe06}, which is a generalization of a known result for Lebesgue measure. Although the original result is only valid for doubling measures, it actually holds for all Borel probability measures on $ \R^d $.
\begin{lem}[{\cite[Lemma 7]{BeDiVe06}}]\label{l:density lemma}
	Let $ \nu $ be a Borel probability measure on $ \R^n $. Let $ E $ be a measurable subset of $ \R^d $. Assume that there are constants $ r_0 $ and $ \alpha>0 $ such that for any ball $ B:=B(\bx, r) $ with $ r<r_0 $, we have $ \nu(B\cap E)\ge \alpha\nu(B) $. Then,
	\[\nu(E)=1.\]
\end{lem}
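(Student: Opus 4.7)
The plan is to argue by contradiction: suppose $\nu(E) < 1$, so that $\nu(E^c) > 0$, and derive a clash with the lower-density hypothesis. The main tool will be the Besicovitch differentiation theorem for Radon measures on $\R^d$, which states that for any Borel set $F \subset \R^d$,
\begin{equation*}
\lim_{r \to 0^+} \frac{\nu(B(\bx,r) \cap F)}{\nu(B(\bx,r))} = 1 \quad \text{for } \nu\text{-a.e. } \bx \in F.
\end{equation*}
Since every finite Borel measure on the Polish space $\R^d$ is inner regular and hence Radon, this is applicable to our $\nu$.

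Applying the differentiation theorem with $F = E^c$, I deduce that $\nu$-almost every point of $E^c$ is a density point of $E^c$. Under the contradiction hypothesis $\nu(E^c) > 0$, I may fix one such density point $\bx_0 \in E^c$. Then for some sufficiently small $r < r_0$,
\begin{equation*}
\nu(B(\bx_0, r) \cap E^c) > (1 - \alpha/2)\,\nu(B(\bx_0, r)).
\end{equation*}
On the other hand, the hypothesis applied at $\bx_0$ and radius $r$ gives $\nu(B(\bx_0, r) \cap E) \geq \alpha\,\nu(B(\bx_0, r))$, hence $\nu(B(\bx_0, r) \cap E^c) \leq (1 - \alpha)\,\nu(B(\bx_0, r))$. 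Dividing the two estimates by $\nu(B(\bx_0,r)) > 0$ yields $1 - \alpha/2 < 1 - \alpha$, contradicting $\alpha > 0$. Therefore $\nu(E^c) = 0$, and the lemma follows.

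The subtle point — and where I expect the real work to sit — is justifying that no doubling hypothesis on $\nu$ is needed. In Euclidean space the Besicovitch covering theorem is available for every Radon measure, and the associated Lebesgue--Besicovitch differentiation statement above is a direct consequence; this is precisely what upgrades the original doubling-measure formulation from \cite{BeDiVe06} to arbitrary Borel probability measures. An alternative route would avoid differentiation entirely: cover $E^c$ by balls of radii less than $r_0$ lying inside an open neighbourhood of $E^c$ with small excess $\nu$-measure, extract a countable subcover of bounded multiplicity via Besicovitch's covering theorem, and sum the pointwise bound $\nu(B_i \cap E^c) \leq (1-\alpha)\,\nu(B_i)$ to contradict $\nu(E^c) > 0$. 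I prefer the density-point approach because it isolates the contradiction locally at a single point, making the role of the hypothesis entirely transparent.
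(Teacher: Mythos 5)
Your proof is correct and takes essentially the same route as the paper: both argue by contradiction and apply the Lebesgue--Besicovitch density theorem (the paper cites Mattila, Corollary 2.14) to a density point of $E^c$, obtaining an upper bound on $\nu(B\cap E)/\nu(B)$ that clashes with the hypothesis. Your remark that Besicovitch covering in $\R^d$ makes the density theorem available for arbitrary Radon measures without a doubling assumption is exactly the point the paper's remark preceding the lemma is making, and your use of $1-\alpha/2$ rather than $1-\alpha$ tidies a strict-versus-nonstrict inequality that the paper glosses over.
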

\begin{proof}
	Assume by contradiction that $ \nu(E)<1 $. Then, the complement of $ E $ has positive measure. By the density theorem \cite[Corollary 2.14]{Mattila}, we have
	\[\lim_{r\to 0}\frac{\nu(B(\bx,r)\cap E^c)}{\nu(B(\bx,r))}=1\quad\text{for }\nu\text{-a.e.\,}\bx\in E^c.\]
	For any such $ \bx $, there exists $ r<r_0 $ small enough such that
	\[\nu(B(\bx,r)\cap E^c)\ge (1-\alpha)\nu(B(\bx,r))\]
	or equivalently
	\[\nu(B(\bx,r)\cap E)<\alpha\mu(B(\bx,r)),\]
	which contradicts the assumption.
\end{proof}
The above lemma together with the Paley-Zygmund inequality gives $ \mu(\hat \rc(\{\br_n\}))=1 $.

\begin{lem}\label{l:hat R full measure}
	There exists a constant $ \alpha_1 $ such that for any ball $ B\subset [0,1]^d $, we have
	\[\mu(B\cap \hat\rc(\{\br_n\}))\ge \alpha_1\mu(B).\]
	In particular, $ \mu(\hat\rc(\{\br_n\}))=1 $.
\end{lem}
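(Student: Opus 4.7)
The plan is to apply a Paley--Zygmund-type second-moment inequality (equivalently, the Chung--Erd\H{o}s lemma) to the sequence $(\hat E_n)$ in order to secure a uniform density bound
\[\mu(B \cap \hat\rc(\{\br_n\})) \ge \alpha_1\mu(B),\]
with $\alpha_1$ independent of $B$, after which Lemma \ref{l:density lemma} immediately delivers $\mu(\hat\rc(\{\br_n\})) = 1$. Fix a ball $B \subset [0,1]^d$ and choose an integer $N_0$ with $N_0 \ge -2\log\mu(B)/\tau$ that is also large enough for Lemma \ref{l:measure of hat En} to apply; since $\hat\rc(\{\br_n\}) = \limsup_{n \ge N_0}\hat E_n$, only indices $n \ge N_0$ are relevant.

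The first-moment step is immediate from Lemma \ref{l:measure of hat En}: writing $r_n := r_{n,1}\cdots r_{n,d}$, one has
\[S_M := \sum_{n=N_0}^{M}\mu(B \cap \hat E_n) \in \Big[\tfrac{1}{2}\mu(B)\sum_{n=N_0}^{M}r_n,\ 2\mu(B)\sum_{n=N_0}^{M}r_n\Big],\]
and since $\sum r_n = \infty$, $S_M \to \infty$ as $M \to \infty$; the upper bound also yields $\sum_{n=N_0}^M r_n \le 2S_M/\mu(B)$.

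For the second moment $T_M := \sum_{N_0 \le m < n \le M}\mu(B \cap \hat E_m \cap \hat E_n)$, I split the inner sum at $m = 8d^2(\log n)/(s\tau)$. When $m \le 8d^2(\log n)/(s\tau)$, the choice $N_0 \ge -2\log\mu(B)/\tau$ activates the sharper form of Lemma \ref{l:estimations on correlations 2}, yielding $\mu(B \cap \hat E_m \cap \hat E_n) \le 2c_4\mu(B)\,r_m r_n$. When $m > 8d^2(\log n)/(s\tau)$, Lemma \ref{l:estimations on correlations 1} gives $c_2\mu(B)(r_m + e^{-\tau(n-m)})r_n$; summing the geometric series in $e^{-\tau(n-m)}$ contributes at most $\frac{c_2}{1-e^{-\tau}}\mu(B)\sum_n r_n$. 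Combining both pieces and using $\sum r_n \le 2S_M/\mu(B)$, I obtain
\[T_M \le C\Big(\frac{S_M^2}{\mu(B)} + S_M\Big)\]
for a constant $C$ depending only on $T$ and $\mu$, not on $B$.

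The Paley--Zygmund inequality applied to $\sum_{n=N_0}^M \chi_{\hat E_n}$ on $B$ then gives
\[\mu(B \cap \hat\rc(\{\br_n\})) \ge \limsup_{M\to\infty}\frac{S_M^2}{S_M + 2T_M},\]
and feeding in $T_M \le C(S_M^2/\mu(B) + S_M)$ together with $S_M \to \infty$ produces $\mu(B \cap \hat\rc(\{\br_n\})) \ge \mu(B)/(2C)$; one then takes $\alpha_1 := 1/(2C)$. Lemma \ref{l:density lemma} concludes that $\mu(\hat\rc(\{\br_n\})) = 1$. The principal technical point is to keep the constant $C$ in the second-moment bound genuinely independent of $B$: this is exactly the role of the truncation $N_0 \gtrsim -\log\mu(B)$ together with the dichotomy at $m = 8d^2(\log n)/(s\tau)$, which triggers the sharper form of Lemma \ref{l:estimations on correlations 2} precisely where the naive application would otherwise introduce a $B$-dependent error term.
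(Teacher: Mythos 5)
Your proposal is correct and follows essentially the same route as the paper: a first-moment lower bound from Lemma \ref{l:measure of hat En}, a second-moment bound obtained by splitting the pair correlation sum at $m = 8d^2(\log n)/(s\tau)$ and invoking Lemmas \ref{l:estimations on correlations 1} and \ref{l:estimations on correlations 2}, Paley--Zygmund to get the uniform density bound, and Lemma \ref{l:density lemma} to upgrade to full measure. The only cosmetic difference is that you explicitly truncate the sum at $N_0 \ge -2\log\mu(B)/\tau$ to activate the sharper clause of Lemma \ref{l:estimations on correlations 2}, whereas the paper achieves the same effect by replacing finitely many $\br_n$ with $\mathbf{0}$.
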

\begin{proof}
	Fix a ball $ B\subset [0,1]^d $ with $ \mu(B)>0 $. We can replace at most finitely many $ \br_n $'s by $ \bf 0 $, so that the estimations given in Lemmas \ref{l:measure of hat En}, \ref{l:estimations on correlations 1} and \ref{l:estimations on correlations 2} hold for all $ n\in \N $. This manipulation does not change the property $ \sum_{n\ge 1} r_{n,1}\cdots r_{n,d}=\infty $ and the resulting set is smaller than the original one.

	Let $ N\in\N $ and $ Z_N(x)=\sum_{n=1}^{N}\chi_{B\cap \hat E_n}(x) $. By Lemma \ref{l:measure of hat En}, one has
	\begin{equation}\label{eq:estimation sum N mu hat En}
		\frac{1}{2}\mu(B)\sum_{n=1}^N r_{n,1}\cdots r_{n,d}\le \mathbb E(Z_N)=\sum_{n=1}^N\mu(B\cap \hat E_n)\le 2\mu(B)\sum_{n=1}^N r_{n,1}\cdots r_{n,d}.
	\end{equation}

	On the other hand,
	\begin{align}
		\mathbb E (Z_N^2)&=\sum_{m,n=1}^N\mu(B\cap \hat E_m\cap \hat E_n)\notag\\
		&=2\sum_{n=1}^N\sum_{m=1}^{n-1}\mu(B\cap \hat E_m\cap \hat E_n)+\sum_{n=1}^N\mu(B\cap \hat E_n).\label{eq:second moment}
	\end{align}
	By Lemmas \ref{l:estimations on correlations 1} and \ref{l:estimations on correlations 2},
	\begin{align}
		\sum_{n=1}^N\sum_{m=1}^{n-1}\mu(B\cap \hat E_m\cap \hat E_n)&=\sum_{n=1}^N\sum _{m= \frac{8d^2}{s\tau}\log n}^{n-1}\mu(B\cap \hat E_m\cap \hat E_n)+\sum_{n=1}^N\sum_{m=1}^{\frac{8d^2}{s\tau}\log n-1}\mu(B\cap\hat E_m\cap \hat E_n)\notag\\
		&\le \sum_{n=1}^N\sum _{m= \frac{8d^2}{s\tau}\log n}^{n-1}c_2\mu(B)\cdot r_{n,1}\cdots r_{n,d}\cdot (r_{m,1}\cdots r_{m,d}+e^{-\tau (n-m)})\notag\\
		&\quad+\sum_{n=1}^N\sum_{m=1}^{\frac{8d^2}{s\tau}\log n-1}2c_4\mu(B)\cdot r_{m,1}\cdots r_{m,d}\cdot r_{n,1}\cdots r_{n,d}\notag.
	\end{align}
	Hence, there exist $ c_5 $ and $ c_6 $ such that
	\begin{equation}\label{eq:estimate second moment}
		\begin{split}
			&\sum_{n=1}^N\sum_{m=1}^{n-1}\mu(B\cap \hat E_m\cap \hat E_n)\\
			\le&  c_5\mu(B)\sum_{n=1}^N\sum_{m=1}^{n-1}r_{m,1}\cdots r_{m,d}\cdot r_{n,1}\cdots r_{n,d}+c_6\mu(B)\sum_{n=1}^Nr_{n,1}\cdots r_{n,d}.
		\end{split}
	\end{equation}
	Then by the Paley-Zygmund inequality, for any $ \lambda>0 $, we deduce from \eqref{eq:estimation sum N mu hat En}, \eqref{eq:second moment} and \eqref{eq:estimate second moment} that
	\[\begin{split}
		\mu(Z_N>\lambda \mathbb E(Z_N))&\ge (1-\lambda)^2 \frac{\mathbb E(Z_N)^2}{\mathbb E(Z_N^2)}\\
		&\ge (1-\lambda)\frac{\big(\frac{\mu(B)}{2}\sum_{n=1}^Nr_{n,1}\cdots r_{n,d}\big)^2}{2c_5\mu(B)\big(\sum_{n=1}^N r_{n,1}\cdots r_{n,d}\big)^2+(2+2c_6)\sum_{n=1}^Nr_{n,1}\cdots r_{n,d}}.
	\end{split}\]
	By letting $ N\to\infty $, we get
	\begin{equation}\label{eq:positive measure}
		\begin{split}
			\mu(B\cap \hat\rc(\{\br_n\}))&=\mu(\limsup (B\cap\hat E_n))\ge  \mu(\limsup(Z_N>\lambda\mathbb E(Z_N)))\\
			&\ge\limsup \mu(Z_N>\lambda\mathbb E(Z_N))>(1-\lambda)\frac{\mu(B)}{8c_5}.
		\end{split}
	\end{equation}
Let $ \lambda=1/2 $ and $ \alpha_1=(16c_5)^{-1} $. Then the first point of the lemma holds. The second point follows naturally from Lemma \ref{l:density lemma}.
\end{proof}

As a consequence, we have the following result.
\begin{lem}
	The set $ \rc(\{\br_n\}) $ is of full $ \mu $-measure.
\end{lem}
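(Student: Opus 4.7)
The plan is to bootstrap from the full-measure statement for $\hat\rc(\{\br_n\})$ (Lemma \ref{l:hat R full measure}) to the analogous statement for $\rc(\{\br_n\})$ via a rescaling trick. The obstruction to a direct comparison is that $\hat E_n$ uses the $\mu$-normalized window $R(\bx,l_n(\bx)\br_n)$, and $l_n(\bx)$ may exceed $1$ on the part of $[0,1]^d$ where the density $h$ is small, so no inclusion $\hat E_n\subset E_n$ is available in general.

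For each integer $M\ge 1$, I would apply Lemma \ref{l:hat R full measure} to the rescaled sequence $\br_n/M$. The volume sum $\sum M^{-d}r_{n,1}\cdots r_{n,d}$ still diverges, and the change of the polynomial lower bound from $r_{n,i}>n^{-2}$ to $r_{n,i}/M>n^{-2}/M$ only modifies the constants in Lemmas \ref{l:measure of hat En}--\ref{l:estimations on correlations 2} by an $M$-dependent factor, which is harmless since $M$ is fixed in each application. This yields $\mu(\hat\rc(\{\br_n/M\}))=1$, where the associated normalization $l_n^{(M)}(\bx)$ is defined by $\mu(R(\bx,l_n^{(M)}(\bx)\br_n/M))=r_{n,1}\cdots r_{n,d}/M^d$.

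Next, introduce
\[A_M:=\{\bx\in[0,1]^d : l_n^{(M)}(\bx)\le M\text{ for all sufficiently large }n\}.\]
The defining identity of $l_n^{(M)}$ together with monotonicity of $\mu$ shows that $l_n^{(M)}(\bx)\le M$ is equivalent to $\mu(R(\bx,\br_n))\ge r_{n,1}\cdots r_{n,d}/M^d$. Applying Theorem \ref{t:differentiation theorem} to $h$ with the fixed sequence $\br_n$ gives $\mu(R(\bx,\br_n))/(2^d r_{n,1}\cdots r_{n,d})\to h(\bx)>0$ for $\mu$-a.e.\,$\bx$. Any such point lies in $A_M$ once $M$ is chosen large enough that $M^{-d}<2^{d-1}h(\bx)$; since $(A_M)_{M\ge 1}$ is nested increasing (the equivalent condition weakens as $M$ grows), $\bigcup_M A_M$ has full $\mu$-measure, and in particular $\mu(A_M)\to 1$.

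For the conclusion, on $A_M$ and for $n$ large enough that $l_n^{(M)}(\bx)\le M$, one has $R(\bx,l_n^{(M)}(\bx)\br_n/M)\subset R(\bx,\br_n)$, so $\hat E_n^{(M)}\cap A_M\subset E_n$. Passing to $\limsup$ in $n$ gives $\hat\rc(\{\br_n/M\})\cap A_M\subset \rc(\{\br_n\})$, and therefore
\[\mu(\rc(\{\br_n\}))\ge \mu\bigl(\hat\rc(\{\br_n/M\})\cap A_M\bigr)=\mu(A_M)\longrightarrow 1.\]
The main technical point will be verifying that the entire chain of estimates behind Lemma \ref{l:hat R full measure} remains valid for each rescaled sequence $\br_n/M$; this reduces to tracking an $M$-dependent factor through the various constants, which is routine.
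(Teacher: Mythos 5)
Your proposal is correct and uses the same core ideas as the paper: rescale the radii so that the auxiliary lemma (Lemma \ref{l:hat R full measure}) applies, then invoke the Zygmund differentiation theorem to show that the $\mu$-normalized rectangle $R(\bx,\xi_n(\bx))$ attached to the rescaled sequence is eventually contained in $R(\bx,\br_n)$. The only difference is cosmetic: the paper rescales once by a single divergent sequence $a_n\to\infty$ (with $\sum r_{n,1}\cdots r_{n,d}/a_n^d=\infty$) so that the inclusion holds eventually for a.e.\ $\bx$ in a single pass, whereas you rescale by a fixed constant $M$, introduce the nested good sets $A_M$ on which the inclusion holds, and then let $M\to\infty$; both are valid and require the same re-verification that the estimates underlying Lemma \ref{l:hat R full measure} survive the rescaling (with $M$- or $a_n$-dependent constants and thresholds).
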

\begin{proof}
	Take a sequence of positive numbers $ \{a_n\} $ such that
	\[\sum_{n=1}^{\infty}\frac{r_{n,1}\cdots r_{n,d}}{a_n^d}=\infty\quad\text{and}\quad\lim_{n\to\infty}a_n=\infty.\]
	Let $ \tilde \br_n=a_n^{-1}\br_n $. Applying Lemma \ref{l:hat R full measure} to $ \hat\rc(\{\tilde \br_n\}) $, we have that for $ \mu $-almost every $ \bx $,
	\begin{equation}\label{eq:final equation}
		T^n\bx\in R(\bx,\tilde\xi_n(\bx))\quad\text{for infinitely many }n\in\N,
	\end{equation}
	where $ \tilde \xi_n(\bx):=\tilde l_n(\bx)\tilde \br_n $ and $ \tilde l_n(\bx)\in\R_{\ge 0} $ is the positive number such that $ \mu(R(\bx,\tilde l_n(\bx)\tilde \br_n))=r_{n,1}\cdots r_{n,d}/a_n^d $.

	By Zygmund differentiation theorem (see Theorem \ref{t:differentiation theorem}), for $ \mu $-almost every $ \bx $
	\[\lim_{n\to\infty}\frac{\mu(R(\bx,\br_n))}{2^d\cdot r_{n,1}\cdots r_{n,d}}=h(\bx).\]
	For any such $ \bx $, since $ \lim_{n\to\infty} a_n=\infty $, there exists $ N(\bx) $ such that for all $ n\ge N(\bx) $,
	\[\mu(R(\bx,\tilde\xi_n(\bx)))=r_{n,1}\cdots r_{n,d}/a_n^d<2\cdot 2^dh(\bx)\cdot r_{n,1}\cdots r_{n,d}.\]
	Increasing the integer $ N(\bx) $ if necessary, we have
	\[\mu(R(\bx,\tilde\xi_n(\bx)))<\mu(R(\bx,\br_n))\quad \text{for all }n\ge N(\bx).\]
	Therefore,
	\[R(\bx,\tilde\xi_n(\bx))\subset R(\bx,\br_n).\]
	This together with \eqref{eq:final equation} yields that for $ \mu $-almost every $ \bx $,
	\[T^n\bx\in R(\bx,\br_n)\quad\text{for infinitely many }n\in\N,\]
	which completes the proof.
\end{proof}

\section{Proof of Theorem \ref{t:main s}}\label{s:hyperbola}
Recall that for any $ \bx\in[0,1]^d $ and $ \delta>0 $, $ H(\bx, \delta) $ is given by
\[H(\bx, \delta)=\bx+\{\bz\in [-1,1]^d: z_1\cdots z_d<\delta \}.\]
Let $ (\delta_n) $ be as in Theorem \ref{t:main s}. Then, $ \rc^\times(\{\delta_n\})=\limsup D_n $, where
\[D_n:=\{\bx\in [0,1]^d:T^n\bx\in H(\bx, \delta_n)\}.\]

We summarize some standard facts about the geometric properties of $ H(\bx,\delta) $, which will be used later.
Clearly, for any $ \bx\in[0,1]^d $, there exists a constant $ K_1 $ such that
\begin{equation}\label{eq:boundary H}
	\sup_{0\le \delta\le1}\mc(\partial H(\bx, \delta))\le K_1.
\end{equation}
Moreover, following the idea of \cite[Lemma 4]{LLVZ22}, we can deduce that for any $ 0<\delta<r<1 $, we have
\begin{equation}\label{eq:3}
	\lm^d(B(\bx,r)\cap H(\bx,\delta))=2^d\delta\bigg(\sum_{t=0}^{d-1}\frac{1}{t!}\Big(\log \frac{r^d}{\delta}\Big)^t\bigg).
\end{equation}
For future use, we give some consequence of \eqref{eq:3}. For any $ \delta>0 $, it holds that
\begin{equation}\label{eq:measure H(delta) upper bound}
	\lm^d(H(\bx,\delta))\le d2^d\delta(-\log\delta)^{d-1},
\end{equation}
and that
\begin{equation}\label{eq:measure H(delta) lower bound}
	\lm^d(B(\bx,r)\cap H(\bx,\delta))\ge \delta(-\log\delta)^{d-1}\quad \text{for }r^d>\sqrt \delta.
\end{equation}
In particular, if $ \delta=\delta_n+c_0n^{-3} $ for some $ c_0<2^{d+2} $, then for any large $ n $ with $ \delta_n>n^{-2} $,
\begin{equation}\label{eq:measure H(delta) upper bound s}
	\lm^d(H(\bx,\delta_n+c_0n^{-3}))\le d4^{d}\delta_n(-\log \delta_n)^{d-1}.
\end{equation}

Throughout this section, we fix the collection $ \mathcal C_2 $ of subsets $ F\subset [0,1]^d $ satisfying the bounded property
\[\pros: \qquad\sup_{F\in\mathcal C_2}\mc(\partial F)<2d+K_1+\frac{K}{1-L^{-(d-1)}},\]
where $ K_1 $ is given in \eqref{eq:boundary H}, and $ L $ and $ K $ are given in Definition \ref{d:C1 map}. It is easily seen that hyperrectangles and the hyperboloids $ H(\bx,\delta) $ satisfy the bounded property $ \pros $.

Similar to the proof of Theorem \ref{t:main}, we will frequently use the following lemma.
\begin{lem}\label{l:simple lemma 2}
	Let $ B(\bx,r)\subset[0,1]^d $ be a ball with center $ \bx\in [0,1]^d $ and radius $ r>0 $. Then for any $ F\subset B(\bx,r) $,
	\[F\cap T^{-n}H_n(\bx,\delta_n-2^dr)\subset F\cap D_n\subset F\cap T^{-n}H(\bx,\delta_n+2^dr).\]
\end{lem}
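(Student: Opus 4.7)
The plan is to mirror the argument of Lemma \ref{l:simple lemma} but exploit the multiplicative structure defining the hyperboloid $H(\bx,\delta)$. I would begin by fixing a point $\bz \in F \cap D_n$ and writing $\bz = (z_1,\dots,z_d)$, $\bx = (x_1,\dots,x_d)$, and $T^n\bz = (z_1',\dots,z_d')$. From $\bz \in B(\bx,r)$ one obtains $|z_i - x_i| < r$ for each $i$, and from $T^n\bz \in H(\bz,\delta_n)$ one obtains both $|z_i' - z_i| \le 1$ and $\prod_{i=1}^d |z_i' - z_i| < \delta_n$. The goal is then to control $\prod_{i=1}^d |z_i' - x_i|$ in terms of these quantities.

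The central step is a binomial expansion: writing $z_i' - x_i = (z_i' - z_i) + (z_i - x_i)$ and expanding the product of $d$ such binomials via multilinearity gives
\[\prod_{i=1}^d (z_i' - x_i) = \prod_{i=1}^d (z_i' - z_i) + \sum_{S \subsetneq \{1,\dots,d\}} \prod_{i \in S}(z_i' - z_i) \prod_{i \notin S}(z_i - x_i).\]
The first term has modulus strictly less than $\delta_n$. In each of the remaining $2^d - 1$ summands, at least one factor $z_i - x_i$ appears (with modulus bounded by $r$), while the factors $z_i' - z_i$ have modulus at most $1$. Since $r \le 1$, each such cross term has modulus at most $r$, and the triangle inequality yields $\prod_{i=1}^d |z_i' - x_i| < \delta_n + (2^d - 1)r < \delta_n + 2^d r$. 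Hence $T^n\bz \in H(\bx,\delta_n + 2^d r)$, which gives the second inclusion.

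For the first inclusion I would apply the same algebraic identity to the decomposition $z_i' - z_i = (z_i' - x_i) - (z_i - x_i)$. Starting from $\bz \in F \cap T^{-n}H(\bx,\delta_n - 2^d r)$, which forces $\prod_i |z_i' - x_i| < \delta_n - 2^d r$ together with $|z_i' - x_i| \le 1$, the same cross-term bound of $(2^d - 1)r$ yields $\prod_{i=1}^d |z_i' - z_i| < (\delta_n - 2^d r) + (2^d - 1)r < \delta_n$, so $T^n\bz \in H(\bz,\delta_n)$ and hence $\bz \in D_n$. If $\delta_n \le 2^d r$ then the left-hand set is empty and there is nothing to prove.

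The argument poses no real obstacle; the only point requiring care is the bookkeeping of the $2^d - 1$ cross terms in the multilinear expansion and the uniform bound $|z_i' - z_i|,\, |z_i' - x_i| \le 1$, which keeps every such term controlled by $r$ rather than by some smaller power of $r$ combined with a possibly large coefficient.
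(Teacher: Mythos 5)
Your proof is correct and takes essentially the same approach as the paper: the multilinear expansion of $\prod_i\big((z_i'-z_i)+(z_i-x_i)\big)$ with $|z_i'-z_i|\le 1$ and $|z_i-x_i|<r\le 1$, yielding a cross-term bound of $(2^d-1)r<2^d r$, is exactly the one-line inequality $|z_1'-z_1+r_1|\cdots|z_d'-z_d+r_d|\le|z_1'-z_1|\cdots|z_d'-z_d|+2^d r$ that the paper uses. The first inclusion is handled in the same symmetric way in both, with the paper simply remarking it ``follows similarly.''
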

\begin{proof}
	For any $ \bz\in F $, there exists $ \br=(r_1,\dots,r_d)\in \R^d $ with $ |\br|<r $ such that $ \bz=\bx+\br $. If $ \bz $ also belongs to $ D_n $, then $ T^n\bz\in H(\bz,\delta_n) $. Write $ T^n\bz=(z_1',\dots,z_d') $. Then,
	\[|z_1'-z_1|\cdots|z_d'-z_d|<\delta_n,\]
	which implies
	\begin{align}
		|z_1'-x_1|\cdots|z_d'-x_d|&=|z_1'-z_1+r_1|\cdots|z_d'-z_d+r_d|\notag\\
		&\le |z_1'-z_1|\cdots|z_d'-z_d|+2^d r\notag\\
		&<\delta_n+2^dr\label{eq:4}.
	\end{align}
	That is, $ T^n\bz\in H(\bx, \delta_n+2^dr) $. Therefore,
	$ \bz\in F\cap T^{-n}H(\bx, \delta_n+2^dr) $.

	The first inclusion follows similarly.
\end{proof}

\subsection{Convergence part}
Assume that $ \sum_{n\ge 1}\delta_n(-\log \delta_n)^{d-1}<\infty $. Under the setting of Theorem \ref{t:main s}, we will prove $ \mu(\rc^\times(\{\delta_n\}))=0 $.

Since the density $ h $ of $\mu$ is bounded, the convergence part can be established without using Zygmund differentiation theorem.

\begin{proof}[Proof of Theorem \ref{t:main s}: convergence part]
	Let $ n\ge 1 $. Partition $ [0,1]^d $ into $ (n/2)^{2d} $ balls with radius $ n^{-2} $. Denote  by
	\[
	\{B(\bx_i,n^{-2}):1\le i\le (n/2)^{2d}\}\] the collection of these balls.
	By Lemma \ref{l:simple lemma 2}, we have
	\[D_n=\bigcup_{i\le (n/2)^{2d}}B(\bx_i,n^{-2})\cap D_n\subset\bigcup _{i\le (n/2)^{2d}}B(\bx_i,n^{-2})\cap T^{-n}H(\bx_i,\delta_n+2^dn^{-2}).\]
	Thus, by the exponential mixing property of $ \mu $,
	\[\begin{split}
		\mu(D_n)&\le \sum_{i\le (n/2)^{2d}}\mu(B(\bx_i,n^{-2})\cap T^{-n}H(\bx_i,\delta_n+2^dn^{-2}))\\
		&\le \sum_{i\le (n/2)^{2d}}\big(\mu(B(\bx_i,n^{-2}))+ce^{-\tau n}\big)\mu(H(\bx_i,\delta_n+2^dn^{-2})).
	\end{split}\]
Since $ h\le \mathfrak{c} $, by \eqref{eq:measure H(delta) upper bound} the above sum is majorized by
\[\begin{split}
	&\sum_{i\le (n/2)^{2d}}\big(\mu(B(\bx_i,n^{-2}))+ce^{-\tau n}\big) \mathfrak{c}\cdot d2^d(\delta_n+2^dn^{-2})(-\log(\delta_n+2^dn^{-2}))^{d-1}\\
	=&\mathfrak{c}d2^d(1+c(n/2)^{2d}e^{-\tau n})(\delta_n+2^dn^{-2})(-\log(\delta_n+2^dn^{-2}))^{d-1}\\
	\le & \mathfrak{c}d2^d(1+c(n/2)^{2d}e^{-\tau n})(\delta_n(-\log \delta_n)^{d-1}+2^dn^{-2}(-\log n^{-2})^{d-1}).
\end{split}\]
Therefore,
\[\begin{split}
	\sum_{n=1}^{\infty}\mu(D_n)&\le \sum_{n=1}^{\infty}\mathfrak{c}d2^d(1+c(n/2)^{2d}e^{-\tau n})(\delta_n(-\log \delta_n)^{d-1}+2^dn^{-2}(-\log n^{-2})^{d-1})\\
	&<\infty.
\end{split}\]
By Borel--Cantelli lemma,
\[\mu(\rc^\times(\{\delta_n\}))=0.\qedhere\]
\end{proof}
\subsection{Divergence part}
In the sequel, assume that $ \sum_{n\ge 1}\delta_n(-\log \delta_n)^{d-1}=\infty $. Following the same discussion in Section 3.1, we assume that $ \delta_n $ is either greater than $ n^{-2} $ or equal to $ 0 $. The proof of the divergence part uses a similar but more direct approach to the one used in proving Theorem \ref{t:main}.

Let $ V $ be the open set with $ \mu(V)=1 $ given in Theorem \ref{t:main s}. Then $ V $ can be written as
\[V=\bigcup_{\delta>0}V_\delta,\]
where $ V_{\delta}:=\{\bx\in V: B(\bx,\delta)\in V\} $. It is easily seen that $ V_\delta $ is monotonic with respect to $ \delta $.

This subsection aims at proving the following lemma.
\begin{lem}\label{l:modulo}
	For any $ \delta>0 $ there exists a constant $ \alpha_2=\alpha_2(\delta)>0 $ such that for all $ r<\delta/2 $ and $ \bx\in V_\delta $
	\begin{equation}
		\mu(B(\bx,r)\cap \rc^\times(\{\delta_n\}))\ge\alpha_2\mu(B(\bx,r)).
	\end{equation}
\end{lem}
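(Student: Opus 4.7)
The plan is to adapt the Paley--Zygmund argument in Lemma \ref{l:hat R full measure} directly to the events $D_n$, without introducing any analogue of the auxiliary sets $\hat E_n$. The reason this simplification is possible is geometric: since $\bx\in V_\delta$ and $r<\delta/2$, for every $\by\in B(\bx,r)$ we have $B(\by,\delta/2)\subset B(\bx,\delta)\subset V$, so on a neighborhood of each point of $B(\bx,r)$ the density $h$ is pinched between $\mathfrak{c}^{-1}$ and $\mathfrak{c}$; moreover $h\le\mathfrak{c}$ holds $\lm^d$-a.e.\ on $[0,1]^d$, because $\mu(V^c)=0$.

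\emph{Step 1 (first moment).} I would partition $B(\bx,r)$ into subballs $B(\bx_i,r_0)$ of radius $r_0=e^{-\tau n/(2d)}$, noting that every centre lies in $V_{\delta/2}$. Lemma \ref{l:simple lemma 2} sandwiches $B(\bx_i,r_0)\cap D_n$ between $B(\bx_i,r_0)\cap T^{-n}H(\bx_i,\delta_n\pm 2^d r_0)$; exponential mixing applies because the hyperboloids belong to $\mathcal C_2$ by \eqref{eq:boundary H}, and reduces the estimate to bounding $\mu(H(\bx_i,\delta_n\pm 2^d r_0))$. The upper bound follows from $h\le\mathfrak{c}$ a.e.\ combined with \eqref{eq:measure H(delta) upper bound s}; the lower bound uses the localisation
\[\mu(H(\bx_i,\eta))\ge \mathfrak{c}^{-1}\lm^d\bigl(H(\bx_i,\eta)\cap B(\bx_i,\delta/2)\bigr)\]
followed by \eqref{eq:measure H(delta) lower bound}, which is applicable once $\eta<(\delta/2)^{2d}$. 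Summing over $i$ gives the two-sided bound
\[\mu(B(\bx,r)\cap D_n)\asymp \mu(B(\bx,r))\,\delta_n(-\log\delta_n)^{d-1},\]
with constants depending on $\delta,\mathfrak{c},d,\tau$, valid for all $n$ past a threshold $n_0(\delta)$.

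\emph{Step 2 (second moment).} For $m<n$ the correlations $\mu(B(\bx,r)\cap D_m\cap D_n)$ are bounded by mirroring Lemmas \ref{l:estimations on correlations 1} and \ref{l:estimations on correlations 2} verbatim: partition $B(\bx,r)$ at an intermediate scale $e^{-\tau m/(2d)}$ (resp.\ $e^{-\tau n/(4d)}$), apply Lemma \ref{l:simple lemma 2} twice, and invoke exponential mixing twice. For the regime $m\lesssim\log n$ the $\mathcal F_m$-partition trick of Lemma \ref{l:measure of intersection} replaces the direct mixing step. Lemma \ref{l:boundary estimation} guarantees that the intermediate cylinders $J_{n-m}\cap H\cap T^{-(n-m)}H$ belong to $\mathcal C_2$. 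Summing, and absorbing the tails $e^{-\tau(n-m)}$ and $e^{-\tau m/2}$ via $\sum_k e^{-\tau k/2}<\infty$ exactly as in \eqref{eq:estimate second moment}, one reaches
\[\mathbb{E}(Z_N^2)\le C_\delta\,\mu(B(\bx,r))\Bigl(\sum_{n\le N}\delta_n(-\log\delta_n)^{d-1}\Bigr)^2 + C_\delta\,\mu(B(\bx,r))\sum_{n\le N}\delta_n(-\log\delta_n)^{d-1},\]
where $Z_N=\sum_{n\le N}\chi_{B(\bx,r)\cap D_n}$.

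\emph{Step 3 (Paley--Zygmund and the obstacle).} The two moment estimates and the divergence $\sum_n \delta_n(-\log\delta_n)^{d-1}=\infty$ feed into the Paley--Zygmund inequality with $\lambda=1/2$, producing $\mu(Z_N>\tfrac12\mathbb{E}(Z_N))\ge\alpha_2(\delta)\mu(B(\bx,r))$ for all large $N$; letting $N\to\infty$ gives the claimed bound. The main obstacle is the lower bound in Step 1: the hyperboloid $H(\bx_i,\eta)$ spreads along the coordinate axes out to $\bx_i+[-1,1]^d$, so a priori almost all of its Lebesgue mass could sit outside $V$, where $h$ vanishes a.e. The remedy of restricting to $H(\bx_i,\eta)\cap B(\bx_i,\delta/2)$ works only under the smallness $\eta<(\delta/2)^{2d}$, which in turn forces $\alpha_2$ to depend genuinely on $\delta$ and explains why the conclusion is stated for each $V_\delta$ separately rather than uniformly on $V$.
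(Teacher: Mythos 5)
Your proposal is correct and matches the paper's approach: the paper proves Lemma \ref{l:modulo} exactly by applying the Paley--Zygmund technique of Lemma \ref{l:hat R full measure} directly to the events $D_n$ (no auxiliary sets needed, since the density is bounded), using the first- and second-moment bounds that the paper has already isolated as Lemmas \ref{l:measure of hat Dn}, \ref{l:estimations on correlations s1} and \ref{l:estimations on correlations s2}. Your Steps~1 and~2 re-derive precisely these three lemmas --- the paper partitions at scales $n^{-3}$ and $m^{-3}$ rather than $e^{-\tau n/(2d)}$ etc., which is immaterial --- and your Step~3 and the identification of the geometric obstacle (the hyperboloid's mass escaping $V$, remedied by localising to $B(\bx_i,\delta/2)\subset V$) are exactly what the paper's argument relies on.
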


Now, we deduce the divergence part of Theorem \ref{t:main s} from Lemma \ref{l:modulo}.

\begin{proof}[Proof of the divergence part of Theorem \ref{t:main s} modulo Lemma \ref{l:modulo}]
	For any $ \delta>0 $ with $ \mu(V_\delta) >0 $, we claim that
	\[\mu(V_\delta \cap\rc^\times(\{\delta_n\}))=\mu(V_\delta ).\]
	Assume by contradiction that $ \mu(V_\delta \cap\rc^\times(\{\delta_n\}))<\mu(V_\delta ) $. Then, the set $ A:=V_\delta\setminus\rc^\times(\{\delta_n\})  $ has positive $\mu$-measure. By the density theorem \cite[Corollary 2.14]{Mattila}, for $ \mu $-almost every $ \bx\in A $
	\[\lim_{r\to 0}\frac{\mu(B(\bx,r)\cap A)}{\mu(B(\bx,r))}=1.\]
	For any such $ \bx $, there exists $ r<\delta/2 $ small enough such that
	\[\mu(B(\bx,r)\cap A)\ge (1-\alpha_2)\mu(B(\bx,r)).\]
	This means that
	\[\mu(B(\bx,r)\cap \rc^\times(\{\delta_n\}))<\alpha_2 \mu(B(\bx,r)),\]
	which contradicts Lemma \ref{l:modulo}.

	Since $ V=\cup_{\delta>0}V_\delta $ and $ \mu(V)=1 $, we arrive at the conclusion.
\end{proof}

\subsubsection{Estimating the measure of $ \mu(B\cap D_n) $}
\begin{lem}\label{l:measure of hat Dn}
	Under the setting of Theorem \ref{t:main s}. Let $ B $ be a ball with center $ \bz\in V_\delta  $ and radius $ r<\delta/2 $. Then, for any large $ n $,
	\[\frac{\mu(B)}{\mathfrak{c}2^{d+1}}\cdot \delta_n(-\log \delta_n)^{d-1}\le\mu(B\cap D_n)\le \mathfrak{c} d4^{d+1}\mu(B)\cdot \delta_n(-\log \delta_n)^{d-1}.\]
\end{lem}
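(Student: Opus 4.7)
The approach parallels that of Lemma \ref{l:measure of hat En}: partition $B$ into small balls, use Lemma \ref{l:simple lemma 2} to sandwich each local piece of $D_n$ between preimages of slightly enlarged and slightly shrunken hyperboloids, and apply exponential mixing together with the volume estimates for $H(\bx,\delta)$.

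Let $r_0$ denote the radius of $B$. I partition $B$ into $r_0^d e^{\tau n/2}$ balls $\{B(\bx_i, r_1)\}$ with $r_1 = e^{-\tau n/(2d)}$. By Lemma \ref{l:simple lemma 2},
\[B(\bx_i, r_1)\cap T^{-n}H(\bx_i, \delta_n - 2^d r_1)\subset B(\bx_i, r_1)\cap D_n\subset B(\bx_i, r_1)\cap T^{-n}H(\bx_i, \delta_n + 2^d r_1).\]
Both balls and hyperboloids (and their intersections with balls) lie in $\mathcal C_2$, so we may apply exponential mixing with the common constants $c,\tau$.

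For the upper bound, exponential mixing gives
\[\mu(B(\bx_i,r_1)\cap T^{-n}H(\bx_i,\delta_n+2^d r_1))\le\bigl(\mu(B(\bx_i,r_1))+ce^{-\tau n}\bigr)\mu(H(\bx_i,\delta_n+2^d r_1)).\]
Since $\mu(V)=1$ and $h\le \mathfrak{c}$ on $V$, and since $\delta_n>n^{-2}$ and $2^d r_1\le 2^{d+2} n^{-3}$ for large $n$, estimate \eqref{eq:measure H(delta) upper bound s} yields $\mu(H(\bx_i,\delta_n+2^d r_1))\le \mathfrak{c}\, d\, 4^d\,\delta_n(-\log\delta_n)^{d-1}$. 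Summing over $i$ produces an error term $cr_0^d e^{-\tau n/2}$, which is absorbed into $\mu(B)$ using $\mu(B)\ge \mathfrak{c}^{-1}(2r_0)^d$ (valid because $B\subset V$), yielding the upper bound with constant $\mathfrak{c}\, d\, 4^{d+1}$.

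The lower bound is the main obstacle: the density lower bound $h\ge \mathfrak{c}^{-1}$ holds only on $V$, but $H(\bx_i,\delta_n-2^d r_1)$ may extend outside $V$. The trick is to truncate the hyperboloid by a fixed-size ball staying inside $V$. Since $\bz\in V_\delta$ and $r_0<\delta/2$, one checks that every $\bx_i\in B$ satisfies $B(\bx_i,\delta/2)\subset V$, hence $B(\bx_i,\delta/4)\subset V$. Then
\[B(\bx_i,r_1)\cap T^{-n}\bigl(B(\bx_i,\delta/4)\cap H(\bx_i,\delta_n-2^d r_1)\bigr)\subset B(\bx_i,r_1)\cap D_n,\]
and exponential mixing gives
\[\mu(B(\bx_i,r_1)\cap D_n)\ge\bigl(\mu(B(\bx_i,r_1))-ce^{-\tau n}\bigr)\mu\bigl(B(\bx_i,\delta/4)\cap H(\bx_i,\delta_n-2^d r_1)\bigr).\]
Using $h\ge \mathfrak{c}^{-1}$ on $B(\bx_i,\delta/4)\subset V$ together with \eqref{eq:measure H(delta) lower bound} (which applies since $(\delta/4)^d>\sqrt{\delta_n-2^d r_1}$ for large $n$) and $\delta_n-2^d r_1\ge \delta_n/2$, we obtain
\[\mu(B(\bx_i,\delta/4)\cap H(\bx_i,\delta_n-2^d r_1))\ge \mathfrak{c}^{-1}(\delta_n/2)(-\log\delta_n)^{d-1}.\]
Summing the disjoint pieces $B(\bx_i,r_1)\cap D_n$ and absorbing the $ce^{-\tau n}$ error using $\mu(B)\gtrsim r_0^d$ gives the desired lower bound $\mu(B)\,\delta_n(-\log\delta_n)^{d-1}/(\mathfrak{c}\,2^{d+1})$. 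The main subtlety lies in the lower bound, where both the density control and the nontriviality of the shrunken hyperboloid must be handled simultaneously; choosing $r_1$ to decay exponentially while $\delta_n\gtrsim n^{-2}$ reconciles these requirements.
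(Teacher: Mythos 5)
Your proof is correct and follows essentially the same strategy as the paper's: partition $B$ into small balls, apply Lemma \ref{l:simple lemma 2} to reduce to preimages of slightly perturbed hyperboloids, apply exponential mixing, and use the density bounds on $V$ together with \eqref{eq:measure H(delta) upper bound s} and \eqref{eq:measure H(delta) lower bound}. The only differences are cosmetic: the paper partitions $B$ into balls of radius $n^{-3}$ (so $r^d n^{3d}$ of them) rather than your $e^{-\tau n/(2d)}$, and for the lower bound the paper directly writes $\mu(H(\bx_i,\delta_n-2^dn^{-3}))\ge\mu(B(\bx_i,\delta/2)\cap H(\bx_i,\delta_n-2^dn^{-3}))\ge\mathfrak{c}^{-1}\lm^d(\cdots)$ instead of truncating the target inside the mixing inequality as you do; your truncation is sound (the truncated set still satisfies $\pros$) but requires one extra verification the paper avoids.
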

\begin{proof}
	If $ \delta_n=0 $, then $ \mu(B\cap D_n)=0 $ and the lemma follows.

	Suppose $ \delta_n\ne 0 $. Then, $ \delta_n> n^{-2} $.
	Since the radius of $ B $ is less than $ \delta/2 $, by definition one has $ B(\bx,\delta/2)\subset V $ for any $ \bx\in B $.
	Note that the density $ h $, when restricted on $ V $, is bounded from below by $ \mathfrak{c}^{-1} $. It follows from \eqref{eq:measure H(delta) lower bound} that for any $ \bx\in B $ and $ n\in\N $ with $ n<(\delta/2)^{-d} $,
	\[\begin{split}
		\mu(H(\bx,\delta_n-2^dn^{-3}))&\ge
		\mu(B(\bx,\delta/2)\cap H(\bx,\delta_n-2^dn^{-3}))\\
		&\ge\mathfrak{c}^{-1}\lm^d(B(\bx,\delta/2)\cap H(\bx,\delta_n-2^dn^{-3}))\\
		&\ge  \mathfrak{c}^{-1} (\delta_n-2^dn^{-3})(-\log(\delta_n-2^dn^{-3}))^{d-1}.
	\end{split}\]

	Partition $ B $ into $ r^d n^{3d} $ balls with radius $ n^{-3} $. The collection of these balls is denoted by $ \{B(\bx_i,n^{-3}):1\le i\le r^dn^{3d}\} $. By Lemma \ref{l:simple lemma 2}, we have
	\begin{equation}
		B\cap D_n\supset\bigcup_{i\le r^dn^{3d}}B(\bx_i,n^{-3})\cap T^{-n}H(\bx_i,\delta_n-2^dn^{-3}).
	\end{equation}
	By the exponential mixing property, it follows that
	\begin{align}
		\mu(B\cap D_n)&\ge \sum_{i\le r^dn^{3d}}\mu(B(\bx_i,n^{-3})\cap T^{-n}H(\bx_i,\delta_n-2^dn^{-3}))\notag\\
		&\ge \sum_{i\le r^dn^{3d}}\big(\mu (B(\bx_i,n^{-3}))-ce^{-\tau n}\big)\mu(H(\bx_i,\delta_n-2^dn^{-3}))\notag\\
		&\ge \sum_{i\le r^dn^{3d}}\big(\mu (B(\bx_i,n^{-3}))-ce^{-\tau n}\big)\cdot\mathfrak{c}^{-1} (\delta_n-2^dn^{-3})(-\log(\delta_n-2^dn^{-3}))^{d-1}\notag\\
		&=(\mu(B)-cr^dn^{3d}e^{-\tau n/2})\cdot \mathfrak{c}^{-1} (\delta_n-2^dn^{-3})(-\log(\delta_n-2^dn^{-3}))^{d-1}\notag.
	\end{align}

On the other hand, for large $ n $, we have
\[cr^dn^{3d}e^{-\tau n/2}\le \mu(B)/2.\]
Futher, since $ \delta_n\ge n^{-2} $, for large $ n $, we have
\[\delta_n-2^dn^{-3}\ge \delta_n/2\quad\text{and}\quad -\log(\delta_n-2^dn^{-3})\ge (-\log \delta_n)/2.\]
Hence, for large $ n $,
\[\mu(B\cap D_n)\ge\frac{\mu(B)}{\mathfrak{c}2^{d+1}}\cdot \delta_n(-\log \delta_n)^{d-1}.\]

The second inequality follows in the same way if we use the inclusion
\[B\cap D_n\subset \bigcup_{i\le r^dn^{3d}}B(\bx_i,n^{-3})
\cap T^{-n}H(\bx_i,\delta_n+2^dn^{-3}).\qedhere\]
\end{proof}
\begin{rem}
	The set $ V $ being open is crucial for us because it gives a uniform lower estimation for the $\mu$-measure of a hyperboloid centered at $ V_\delta  $.
\end{rem}
\subsubsection{Estimating the measure of $ B\cap D_m\cap D_n $ with $ m<n $}
\

We proceed to estimate the correlations of the sets $ D_n $.  Let $ m,n\in \N $ with $ m<n $. If $ \delta_m=0 $ or $ \delta_n=0 $, then $ \mu(D_m)=\mu(D_n)=0 $. Thus, for any ball $ B\subset [0,1]^d $, we have
\[\mu(B\cap D_m\cap D_n)=0.\]

Suppose for the moment that neither $ \delta_m $ nor $ \delta_n $ is $ 0 $.
\begin{lem}\label{l:estimations on correlations s1}
	Let $ B\subset [0,1]^d $ be a ball. There exists a constant $ \tilde c_1 $ such that for all large integers $ m,n $ with $ (4d\log n)/\tau\le m<n $,
	\begin{equation*}
		\mu(B\cap D_m\cap D_n)\le \tilde c_1\mu(B) (\delta_m(-\log \delta_m)^{d-1}+e^{-\tau (n-m)})\cdot \delta_n(-\log \delta_n)^{d-1}.
	\end{equation*}
\end{lem}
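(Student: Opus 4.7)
The argument should run parallel to Lemma~\ref{l:estimations on correlations 1}, considerably simplified by the uniform upper bound $h\le\mathfrak c$ (so no $L^q$-type manipulation is needed). First I would dispose of the trivial case: if $\delta_m=0$ or $\delta_n=0$, then $\mu(D_m)=0$ or $\mu(D_n)=0$ and the estimate holds; hence we may assume $\delta_m>m^{-2}$ and $\delta_n>n^{-2}$. Write $r_0$ for the radius of $B$, set $r:=e^{-\tau m/(2d)}$, and partition $B$ into $r_0^d e^{\tau m/2}$ balls $\{B(\bx_i,r)\}$. By Lemma~\ref{l:simple lemma 2},
\[
B\cap D_m\cap D_n\subset\bigcup_i B(\bx_i,r)\cap T^{-m}H_i(m)\cap T^{-n}H_i(n),
\]
where $H_i(k):=H(\bx_i,\delta_k+2^d r)$.

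The central step is a double application of exponential mixing. Writing $B(\bx_i,r)\cap T^{-m}H_i(m)\cap T^{-n}H_i(n)$ as a disjoint union over $J_{n-m}\in\mathcal F_{n-m}$ of $B(\bx_i,r)\cap T^{-m}(J_{n-m}\cap H_i(m)\cap T^{-(n-m)}H_i(n))$ and using \eqref{eq:expoential decay definition} twice, I would obtain
\[
\mu\bigl(B(\bx_i,r)\cap T^{-m}H_i(m)\cap T^{-n}H_i(n)\bigr)\le\bigl(\mu(B(\bx_i,r))+ce^{-\tau m}\bigr)\bigl(\mu(H_i(m))+ce^{-\tau(n-m)}\bigr)\mu(H_i(n)).
\]
To legitimize this, I need the hyperboloid analogue of Lemma~\ref{l:boundary estimation}: by running the same telescoping induction but using the bound \eqref{eq:boundary H} in place of the ``$2d$'' bound for hyperrectangles, one gets $\mc(\partial(J_{n-m}\cap H_i(m)\cap T^{-(n-m)}H_i(n)))\le 2K_1+K/(1-L^{-(d-1)})$, so these sets lie in $\mathcal C_2$ as defined by \pros, and similarly for the single-hyperboloid factors.

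Next I would estimate $\mu(H_i(k))$ via \eqref{eq:measure H(delta) upper bound s} combined with $h\le\mathfrak c$. The assumption $m\ge(4d\log n)/\tau$ is tailored to give $r\le n^{-2}$, so that the perturbation $2^d r$ is dominated by $\delta_n$ (and a fortiori by $\delta_m$), yielding $\mu(H_i(k))\le\mathfrak c\, d\, 4^d\delta_k(-\log\delta_k)^{d-1}$ for $k\in\{m,n\}$ and all sufficiently large $m$. Summing over the $r_0^d e^{\tau m/2}$ balls and noting that $r_0^d e^{\tau m/2}\cdot ce^{-\tau m}=O(e^{-\tau m/2})$ gets absorbed into $\mu(B)$, the three terms of the resulting product collapse after consolidating constants into the single constant $\tilde c_1$, giving precisely the bound claimed.

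The main obstacle will be the boundary verification: one must carefully reproduce the inductive proof of Lemma~\ref{l:boundary estimation} with hyperboloids in place of hyperrectangles, and check that the resulting Minkowski-content bound fits within the tolerance $2d+K_1+K/(1-L^{-(d-1)})$ used to define $\mathcal C_2$, so that exponential mixing applies to both pairings. A secondary, purely numerical, subtlety is the calibration of the partition scale $r=e^{-\tau m/(2d)}$: it must be small enough that $2^d r\ll\delta_n$ (explaining the threshold $m\ge(4d\log n)/\tau$) yet not so small that the mixing error $r_0^de^{\tau m/2}\cdot ce^{-\tau m}$ overwhelms $\mu(B)$—a balance automatically achieved by the chosen exponent $\tau m/(2d)$.
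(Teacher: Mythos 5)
Your strategy matches the paper's: dispose of the case $\delta_m=0$ or $\delta_n=0$, subdivide $B$, invoke Lemma~\ref{l:simple lemma 2}, apply exponential mixing twice via a decomposition over $\mathcal F_{n-m}$, bound the hyperboloid measures, and sum. The genuine difference is the partition scale: you carry over $r=e^{-\tau m/(2d)}$ from the hyperrectangle case (Lemma~\ref{l:estimations on correlations 1}), whereas the paper subdivides $B$ into balls of radius $n^{-3}$, a choice tailored to \eqref{eq:measure H(delta) upper bound s}. Both scales work, but two details in your write-up need adjustment. First, with your $r$ the perturbation $2^d r$ is only $\le 2^d n^{-2}$, so it is \emph{not} dominated by $\delta_n$ (which can be as small as $n^{-2}$), nor a fortiori by $\delta_m$; consequently \eqref{eq:measure H(delta) upper bound s} is not directly applicable (it is stated for $c_0 n^{-3}$ perturbations with $c_0<2^{d+2}$). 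You should instead use \eqref{eq:measure H(delta) upper bound} together with the elementary observation that, for $k\in\{m,n\}$, the hypotheses $\delta_k>k^{-2}\ge n^{-2}\ge r$ give $\delta_k+2^dr<(1+2^d)\delta_k$, whence $\mu(H(\bx_i,\delta_k+2^dr))\le \mathfrak c\,d\,2^d(1+2^d)\,\delta_k(-\log\delta_k)^{d-1}$; this still yields the claimed bound after absorbing the extra factor into $\tilde c_1$. Second, your computed Minkowski-content bound $2K_1+K/(1-L^{-(d-1)})$ for $J_{n-m}\cap H_i(m)\cap T^{-(n-m)}H_i(n)$ is correct, but it lies within the paper's $\pros$ tolerance $2d+K_1+K/(1-L^{-(d-1)})$ only if $K_1\le 2d$, which is not guaranteed for hyperboloids in dimension $d\ge 2$. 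This is a small infelicity already latent in the paper (which says only ``by the same reason as \eqref{eq:three balls cap}''), and is harmlessly fixed by replacing the constant in the definition of $\mathcal C_2$ by, say, $2d+2K_1+K/(1-L^{-(d-1)})$; you should state that fix explicitly rather than leaving it as an unverified ``obstacle.''
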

\begin{proof}
	Let $ r $ be the radius of $ B $. Partition $ B $ into $ r^dn^{3d} $ balls with radius $ n^{-3} $. Let $ \{B(\bx_i,n^{-3}):1\le i\le r^dn^{3d}\} $ denote the collection of these balls. For each ball $ B(\bx_i,n^{-3}) $, by Lemma \ref{l:simple lemma 2} we have
	\[B(\bx_i,n^{-3})\cap D_m\cap D_n\subset B(\bx_i,n^{-3})\cap T^{-m} H(\bx_i,\delta_m+2^dn^{-3})\cap T^{-n} H(\bx_i,\delta_n+2^dn^{-3}).\]
	Hence,
	\begin{equation}\label{eq:B(x,r) cap Ｄm cap Ｄn}
		B{\cap}D_m{\cap}D_n\subset \bigcup_{i\le r^dn^{3d}}B(\bx_i,n^{-3})\cap T^{-m} H(\bx_i,\delta_m+2^dn^{-3})\cap T^{-n} H(\bx_i,\delta_n+2^dn^{-3}).
	\end{equation}
	By the same reason as \eqref{eq:three balls cap}, it follows from \eqref{eq:measure H(delta) upper bound s} that
	\begin{align}
		&\mu\big(B(\bx_i,n^{-3})\cap T^{-m} H(\bx_i,\delta_m+2^dn^{-3})\cap T^{-n} H(\bx_i,\delta_n+2^dn^{-3})\big)\label{eq:2}\\
		\le& \big(\mu(B(\bx_i,n^{-3})+ce^{-\tau m}\big)\big(\mu( H(\bx_i,\delta_m+2^dn^{-3}))+ce^{-\tau (n-m)}\big)\notag\\
		&\hspace{20em}\cdot \mu( H(\bx_i,\delta_n+2^dn^{-3}))\notag\\
		\le& \big(\mu(B(\bx_i,n^{-3})+ce^{-\tau m}\big)\big(\mathfrak{c}d2^d\delta_m(-\log \delta_m)^{d-1}+ce^{-\tau (n-m)}\big)\notag\\
		&\hspace{20em}\cdot\mathfrak{c}d2^d\delta_n(-\log \delta_n)^{d-1}\notag.
	\end{align}
	Summing over $ i\le r^dn^{3d} $, we have
	\begin{align}
		&\mu(B\cap D_m\cap D_n)\notag\\
		\le& \big(\mu(B)+cr^dn^{3d}e^{-\tau m}\big)\big(\mathfrak{c}d2^d\delta_m(-\log \delta_m)^{d-1}+ce^{-\tau (n-m)}\big)\cdot \mathfrak{c}d2^d\delta_n(-\log \delta_n)^{d-1}.\notag
	\end{align}
	Since $ m\ge (4d\log n)/\tau $, we have
	\[n^{3d}e^{-\tau m}\le n^{3d}\cdot n^{-4d}=n^{-d}.\]
	Hence, there exists a constant $ \tilde c_1 $ such that for all large integers $ m $ and $ n $ with $ (4d\log n)/\tau\le m<n $,
	\begin{equation*}
		\mu(B\cap D_m\cap D_n)\le \tilde c_1\mu(B)(\delta_m(-\log \delta_m)^{d-1}+e^{-\tau (n-m)})\cdot \delta_n(-\log \delta_n)^{d-1},
	\end{equation*}
	which finishes the proof.
\end{proof}

In light of Lemma \ref{l:boundary estimation} and \eqref{eq:boundary H}, one can verify that the sets $ J_n\cap B\cap T^{-n}H(\bx,\delta)$ satisfy the bound property $ \pros $, where $ J_n\in \mathcal{F}_n $ and $ B $ is a ball. As a result, we have the following estimations, which may be proved in much the same way as Lemma \ref{l:measure of intersection}.

\begin{lem}\label{l:measure of intersection s}
	Let $ T\colon[0,1]^d\to [0,1]^d $ be a piecewise expanding map. There exists a constant $ \tilde c_2 $ such that for any integers $ m\le(4d\log n)/\tau $, and for any ball $ B $, $ \bx\in[0,1]^d $ and $ 0<\delta_1,\delta_2<1 $, we have
	\[\mu(B\cap T^{-m}H(\bx,\delta_1)\cap T^{-n}H(\bx,\delta_2))\le \big(\mu(B\cap T^{-m}H(\bx,\delta_1))+\tilde c_2e^{-\tau n/2}\big)\mu(H(\bx,\delta_2)).\]
\end{lem}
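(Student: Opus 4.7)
The plan is to mimic the proof of Lemma \ref{l:measure of intersection}, with hyperrectangles replaced by balls and hyperboloids, and with the bounded property \pro{} replaced by \pros. First, I would decompose the set along the partition $\mathcal F_m$ of Remark \ref{r:1}:
\[
\mu(B\cap T^{-m}H(\bx,\delta_1)\cap T^{-n}H(\bx,\delta_2))=\sum_{J_m\in\mathcal F_m}\mu\big(J_m\cap B\cap T^{-m}H(\bx,\delta_1)\cap T^{-n}H(\bx,\delta_2)\big).
\]

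The crucial ingredient is that each set $J_m\cap B\cap T^{-m}H(\bx,\delta_1)$ satisfies \pros. To see this, I would argue exactly as in Lemma \ref{l:boundary estimation}: using subadditivity of $\mc(\partial\cdot)$ on unions of boundaries, the formula \eqref{eq:boundary of cylinder} for $\mc(\partial J_m)$, Lemma \ref{l:boundary of C1 image} applied to the inverse branch $T^{-m}|_{T^mJ_m}$ (whose differential has norm at most $L^{-m}$), together with $\mc(\partial B)\le 2d$ for a ball in the maximum norm and $\mc(\partial H(\bx,\delta_1))\le K_1$ from \eqref{eq:boundary H}. The resulting estimate
\[
\mc\big(\partial (J_m\cap B\cap T^{-m}H(\bx,\delta_1))\big)\le 2d+\frac{K}{1-L^{-(d-1)}}+L^{-(d-1)m}K_1
\]
falls below the constant in \pros.

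With \pros{} at hand, the exponential mixing property of $\mu$ applied to the pair $(J_m\cap B\cap T^{-m}H(\bx,\delta_1),\,H(\bx,\delta_2))$ gives, for every $J_m$,
\[
\mu\big(J_m\cap B\cap T^{-m}H(\bx,\delta_1)\cap T^{-n}H(\bx,\delta_2)\big)\le\big(\mu(J_m\cap B\cap T^{-m}H(\bx,\delta_1))+ce^{-\tau n}\big)\mu(H(\bx,\delta_2)).
\]
Summing over $J_m\in\mathcal F_m$ and using the disjointness of the $J_m$'s yields
\[
\mu\big(B\cap T^{-m}H(\bx,\delta_1)\cap T^{-n}H(\bx,\delta_2)\big)\le\big(\mu(B\cap T^{-m}H(\bx,\delta_1))+\#\mathcal F_m\cdot ce^{-\tau n}\big)\mu(H(\bx,\delta_2)).
\]

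The only remaining point is to absorb the factor $\#\mathcal F_m$. Since $\#\mathcal F_m\le P^m$ and $m\le (4d\log n)/\tau$, one has $\#\mathcal F_m\le n^{4d\log P/\tau}$, so
\[
\#\mathcal F_m\cdot ce^{-\tau n}\le c\,n^{4d\log P/\tau}e^{-\tau n}\le \tilde c_2e^{-\tau n/2}
\]
for a suitable constant $\tilde c_2$ depending only on $c,P,d,\tau$ (the polynomial factor is swallowed by half of the exponential decay, with finitely many small $n$ absorbed into $\tilde c_2$). This completes the proof. I do not anticipate any serious obstacle: the only subtle point is verifying that the composite set $J_m\cap B\cap T^{-m}H(\bx,\delta_1)$ obeys \pros, and that verification is a verbatim adaptation of Lemma \ref{l:boundary estimation} once \eqref{eq:boundary H} is invoked.
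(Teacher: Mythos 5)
Your proof is correct and takes essentially the same approach as the paper, which omits the argument entirely and states only that the lemma ``may be proved in much the same way as Lemma \ref{l:measure of intersection},'' after noting that $J_m\cap B\cap T^{-m}H(\bx,\delta)$ satisfies $\pros$ by Lemma \ref{l:boundary estimation} and \eqref{eq:boundary H}. Your write-up fills in precisely those details: the $\mathcal F_m$ decomposition, the $\pros$ verification via the boundary estimate with $K_1$ in place of $2d$, the application of exponential mixing, and the absorption of $\#\mathcal F_m\le n^{4d\log P/\tau}$ into $e^{-\tau n/2}$.
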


With Lemma \ref{l:measure of intersection s} at our disposal, we are ready to estimate the correlations of the sets $ D_n $ for the case $ m\le (4d\log n)/\tau $.

\begin{lem}\label{l:estimations on correlations s2}
	Let $ B\subset [0,1]^d $ be a ball. There exists a constant $ \tilde c_3 $ such that for all integers $ m,n $ with $ m< (4d\log n)/\tau $, we have
	\[\mu(B\cap D_m\cap D_n)\le \tilde c_3\mu(B)\cdot \delta_m(-\log \delta_m)^{d-1}\cdot \delta_n(-\log \delta_n)^{d-1}.\]
\end{lem}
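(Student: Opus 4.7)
The plan is to mirror the two-scale partition argument of Lemma~\ref{l:estimations on correlations 2}, adapted from rectangle to hyperboloid targets. Under the hypotheses of Theorem~\ref{t:main s}, the density $h$ is bounded above by $\mathfrak{c}$ on $V$, allowing me to replace the $L^q$-based Zygmund estimates from Lemma~\ref{l:measure of enlarged ball} by the direct hyperboloid bound $\mu(H(\bx,\delta+\varepsilon))\le\mathfrak{c}\,d4^{d}\delta(-\log\delta)^{d-1}$ from \eqref{eq:measure H(delta) upper bound s}. I reduce at the outset to $\delta_m,\delta_n>0$, so $\delta_m>m^{-2}$ and $\delta_n>n^{-2}$ by the standing assumption from the start of the divergence part.

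\emph{Step 1 (fine partition at scale $r_1=n^{-3}$).} Partition $B$ into balls $\{B(\bx_i,r_1)\}$. By Lemma~\ref{l:simple lemma 2},
\[B\cap D_m\cap D_n\subset\bigcup_i G_i\cap T^{-n}H(\bx_i,\delta_n+2^dr_1),\quad G_i:=B(\bx_i,r_1)\cap T^{-m}H(\bx_i,\delta_m+2^dr_1).\]
Since balls and hyperboloids satisfy the bounded property $\pros$ by \eqref{eq:boundary H} and Lemma~\ref{l:boundary estimation}, Lemma~\ref{l:measure of intersection s} gives
\[\mu(B\cap D_m\cap D_n)\le \sum_i\bigl(\mu(G_i)+\tilde c_2 e^{-\tau n/2}\bigr)\mu\bigl(H(\bx_i,\delta_n+2^dr_1)\bigr).\]
The exponential error term over the $\sim n^{3d}$ cells, multiplied by the hyperboloid factor, is negligible compared to $\mu(B)\delta_n(-\log\delta_n)^{d-1}$ since $\delta_n\ge n^{-2}$. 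So it suffices to show $\sum_i\mu(G_i)\lesssim \mu(B)\delta_m(-\log\delta_m)^{d-1}$.

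\emph{Step 2 (coarse partition at scale $r_2=e^{-\tau m/(2d)}$).} The hypothesis $m<(4d\log n)/\tau$ yields $r_2\ge n^{-2}>r_1$. Re-partition $B$ into balls $\{B(\bz_j,r_2)\}$. For each $\bx_i\in B(\bz_j,r_2)$, the triangle inequality gives $B(\bx_i,r_1)\subset B(\bz_j,2r_2)$, and the product-expansion estimate behind \eqref{eq:4} gives $H(\bx_i,\delta_m+2^dr_1)\subset H(\bz_j,\delta_m+2^{d+1}r_2)$, with the ambient constraint $\bw-\bz_j\in[-1,1]^d$ automatic for $\bw,\bz_j\in[0,1]^d$. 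Summing and applying exponential mixing with $\pros$,
\[\sum_i\mu(G_i)\le \sum_j\bigl(\mu(B(\bz_j,2r_2))+ce^{-\tau m}\bigr)\mu\bigl(H(\bz_j,\delta_m+2^{d+1}r_2)\bigr).\]
By bounded overlap $\sum_j\mu(B(\bz_j,2r_2))\lesssim \mu(B)$; the $\sim r_0^d e^{\tau m/2}$ cells contribute an error $\sim r_0^d e^{-\tau m/2}$; and each hyperboloid factor is bounded by $\mathfrak{c}\,d4^{d}\delta_m(-\log\delta_m)^{d-1}$. For $B\subset V$ (the case relevant to Lemma~\ref{l:modulo}), the lower density bound $h\ge\mathfrak{c}^{-1}$ on $V$ yields $\mu(B)\ge\mathfrak{c}^{-1}(2r_0)^d$, so $r_0^d\lesssim \mu(B)$ and the error absorbs into $\mu(B)$, delivering the desired bound.

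\emph{Main obstacle.} The key technical point is the hyperboloid containment in Step~2: showing $H(\bx_i,\delta_m+2^dr_1)\subset H(\bz_j,\delta_m+2^{d+1}r_2)$ when $|\bx_i-\bz_j|\le r_2$. I expect to handle this by the same product-expansion identity $\prod_k(|w_k-x_{i,k}|+r_2)\le \prod_k|w_k-x_{i,k}|+2^dr_2$ used in the proof of Lemma~\ref{l:simple lemma 2}. A secondary wrinkle is the absorption $r_0^de^{-\tau m/2}\lesssim \mu(B)$, which is available precisely because the balls to which the lemma is actually applied lie in $V$.
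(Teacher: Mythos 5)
Your proof is correct and follows the paper's two-scale partition scheme: fine partition at scale $n^{-3}$, use Lemma~\ref{l:measure of intersection s} to peel off the $T^{-n}H$ factor, then a coarse re-partition to aggregate $\sum_i\mu(G_i)$ and apply exponential mixing at time $m$. The one substantive deviation is your choice of coarse scale $r_2=e^{-\tau m/(2d)}$ (transplanted from Lemma~\ref{l:estimations on correlations 2}) with doubled cells $B(\bz_j,2r_2)$ and a bounded-overlap argument, whereas the paper partitions at the polynomial scale $m^{-3}$ and tracks a generic point $\bx\in B(\bz_j,m^{-3})\cap G_i$, which makes the hyperboloid enlargement land exactly within the hypothesis of \eqref{eq:measure H(delta) upper bound s} and gives $\sum_j\mu(B(\bz_j,m^{-3}))=\mu(B)$ without doubling; both routes work, though yours needs the extra (true, for large $m$) check that $e^{-\tau m/(2d)}\lesssim m^{-3}$, and you correctly flag the same two hinges the paper relies on, namely Lemma~\ref{l:measure of intersection s} and the bound $\mu(B)\ge\mathfrak c^{-1}r_0^d$ coming from $B$ lying in $V$.
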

\begin{proof}
	We follow the notation of Lemma \ref{l:estimations on correlations s1}. By Lemma \ref{l:measure of intersection s} and \eqref{eq:measure H(delta) upper bound}, the measure in \eqref{eq:2} can be estimated as
	\[\begin{split}
		&\mu\big(B(\bx_i,n^{-3})\cap T^{-m} H(\bx_i,\delta_m+2^dn^{-3})\cap T^{-n} H(\bx_i,\delta_n+2^dn^{-3})\big)\\
		\le& \big(\mu(B(\bx_i,n^{-3})\cap T^{-m} H(\bx_i,\delta_m+2^dn^{-3}))+\tilde c_2e^{-\tau n/2}\big)\mu (H(\bx_i,\delta_n+2^dn^{-3}))\\
		\le&\big(\mu(B(\bx_i,n^{-3})\cap T^{-m} H(\bx_i,\delta_m+2^dn^{-3}))+\tilde c_2e^{-\tau n/2}\big)\cdot\mathfrak{c}d2^d\delta_n(-\log \delta_n)^{d-1}.
	\end{split}\]
	Summing over $ i $, we have
	\begin{align}
		&\mu(B\cap D_m\cap D_n)\notag\\
		\le& \Big(\sum_{i\le r^dn^{3d}}\mu(B(\bx_i,n^{-3})\cap T^{-m} H(\bx_i,\delta_m+2^dn^{-3}))+\tilde c_2r^dn^{3d}e^{-\tau n}/2\Big)\label{eq:summation needed to estimate}\\
		&\cdot\mathfrak{c}d2^d\delta_n(-\log \delta_n)^{d-1}.\notag
	\end{align}
	Now, the task is to estimate the summation in \eqref{eq:summation needed to estimate}. Partition $ B $ into $ r^dm^{3d} $ balls with radius $ m^{-3} $. Denote the collection of these balls by
	\[\{B(\bz_j,m^{-3}):1\le j\le r^dm^{3d}\}.\]
	For any $ \bx\in B(\bz_j,m^{-3})\cap B(\bx_i,n^{-3})\cap T^{-m} H(\bx_i,\delta_m+2^dn^{-3}) $, we have
	\[T^m\bx\in H(\bx_i,\delta_m+2^dn^{-3})\quad\text{and}\quad \bx_i\in B(\bz_j,m^{-3}+n^{-3}).\]
	In the spirit of Lemma \ref{l:simple lemma 2}, we obtain
	\[T^m\bx\in H(\bx_i, \delta_m+2^dn^{-3})\subset H(\bz_j, \delta_m+2^d m^{-3}+2\cdot 2^dn^{-3})\subset H(\bz_j, 2^{d+2}m^{-3}).\]
	That is, $ \bx\in B(\bz_j,m^{-3})\cap T^{-m}H(\bz_j,\delta_m+2^{d+2}m^{-3}) $. Hence,
	\begin{equation}\label{eq:subset well estimate 1}
		\begin{split}
			&\bigcup_{i\le r^dn^{3d}} B(\bx_i,n^{-3})\cap T^{-m} H(\bx_i,\delta_m+2^dn^{-3})\\
			&\hspace{2 em}\subset\bigcup_{j\le r^dm^{3d}} B(\bz_j,m^{-3})\cap T^{-m}H(\bz_j,\delta_m+2^{d+2}m^{-3}).
		\end{split}
	\end{equation}
	With this inclusion and noting that the top of \eqref{eq:subset well estimate 1} is a disjoint union, we have
	\begin{align}
		&\sum_{i\le r^dn^{3d}}\mu(B(\bx_i,n^{-3})\cap T^{-m} H(\bx_i,\delta_m+2^dn^{-3}))\notag\\
		=&\mu\Big(\bigcup_{i\le r^dn^{3d}} B(\bx_i,n^{-3})\cap T^{-m} H(\bx_i,\delta_m+2^dn^{-3})\Big)\notag\\
		\le&\sum_{j\le r^dm^{3d}} \mu(B(\bz_j,m^{-3})\cap T^{-m}H(\bz_j,\delta_m+2^{d+2}m^{-3})).\notag
	\end{align}
	By the exponential mixing property, the above summation is majorized by
	\begin{align}
		&\sum_{j\le r^dm^{3d}} \mu(B(\bz_j,m^{-3})\cap T^{-m}H(\bz_j,\delta_m+2^{d+2}m^{-3}))\notag\\
		\le& \sum_{j\le r^dm^{3d}}(\mu(B(\bz_j,m^{-3}))+ce^{-\tau m}) \mu (H(\bz_j,\delta_m+2^{d+2}m^{-3}))\notag\\
		\le &\sum_{j\le r^dm^{3d}}(\mu(B(\bz_j,m^{-3}))+ce^{-\tau m})\cdot\mathfrak{c}d2^d\delta_m(-\log \delta_m)^{d-1}\notag\\
		=& (\mu(B)+cr^dm^{3d}e^{-\tau m})\cdot\mathfrak{c}d2^d\delta_m(-\log \delta_m)^{d-1}\notag\\
		\le& \mu(B)(1+\mathfrak{c}cm^{3d}e^{-\tau m})\cdot\mathfrak{c}d2^d\delta_m(-\log \delta_m)^{d-1}\label{eq:second to the last step s},
	\end{align}
	where the last inequality follows from $ \mu(B)\ge \mathfrak{c}^{-1}r^d $.
	Combining with \eqref{eq:summation needed to estimate} and \eqref{eq:second to the last step s}, we have
	\[\begin{split}
		\mu(B\cap D_m\cap D_n)\le &\big(\mu(B)(1+\mathfrak{c}cm^{3d}e^{-\tau m})\cdot\mathfrak{c}d2^d\delta_m(-\log \delta_m)^{d-1}+\tilde c_2r^dn^{3d}e^{-\tau n}/2\big)\\
		&\cdot\mathfrak{c}d2^d\delta_n(-\log \delta_n)^{d-1}.
	\end{split}\]
	Since $ \delta_m>m^{-2} $ and $ \delta_n>n^{-2} $, there exists a constant $ \tilde c_3 $ such that
	\[\mu(B\cap D_m\cap D_n)\le \tilde c_3\mu(B)\cdot \delta_m(-\log \delta_m)^{d-1}\cdot \delta_n(-\log \delta_n)^{d-1}.\qedhere\]
\end{proof}
\begin{proof}[Proof of Lemma \ref{l:modulo}]
	Applying the same technique used in proof of Lemma \ref{l:hat R full measure}, we arrive at the conclusion.
\end{proof}

\section{Hausdorff dimension of $ \rc(\Psi) $}
The proof of the Hausdorff dimension of $ \rc(\Psi) $ makes essential use of the ideas from the proofs of \cite[Theorems 7 and 12]{LLVZ22} for shrinking target problems. Thus, in the subsequent proofs, we will make an effort to maintain consistent notation and employ similar arguments to improve clarity and readability.


\subsection{Upper bound for $ \hdim\rc(\Psi) $}
The upper bound is relatively easier to prove by using the definition of Hausdorff dimension on the natural cover of $ \rc(\Psi) $.

We begin with some notation and known results. For $ |\beta|>1 $, let $ T_\beta $ be the $\beta$-transformation on $ [0,1) $ defined by
\[T_\beta x=\beta x\pmod 1.\]
%
Let
\[\cq:=\bigg\{\Big[0,\frac{1}{|\beta|}\Big), \dots,\Big[\frac{k}{|\beta|},\frac{k+1}{|\beta|}\Big),\dots, \Big[\frac{\lfloor|\beta|\rfloor}{|\beta|},1\Big) \bigg\}\]
be the partition of $ [0,1) $ corresponding to the $ T_\beta $. For each $ n\ge 1 $, define
\[\cq^n:=\big\{Q_{i_0}\cap T_\beta^{-1}Q_{i_1}\cap\cdots\cap T^{-(n-1)}Q_{i_{n-1}}:Q_{i_j}\in\cq\text{ for $ 0\le j\le n-1 $}\big\}.\]
Clearly, $\cq^n$ consists of pairwise disjoint intervals, each with a length less than $|\beta|^{-n}$. These intervals are called cylinders of order $n$. Moreover, for each $ I\in \cq^n $, $ T_\beta^n|_I $ is linear with slope $ \beta^n $.

Denote by $ N_n $ the cardinality of $ \cq^n $. Since the topological entropy of $ T_\beta $ is $ \log|\beta| $, for any $ \epsilon>0 $, we have
\begin{equation}\label{eq:Nn upper bound}
	N_n\le |\beta|^{n(1+\epsilon)}
\end{equation}
for $ n $ sufficiently large.

Now we turn to bound $ \hdim\rc(\Psi) $ from above. Let $ \cq^n=\{C_{n,j}:1\le j\le N_n\} $. Given a cylinder $ C_{n,j} $ of order $ n $. Let $ r>0 $ and consider the set
\[F_{n,j}:=\{x\in C_{n,j}: |T_\beta^nx-x|<r\}.\]
Choose two points $ x,y\in C_{n,j} $ such that $ x\in F_{n,j} $ and $ |x-y|>3r/|\beta|^n $. By using the triangle inequality, for any $ n $ with $ 3|\beta|^{-n}<1 $, we have
\[\begin{split}
	|T_\beta^ny-y|&=|T_\beta^ny-T_\beta^nx+T_\beta^nx-x+x-y|\\
	&\ge |T_\beta^ny-T_\beta^nx|-|T_\beta^nx-x|-|x-y|\\
	&=|\beta|^n|x-y|-|T_\beta^nx-x|-|x-y|\\&> 3r-r-3r/|\beta|^n\\
	&>r.
\end{split}\]
Hence, $ y\notin F_{n,j} $. This implies that $ F_{n,j} $ is contained in an interval, denoted by $ I_{n,j} $, whose length is $ 6r/|\beta|^n $. Thus,
\begin{equation*}
	\{x\in[0,1):|T_\beta^nx-x|<r\}\subset \bigcup_{j=1}^{N_n}I_{n,j},
\end{equation*}
where each $ I_{n,j} $ is an interval lying in some cylinder of order $ n $. Therefore, for $ 1\le i\le d $, we have
\begin{equation}\label{eq:<psiisub}
	\{x\in[0,1):|T_{\beta_i}^nx-x|<\psi_i(n)\}\subset \bigcup_{j_i=1}^{N_{i,n}}I_{n,j_i}^{(i)},
\end{equation}
where each $ I_{n,j_i}^{(i)} $ is an interval of length $ 6\psi_i(n)|\beta_i|^{-n} $ contained in some cylinder of order $ n $ and $ N_{i,n} $ is the number of such intervals. For $ n\in \N $, let
\[J_n:=\{\textbf{j}=(j_1,\dots,j_d):1\le j_i\le N_{i,n}~(1\le i\le d)\}\]
and for $ \textbf{j}\in J_n $, let
\[R_{n,\textbf{j}}:=I_{n,j_1}^{(1)}\times\cdots\times I_{n,j_d}^{(d)}.\]
Let
\[R_n=\bigcup_{\textbf{j}\in J_n}R_{n,\textbf{j}}\qquad\text{and}\qquad \rc_n=\{R_{n,\textbf{j}}:\textbf{j}\in J_n\}.\]
	Then, by the definition of $ \rc(\Psi) $ and \eqref{eq:<psiisub},
	\[\rc(\Psi)\subset\bigcap_{N=1}^\infty\bigcup_{n=N}^\infty R_n\]
	and for any $ N\ge 1 $,
	\[\rc(\Psi)\subset \bigcup_{n=N}^\infty R_n.\]
	Namely, $ \rc(\Psi) $ can be covered by the family $ \{\rc_n:n=N,N+1,\dots\} $ of rectangles.

	Now, we will estimate the number of balls $ B_{i,n} $ of diameter $ 6\psi_i(n)|\beta_i|^{-n} $ (the sidelength of the rectangle in $ \rc_n  $ along the direction of the $ i $-th axis) needed to cover the set $ R_n $. This estimation, along with the definition of Hausdorff dimension, provides an upper bound for $ \hdim\rc(\Psi) $. To this end, we adopt the method similar to that presented in the proof of \cite[Proposition 4]{LLVZ22}, but with slight improvements. Such improvements enable us to bypass Bugeaud and Wang's result \cite[Theorem 1.2]{BuWa2014} and to obtain the upper bound without assuming $ \beta_i>1 $ for all $ 1\le i\le d $.

	Given a rectangle $ R=R_{n,\textbf{j}}\in\rc_n $ with $ \textbf{j}=(j_1,\dots, j_d) $. Since $ |I_{n, j_k}^{(k)}|=6\psi_k(n)|\beta_k|^{-n} $, we see that $ I_{n, j_k}^{(k)} $ can be covered by
	\[\max\bigg\{1,\frac{\psi_k(n)|\beta_k|^{-n}}{\psi_i(n)|\beta_i|^{-n}}\bigg\}\]
	intervals of length $ 6\psi_i(n)|\beta_i|^{-n} $. Thus, we can find a collection $ \cb_{i,n}(R) $ of balls $ B_{i,n} $ that covers $ R $ with
	\begin{equation}\label{eq:Bni(R) upper bound}
			 \#  \cb_{i,n}(R)\le \prod_{\substack{1\le k\le d:\\ \psi_k(n)|\beta_k|^{-n}\ge \psi_i(n)|\beta_i|^{-n}}}\frac{\psi_k(n)|\beta_k|^{-n}}{\psi_i(n)|\beta_i|^{-n}}.
		\end{equation}
Note that if $ |\beta_k|^{-n}< 6\psi_i(n)|\beta_i|^{-n} $ for some $ 1\le k\le d $, then the ball $ B_{i,n} $ may intersect multiple rectangles in $ \rc_n $ along the direction of the $ k $-th axis. In this case, since
	\[\bigcup_{j_k'=1}^{N_{k,n}}R_{n,(j_1,\dots,j_{k-1},j_k',j_{k+1},\dots, j_d)}\subset I_{n,j_1}^{(1)}\times\cdots\times I_{n,j_{k-1}}^{(k-1)}\times [0,1]\times I_{n,j_{k+1}}^{(k+1)}\times\cdots\times I_{n,j_d}^{(d)},\]
	the right of the above inclusion can be covered by
	\begin{equation}\label{eq:Bni(R)times}
		 \#  \cb_{i,n}(R)\cdot \frac{1}{6\psi_i(n)|\beta_i|^{-n}}
	\end{equation}
	balls of diameter $ 6\psi_i(n)|\beta_i|^{-n} $. Let \[\begin{split}
		\ck_{n,1}(i):&=\{1\le k\le d:|\beta_k|^{-n}<6\psi_i(n)|\beta_i|^{-n}\}\\
		&=\bigg\{1\le k\le d:\log|\beta_k|>-\frac{\log6+\log\psi_i(n)}{n}+\log|\beta_i|\bigg\},
	\end{split}\]
	and
	\[\begin{split}
		\ck_{n,2}(i):&=\{1\le k\le d:\psi_k(n)|\beta_k|^{-n}\ge \psi_i(n)|\beta_i|^{-n}\}\\
		&=\bigg\{1\le k\le d:-\frac{\log\psi_k(n)}{n}+\log|\beta_k|\le-\frac{\log\psi_i(n)}{n}+\log|\beta_i|\bigg\}.
	\end{split}\]
	Note that by \eqref{eq:Nn upper bound}, for any $ \epsilon>0 $,
	\[N_{k,n}\le |\beta_k|^{n(1+\epsilon)}\]
	for $ n $ sufficiently large.
	Then, in view of \eqref{eq:Bni(R) upper bound} and \eqref{eq:Bni(R)times}, there is a collection $ \cb_{i,n} $ of balls that covers the set $ R_n $ with
	\[\begin{split}
			 \#  \cb_{i,n}&\le \prod_{k\notin \ck_{n,1}(i)}|\beta_k|^{n(1+\epsilon)}\ \cdot\prod_{k\in \ck_{n,1}(i)}\frac{1}{6\psi_i(n)|\beta_i|^{-n}}\ \cdot\prod_{k\in\ck_{n,2}(i)}\frac{\psi_k(n)|\beta_k|^{-n}}{\psi_i(n)|\beta_i|^{-n}}\\
			 &= \prod_{k\notin \ck_{n,1}(i)}|\beta_k|^{n(1+\epsilon)}\ \cdot\prod_{k\in \ck_{n,1}(i)}\frac{|\beta_k|^n\cdot|\beta_k|^{-n}}{6\psi_i(n)|\beta_i|^{-n}}\ \cdot\prod_{k\in\ck_{n,2}(i)}\frac{\psi_k(n)|\beta_k|^{-n}}{\psi_i(n)|\beta_i|^{-n}}\\
			&\le\prod_{k=1}^d|\beta_k|^{n(1+\epsilon)}\cdot\prod_{k\in\ck_{n,1}(i)}\frac{|\beta_k|^{-n}}{\psi_i(n)|\beta_i|^{-n}}\cdot\prod_{k\in\ck_{n,2}(i)}\frac{\psi_k(n)|\beta_k|^{-n}}{\psi_i(n)|\beta_i|^{-n}}.
		\end{split}\]
	Note that the estimation of $ \# \cb_{i,n} $ is nearly identical to that of shrinking target problems \cite[Page 43]{LLVZ22}, with the only difference being the multiplicative term $ \prod_{k=1}^{d}|\beta_k|^{n\epsilon} $. However, this term does not affect the upper estimation of $ \hdim\rc(\Psi) $ because $ \epsilon $ can be made arbitrarily small. Thus, by utilizing an argument described in \cite[Pages 43--46]{LLVZ22}, we are able to establish the following Proposition \ref{p:dimension upper bound} on the upper bound for $ \hdim\rc(\Psi) $.

\begin{prop}\label{p:dimension upper bound}
	Under the setting of Theorem \ref{t:dimension}, we have
	\[\hdim\rc(\Psi)\le\sup_{\bt\in\cu(\Psi)}\min_{1\le i\le d}\theta_i(\bt).\]
\end{prop}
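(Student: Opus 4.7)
The plan is to apply the definition of Hausdorff dimension to the natural cover produced in the excerpt. Since $\rc(\Psi)\subset\bigcup_{n\ge N}R_n$ for every $N$, and since, for each $n$ and each $1\le i\le d$, the family $\cb_{i,n}$ covers $R_n$ by balls of diameter $6\psi_i(n)|\beta_i|^{-n}$, one has
\begin{equation*}
\mathcal{H}^s(\rc(\Psi))\;\le\;\liminf_{N\to\infty}\sum_{n\ge N}\#\cb_{i(n),n}\cdot\bigl(6\psi_{i(n)}(n)|\beta_{i(n)}|^{-n}\bigr)^s
\end{equation*}
for \emph{any} choice of indices $i(n)\in\{1,\dots,d\}$. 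The aim is to pick $i(n)$ so that the series is summable, in fact with vanishing tail, whenever $s>s_0:=\sup_{\bt\in\cu(\Psi)}\min_i\theta_i(\bt)$.

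Write $\bt_n:=(-\tfrac{\log\psi_1(n)}{n},\dots,-\tfrac{\log\psi_d(n)}{n})$ and $\sigma_{n,i}:=\log|\beta_i|+t_{n,i}\ge\log|\beta_i|>0$ (the second inequality uses the harmless reduction $\psi_i\le 1$). Substituting the bound on $\#\cb_{i,n}$ proved in the excerpt into the summand and splitting the product according to $\ck_{n,1}(i),\ck_{n,2}(i),\ck_{n,3}(i)$ (the analogues of $\ck_j(i)$ determined by $\bt_n$ instead of $\bt$), a direct rearrangement yields
\begin{equation*}
\tfrac{1}{n}\log\!\Bigl(\#\cb_{i,n}\cdot\bigl(6\psi_i(n)|\beta_i|^{-n}\bigr)^s\Bigr)\;\le\;\sigma_{n,i}\bigl(\theta_i(\bt_n)-s\bigr)+\epsilon\sum_{k=1}^{d}\log|\beta_k|+O(1/n),
\end{equation*}
where $\theta_i(\bt_n)$ denotes the same combinatorial expression as in the statement, evaluated at $\bt_n$ with the partition determined by $\bt_n$. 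Any strict inequality $\theta_i(\bt_n)<s-\eta$ therefore produces exponential decay of the summand at rate at least $\eta\log|\beta_i|$, modulo the $\epsilon$-error, which is tamed by choosing $\epsilon$ small.

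It remains to arrange this for some $i=i(n)$. First I would verify that each $\theta_i$ is continuous in $\bt$: at a boundary point where $\log|\beta_k|=\sigma_i$ the index $k$ switches between $\ck_1(i)$ and $\ck_3(i)$, and both contributions equal $1$ there, while at $\sigma_k=\sigma_i$ the switch is between $\ck_2(i)$ and $\ck_3(i)$ with both contributions equal to $1-t_k/\sigma_i$. Hence $\bt\mapsto\min_i\theta_i(\bt)$ is continuous. The boundedness of $\cu(\Psi)$ forces $\{\bt_n\}$ to be eventually bounded; every subsequential limit lies in $\cu(\Psi)$, and continuity together with a standard subsequence argument yields $\limsup_{n\to\infty}\min_i\theta_i(\bt_n)\le s_0$. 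Given $s>s_0$ and $\eta>0$ with $s_0+2\eta<s$, one picks $\epsilon$ small enough to neutralise the correction and defines $i(n)$ as a minimiser of $\theta_i(\bt_n)$; then $\theta_{i(n)}(\bt_n)<s-\eta$ for all large $n$ and the tail of the series is geometrically small. Hence $\mathcal{H}^s(\rc(\Psi))=0$, and letting $s\downarrow s_0$ gives $\hdim\rc(\Psi)\le s_0$. The main obstacle I anticipate is the continuity check for $\theta_i$ across the boundaries of the combinatorial partition, since the description of $\theta_i$ switches branches there; beyond that the argument is a cover-and-sum estimate paralleling the shrinking-target analysis of \cite[Pages 43--46]{LLVZ22}.
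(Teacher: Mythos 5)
Your proposal is correct and follows essentially the same cover-and-sum route that the paper delegates to \cite[Pages~43--46]{LLVZ22}: you take the cover $\cb_{i,n}$ built in the text, plug in the counting bound $\#\cb_{i,n}\le\prod_{k}|\beta_k|^{n(1+\epsilon)}\cdot\prod_{k\in\ck_{n,1}(i)}|\beta_k|^{-n}/(\psi_i(n)|\beta_i|^{-n})\cdot\prod_{k\in\ck_{n,2}(i)}\psi_k(n)|\beta_k|^{-n}/(\psi_i(n)|\beta_i|^{-n})$, and observe that the exponent of the $s$-weighted summand is $\sigma_{n,i}(\theta_i(\bt_n)-s)$ up to $\epsilon\sum_k\log|\beta_k|+O(1/n)$, which is exactly the algebraic identity one checks when rewriting $\sum_k\log|\beta_k|$ over the three index classes $\ck_{n,1}(i),\ck_{n,2}(i),\ck_{n,3}(i)$. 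Your continuity verification for $\theta_i$ across the partition boundaries is also exactly the point that makes the passage from $\theta_i(\bt_n)$ to $\sup_{\bt\in\cu(\Psi)}$ work.

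One small caveat: the assertion ``the boundedness of $\cu(\Psi)$ forces $\{\bt_n\}$ to be eventually bounded'' is not automatic from the definitions (a sequence in $[0,\infty)^d$ can have a bounded set of finite accumulation points while containing a subsequence escaping to infinity). The paper implicitly treats the hypothesis ``$\cu(\Psi)$ bounded'' as synonymous with boundedness of $\{\bt_n\}$, consistent with \cite{LLVZ22}, so this is a loose end in the assumptions rather than a flaw in your argument; but if you wanted to make the step airtight you would either strengthen the hypothesis to boundedness of $\{\bt_n\}$, or handle the sparse subsequence with $|\bt_n|\to\infty$ separately (the corresponding target rectangles are super-exponentially small, so their contribution to the cover sum can be estimated directly).

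Also, be aware that the index classes $\ck_{n,j}(i)$ in the paper carry a $\log 6/n$ offset relative to the ``clean'' classes determined by $\bt_n$, and may in principle overlap. However, the resulting overcount in $\#\cb_{i,n}$ is a multiplicative factor at most $6$ per coordinate, hence $O(1/n)$ in the normalized log, so it is absorbed by your error term exactly as you claim.
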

\subsection{Lower bound for $ \hdim\rc(\Psi) $}
The proof of the lower bound of $ \hdim\rc(\Psi) $ relies on a recent result of Wang and Wu \cite[Theorem 3.4]{WaWu21}, called the mass transference principle from rectangles to rectangles. For our purpose, we borrow a simplified version of Wang and Wu's result from \cite[Theorem 11]{LLVZ22}.

Let $ \nu $ be a probability measure supported on a locally compact set $ X\subset\R $. We say that $ \nu $ is $ \delta $-Ahlfors regular if there exist constants $ \delta>0 $, $ 0<a\le 1\le b<\infty $ and $ r_0>0 $ such that for any $ x\in X $ and $ r<r_0 $,
\begin{equation}\label{eq:Ahlfors regular}
	ar^\delta\le \nu(B(x,r))\le br^\delta,
\end{equation}
where $ B(x,r) $ is the ball with center $ x\in X $ and radius $ r $.
\begin{thm}[{\cite[Theorem 3.4]{WaWu21}}]\label{t:MTP RtoR}
	For each $ 1\le i\le d $, let $ X_i $ be a locally compact subset of $ \R $ equipped with a $ \delta_i $-Ahlfors regular measure $ \nu_i $. Let $ \{B(x_{i,n},r_{i,n})\}_{n\in\N} $ be a sequence of balls in $ X_i $ with radius $ r_{i,n}\to 0 $ as $ n\to\infty $ for each $ 1\le i\le d $ and assume that there exist $ \bv=(v_1,\dots,v_d)\in(\R_{\ge 0})^d $ and a sequence $ \{r_n\}_{n\in\N} $ of positive real numbers such that
	\[r_{i,n}=r_n^{v_i} \qquad\text{for all $ 1\le i\le d $}.\]
   Suppose that there exists $ (s_1,\dots,s_d)\in \prod_{i=1}^{d}(0,\delta_i) $ such that
	\begin{equation}\label{eq:enlarged ball full measure}
		\nu_1\times\cdots\times \nu_d\bigg(\limsup_{n\to\infty}\prod_{i=1}^{d}B\big(x_{i,n}, r_{i,n}^{s_i/\delta_i}\big)\bigg)=\nu_1\times\cdots\times \nu_d\bigg(\prod_{i=1}^{d}X_i\bigg).
	\end{equation}
	Then, we have
	\[\hdim \bigg(\limsup_{n\to\infty}\prod_{i=1}^dB(x_{i,n},r_{i,n})\bigg)\ge \min_{1\le i\le d}s(\bu,\bv,i),\]
	 where $ \bu=(u_1,\dots,u_d) $ with $ u_i=s_iv_i/\delta_i $ for $ 1\le i\le d $, and
	\[s(\bu,\bv,i):=\sum_{i\in\ck_1(i)}\delta_k+\sum_{i\in\ck_2(i)}\delta_k\bigg(1-\frac{v_k-u_k}{v_i}\bigg)+\sum_{i\in\ck_3(i)}\frac{\delta_ku_k}{v_i},\]
	with the sets
	\[\ck_1(i):=\{1\le k\le d:u_k\ge v_i\},\quad \ck_2(i):=\{1\le k\le d:v_k\le v_i\},\]
	and
	\[\ck_3(i):=\{1,\dots,d\}\setminus (\ck_1(i)\cup\ck_2(i))\]
	forming a partition of $ \{1,\dots,d\} $.
\end{thm}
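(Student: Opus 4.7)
The plan is to reconstruct Wang and Wu's mass transference principle by carving out a Cantor-type subset of
\[\Lambda := \limsup_{n\to\infty}\prod_{i=1}^d B(x_{i,n},r_{i,n})\]
that supports a probability measure $\mu$ whose local H\"older exponent is at least $\min_i s(\bu,\bv,i)$, and then invoking the mass distribution principle.

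First I would use hypothesis \eqref{eq:enlarged ball full measure} to drive an iterative construction. At each stage, since the enlarged rectangles $\prod_i B(x_{i,n},r_{i,n}^{s_i/\delta_i})$ cover $\prod_i X_i$ with respect to $\nu_1\times\cdots\times\nu_d$ infinitely often, I would extract a large finite disjoint sub-family whose combined measure is arbitrarily close to $\prod_i\nu_i(X_i)$. Inside each selected enlarged rectangle, the corresponding \emph{unenlarged} rectangle $\prod_i B(x_{i,n},r_{i,n})$ is strictly smaller in every coordinate (because $s_i/\delta_i<1$), so it fits comfortably and serves as a level-one Cantor block. Iterating the procedure inside each block, passing to much later indices $n$ so that scales are well-separated, produces a nested sequence of compact sets $E_1\supset E_2\supset\cdots$ whose intersection $K$ is contained in $\Lambda$. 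The measure $\mu$ is defined recursively by assigning each level-$k$ block mass proportional to $\prod_i\nu_i$ of its factors and distributing it uniformly among its offspring at level $k+1$.

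The hard part will be the local dimension estimate for $\mu$, which must hold uniformly over balls $B(\bx,\rho)$ of arbitrary radius. Because level-$k$ blocks have strongly anisotropic sides of order $r_{n_k}^{v_i}$, for each such $\rho$ I would identify the coordinate $i$ whose sidelength matches $\rho$ best; the remaining coordinates $k$ then split into the three regimes $\ck_1(i)$, $\ck_2(i)$, $\ck_3(i)$, according to whether a ball of this radius swallows the $k$-th factor of the offspring tree entirely, resolves only a thin slab of it, or sees an intermediate scale. Combining the Ahlfors regularity of each $\nu_k$ with uniform offspring weights yields respective contributions $\delta_k$, $\delta_k(1-(v_k-u_k)/v_i)$, and $\delta_k u_k/v_i$ to the effective exponent, whose sum is exactly $s(\bu,\bv,i)$. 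Taking the worst case over $i$ gives $\mu(B(\bx,\rho))\lesssim \rho^{\,\min_i s(\bu,\bv,i)}$; the mass distribution principle then delivers $\hdim K\ge \min_{1\le i\le d} s(\bu,\bv,i)$, and hence the same lower bound for $\Lambda$. The chief difficulty is keeping the bookkeeping in the three-regime case analysis consistent with the choice of offspring density at every scale; everything else is structural.
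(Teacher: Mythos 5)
The paper does not prove Theorem \ref{t:MTP RtoR}; it is imported verbatim as a citation of Wang and Wu \cite[Theorem 3.4]{WaWu21} (in the simplified form appearing as \cite[Theorem 11]{LLVZ22}), and is then \emph{applied} to establish the lower bound for $\hdim\rc(\Psi)$. So there is no in-paper proof for your sketch to be measured against. That said, your outline does reflect the standard architecture that Wang and Wu actually use: build a Cantor-type subset of $\limsup_n\prod_i B(x_{i,n},r_{i,n})$ from the full-measure hypothesis on the enlarged balls, equip it with a recursively defined measure, estimate the local H\"older exponent of that measure by a three-regime analysis according to how a ball of radius $\rho$ interacts with the anisotropic side-lengths $r_n^{v_i}$, and finish with the mass distribution principle. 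Your identification of the regimes with $\ck_1(i),\ck_2(i),\ck_3(i)$ and the corresponding exponent contributions $\delta_k$, $\delta_k(1-(v_k-u_k)/v_i)$, $\delta_k u_k/v_i$ is the right bookkeeping.

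Two substantive gaps you should be aware of if you intend to push this sketch into an actual proof. First, the step ``extract a large finite disjoint sub-family whose combined measure is arbitrarily close to $\prod_i\nu_i(X_i)$'' is not automatic for anisotropic rectangles: a Vitali-type covering lemma is available for balls of a fixed shape, but for rectangles whose aspect ratios vary with $n$ one needs a KGB-type / ubiquity-style covering argument at a single scale $n$ chosen so that all surviving rectangles are essentially homothetic; Wang and Wu address this carefully. Second, your claim that the unenlarged rectangle $\prod_i B(x_{i,n},r_{i,n})$ ``fits comfortably'' inside the enlarged one and can simply serve as the next-level block glosses over the need to further subdivide each rectangle into a grid of $\nu$-comparable cells before distributing mass — otherwise the recursive measure does not inherit the Ahlfors regularity in each coordinate and the local-dimension estimate fails in the $\ck_3(i)$ regime. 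Both points are filled in the Wang--Wu source, but they are the genuine content, not merely ``structural.''
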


The construction of the desired Cantor subset will make use of a general statement concerning Markov subsystems that has been explored in \cite[\S 4.3]{LLVZ22}.
Let $ X $ be a locally compact set in $ \R $ and $ f\colon X\to X $ be an expanding map. Let $ \Lambda $ be a subset of $ X $. A partition $ \cp_\Lambda $ of $ \Lambda $ into finite or countable collection of sets $ P(k) $ is called a {\em Markov partition} if $ \Lambda=\cap_{n=1}^\infty f^{-n}(\cup P(k)) $ and
\begin{enumerate}[(i)]
	\item if $ j\ne k $, then the interior of $ P(j) $ and $ P(k) $ are disjoint,
	\item $ f $ restricted on each $ P(j) $ is one to one,
	\item if $ f(P(j)) $ intersects the interior of $ P(k) $ for some $ j $ and $ k $, then $ P(k)\subset \overline{f(P(j))} $.
\end{enumerate}
The system $ (\Lambda, f|_\Lambda,\cp_\Lambda) $ is called a Markov subsystem of $ (X,f) $.


Here and below, when $ f $ is a piecewise linear function having a constant slope, we denote by $ \beta(f) $ the absolute value of such slope.

\begin{prop}[{\cite[Proposition 7]{LLVZ22}}]\label{p:Markov subsystem}
	Let $ f $ be a piecewise linear function on $ [0,1] $ having a constant slope. Assume that $ \beta(f)>8 $. Then there exists a Markov subsystem $ (\Lambda, f|_{\Lambda},\cp_\Lambda) $ of $ ([0,1],f) $ with a finite partition $ \cp_\Lambda=\{P(i)\} $ where each $ P(i) $ is an interval and $ f|_{P(i)} $ is linear, such that
	\[\hdim \Lambda\ge 1-\frac{\log 8}{\log\beta(f)}.\]
\end{prop}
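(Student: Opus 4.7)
The plan is to construct the Markov subsystem explicitly as a self-similar system of affine contractions, then read off its dimension from the Moran formula. Let $\beta = \beta(f)>8$ and let $J_1,\ldots,J_p$ denote the maximal intervals of linearity of $f$, so each $f|_{J_i}$ is affine of slope $\pm\beta$ and $|J_i|\le 1/\beta$. I would aim to select an integer $m\ge \beta/8$ and $m$ pairwise disjoint closed sub-intervals $P(1),\ldots,P(m)\subset[0,1]$, each of equal length $\ell$, each contained in some $J_{i(j)}$, and such that $f(P(j))\supset \bigcup_{k=1}^{m}P(k)$ for every $j$. Once this is done, the set $\Lambda:=\bigcap_{n\ge 0}f^{-n}\big(\bigcup_{j}P(j)\big)$ together with the partition $\mathcal{P}_\Lambda=\{P(1),\ldots,P(m)\}$ is a Markov subsystem by construction, and it is the attractor of an IFS of $m$ affine contractions of ratio $1/\beta$, hence $\hdim\Lambda=\log m/\log\beta\ge \log(\beta/8)/\log\beta = 1-\log 8/\log\beta$.

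To realize the selection, I would first fix the common length $\ell$ so that $f(P(j))$, which has length $\beta\ell$, is able to cover the convex hull of $\bigcup_k P(k)$. Placing the $P(j)$'s inside a host interval of length $\beta\ell$ and requiring them to be disjoint gives the constraint $m\ell\le\beta\ell$, so the length constraint is harmless; the real issue is the structural one of ensuring that each $P(j)$ lies entirely inside a single piece $J_{i(j)}$ of linearity of $f$, and that the image $f(P(j))$ is positioned to contain the union of all chosen $P(k)$. A natural way to arrange this is to partition $[0,1]$ into $\lceil \beta/2\rceil$ equal bins of length close to $2/\beta$, so that on each monotone branch $J_i$ (of length $\le 1/\beta$) there is at most one bin whose full preimage under $f|_{J_i}$ would map onto a prescribed target interval. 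A greedy or pigeonhole selection among these preimages then yields at least $\lfloor\beta/8\rfloor\ge\beta/8-1$ usable branches with the required Markov property.

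A secondary but necessary check is that the resulting set $\Lambda$ is the attractor of an IFS satisfying the open-set condition, so that the Moran formula $\sum(1/\beta)^s=1$ gives the exact dimension $\log m/\log\beta$; this is immediate from the pairwise disjointness of the $P(j)$ and the affine, invertible action of $f$ on each $P(j)$. The verification that $\mathcal{P}_\Lambda$ is a Markov partition in the sense of conditions (i)--(iii) preceding the proposition is then a direct check from $f(P(j))\supset\bigcup_k P(k)$ and the linearity of $f|_{P(j)}$.

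The main obstacle is precisely the combinatorial selection in the first paragraph: the factor $8$ in the bound is not sharp and reflects the various sources of waste one must budget for, namely (a) at most two ``boundary'' monotone pieces on which $f$ may fail to act fully, (b) the need to leave room between the $P(j)$'s so their union is strictly contained in a translate of $f(P(j))$, and (c) the requirement that each chosen $P(j)$ sit inside a single piece of linearity rather than straddling breakpoints. Tracking these losses carefully — and in particular arguing that together they cost at most a factor of $8$ — is the technical heart of the proof; everything after the selection step is routine self-similar dimension theory.
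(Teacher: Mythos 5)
The paper does not prove this proposition; it is stated as a direct citation of \cite[Proposition~7]{LLVZ22}, so there is no internal argument to compare your attempt against. I will therefore assess the proposal on its own terms.

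Your overall strategy — build a full-shift Markov subsystem consisting of $m$ disjoint intervals $P(1),\dots,P(m)$, each inside a single branch $J_{i(j)}$ and each satisfying $f(P(j))\supset\bigcup_k P(k)$, and then read off $\hdim\Lambda=\log m/\log\beta$ from the Moran equation — is a sensible route, and the post-selection steps (open set condition, Moran formula, verification of axioms (i)--(iii)) are indeed routine. But the combinatorial selection producing $m\ge\beta/8$ such intervals is the entire content of the proposition, and you do not actually carry it out. The pigeonhole sketch in your second paragraph does not resolve the tension built into your own set-up: if you fix a common target interval $I$ and set $P(j)=(f|_{J_{i(j)}})^{-1}(I)$, you need simultaneously $I\subset f(J_{i(j)})$ (a constraint on where $f$ sends the branch) and $(f|_{J_{i(j)}})^{-1}(I)\subset I$ (a constraint on where the branch itself sits), and for a general piecewise linear map with slope $\pm\beta$ these two conditions are essentially decoupled, so it is far from clear that a single $I$ is compatible with $\beta/8$ branches in both senses at once. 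You flag this selection yourself as ``the technical heart of the proof'' and concede that budgeting the losses to a factor of $8$ is not done; as written, the argument is therefore incomplete. A smaller slip: at the end of the second paragraph you only claim $m\ge\lfloor\beta/8\rfloor$, which can be strictly smaller than the required $\beta/8$ (for instance $\beta=9$ gives $\lfloor\beta/8\rfloor=1$, hence $\hdim\Lambda=0$, well below $1-\log 8/\log 9$), so even the target of the selection needs to be stated more carefully.
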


As an application to  $\beta$-transformation, we have the following proposition.

\begin{prop}\label{p:full shift}
	Let $\beta\in\R$ such that $ |\beta|>1 $ and let $ T_\beta $ be the corresponding $ \beta $-transformation. For any $ \epsilon>0 $, there exist an integer $ k=k(\epsilon) $ and a full subshift $ (\Lambda_k=\Lambda_k(\epsilon),T_\beta^k|_{\Lambda_k}) $ of $ ([0,1], T_\beta^k) $ with a finite partition such that
	\[\hdim \Lambda_k\ge 1-\epsilon.\]

\end{prop}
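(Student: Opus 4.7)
The plan is to apply Proposition~\ref{p:Markov subsystem} to the iterate $f := T_\beta^k$. Since $T_\beta$ is piecewise linear on each element of $\cq$ with constant absolute slope $|\beta|$, its $k$-th iterate $f = T_\beta^k$ is piecewise linear on each level-$k$ cylinder with constant absolute slope $\beta(f) = |\beta|^k$. Given $\epsilon>0$, pick $k = k(\epsilon)$ large enough that $|\beta|^k > 8$ and
\[
\frac{\log 8}{k\log|\beta|} < \epsilon.
\]
Then Proposition~\ref{p:Markov subsystem} applied to $f$ yields a Markov subsystem $(\Lambda_k, f|_{\Lambda_k}, \cp_{\Lambda_k})$ of $([0,1], f)$, with finite partition $\cp_{\Lambda_k} = \{P(i)\}$ into intervals on each of which $f$ is linear, satisfying
\[
\hdim \Lambda_k \;\geq\; 1 - \frac{\log 8}{\log \beta(f)} \;=\; 1 - \frac{\log 8}{k\log|\beta|} \;\geq\; 1 - \epsilon.
\]

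The remaining issue is upgrading \emph{Markov} to \emph{full} subshift. Inspecting the construction of \cite[Proposition 7]{LLVZ22} underpinning Proposition~\ref{p:Markov subsystem}, the intervals $P(i)$ are selected as a finite family of \emph{full branches} of $f$: each $P(i)$ is mapped bijectively and linearly by $f$ onto a common interval $J$ containing $\bigcup_i P(i)$. Consequently every symbolic transition from $P(i)$ to $P(j)$ is admissible, and $(\Lambda_k, f|_{\Lambda_k})$ is coded by a full shift on the finite alphabet indexing $\{P(i)\}$.

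The main (indeed only) delicate point is verifying this full-shift property. Should one prefer not to peer inside the construction of \cite[Proposition 7]{LLVZ22}, an elementary self-contained alternative is available for $\beta$-transformations: for any $k$, the number of level-$k$ \emph{full cylinders} of $T_\beta$ -- intervals of length $|\beta|^{-k}$ on which $T_\beta^k$ is a bijection onto $[0,1)$ -- is known to grow like $|\beta|^k$ up to a constant depending only on $\beta$. Selecting $N := \lfloor c|\beta|^k \rfloor$ of them for an appropriate constant $c>0$ and taking $\Lambda_k$ to be the set of points whose $T_\beta^k$-orbit is confined to their union produces, by construction, a full shift on $N$ symbols with
\[
\hdim \Lambda_k \;=\; \frac{\log N}{k\log|\beta|} \;\geq\; 1 - \epsilon
\]
once $k$ is taken sufficiently large. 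Either route yields the required full subshift, so the proposition follows.
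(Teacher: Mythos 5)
Your overall strategy (apply Proposition~\ref{p:Markov subsystem} and then pass to a full shift) is the same as the paper's, but the passage from ``Markov'' to ``full'' is exactly where the paper does essentially all of its work, and that is the step you have not actually supplied.

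Your first route asserts, by ``inspecting the construction'' of \cite[Proposition 7]{LLVZ22}, that the intervals $P(i)$ are full branches of $f$ mapping onto a common interval $J\supset\bigcup_i P(i)$, so that the Markov subsystem is automatically a full shift. This is not what \cite[Proposition 7]{LLVZ22} asserts (it only delivers a Markov subsystem), and the paper's own proof strongly suggests the claim is not available: after invoking Proposition~\ref{p:Markov subsystem}, the authors only extract that the induced incidence matrix $A$ is \emph{primitive} (topological mixing, citing page 55 of \cite{LLVZ22}), and then run a genuinely nontrivial combinatorial argument --- using Bowen's formula for $h_{\mathrm{top}}$ to produce $\ge e^{n(h_{\mathrm{top}}-1/2)}$ admissible $n$-words sharing a fixed initial symbol $i_0$ and final symbol $i_{n-1}$, and then splicing a connecting word $i_{n-1}i_1\cdots i_{\ell-2}i_0\in\Sigma_A^\ell$ so that concatenations of the resulting blocks are always admissible --- in order to manufacture a full shift for a \emph{longer} iterate $T_\beta^{m(n+\ell-2)}$. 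If the $P(i)$ were already full branches, none of this would be needed and the authors would not have written it. Your claim therefore requires independent justification, and without it there is a real gap in your argument.

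Your ``elementary alternative'' route, counting level-$k$ full cylinders of $T_\beta$ and showing their number is $\asymp |\beta|^k$, is plausible for $\beta>1$ (where this is essentially Bugeaud--Wang \cite{BuWa2014}), but you give no reference or proof, and more importantly Proposition~\ref{p:full shift} is stated for every $\beta\in\R$ with $|\beta|>1$, including $\beta<-1$. For negative $\beta$ the transformation $T_\beta x = \beta x - \lfloor\beta x\rfloor$ has a different cylinder structure, and you would need to check (or cite) that the full-cylinder count still grows at the right rate; this is precisely the obstruction that makes the paper go through the more robust Markov-subsystem / topological-mixing machinery rather than counting full cylinders directly. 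As written, neither of your two routes closes the gap between ``Markov subsystem with Hausdorff dimension $\ge 1-\epsilon$'' and ``full subshift with Hausdorff dimension $\ge 1-\epsilon$'', which is the crux of the proposition.
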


\begin{proof}
	For any $ \epsilon>0 $, let $ m $ be an integer such that
	\begin{equation}\label{eq:condition m}
		m>\frac{1+\log 8}{\epsilon\log|\beta|}.
	\end{equation}	Let $ (\Lambda,T_\beta^m|_{\Lambda}, \cp_{\Lambda}) $ be a Markov subsystem of $ ([0,1], T_\beta^m) $ coming from Proposition \ref{p:Markov subsystem}. Then,
	\begin{equation}\label{eq:lower bound entropy}
		h_{\mathrm{top}}(T_\beta^m|_\Lambda)=\hdim\Lambda\cdot \log\beta(T_\beta^m|_\Lambda)\ge m\log|\beta|-\log 8,
	\end{equation}
	since $ \beta(T_\beta^m|_\Lambda)=|\beta|^m $.
	In \cite[Page 55]{LLVZ22}, the authors further proved that such a Markov subsystem is topological mixing, which is equivalent to saying that the corresponding incidence matrix $ A=(a_{ij})_{1\le i,j\le\# \cp_\Lambda} $ is primitive (see \cite[Exercise 10.2.2]{ViOl16}).

		Denote by $ (\Sigma_A^\N, \sigma) $ with $\Sigma_A^\N\subset \{1,2,\dots,\#\cp_\Lambda\}^\N $ the corresponding symbolic space of the dynamics of $ T_\beta^m|_\Lambda $ induced by $ A $ and $ \Sigma_A^n $ the set of words of length $ n $ in $ \Sigma_A^\N $.
		By Bowen's definition of topological entropy (see \cite{Bowen73}),
		\[h_{\mathrm{top}}(T_\beta^m|_\Lambda)=\lim_{n\to \infty}\frac{1}{n}\log \#\Sigma_A^n.\]
		 With this definition in mind, we see that for any $ n $ large enough, there exist $ 1\le i_0, i_{n-1}\le \#\cp_\Lambda $ such that
		 \[\begin{split}
		 	\#\{\omega_0\omega_1\cdots \omega_{n-1}\in\Sigma_A^n:\omega_0=i_0, \omega_{n-1}=i_{n-1}\}&\ge \frac{1}{(\#\cp_\Lambda)^2}\cdot \#\Sigma_A^n\\
		 	&\ge \frac{1}{(\#\cp_\Lambda)^2}\cdot e^{n(h_{\mathrm{top}}(T_\beta^m|_\Lambda)-1/4)}\\
		 	&\ge e^{n(h_{\mathrm{top}}(T_\beta^m|_\Lambda)-1/2)}.
		 \end{split}\]
		 Since the incidence matrix $ A $ is primitive, we can find a word $ (i_{n-1}i_1i_2\cdots i_{\ell-2}i_{0}) $ which belongs to $ \Sigma_A^\ell $ for some integer $ \ell\ge 2 $. Let
		 \[\Sigma'=\{\omega_0\cdots \omega_{n+\ell-2}\in\Sigma_A^{n+\ell-2}:\omega_0=i_0, \omega_{n-1}=i_{n-1},\omega_n=i_1,\dots, \omega_{n+\ell-3}=i_{\ell-2}\}.\]
		 That is, the word in $ \Sigma' $ begins with $ i_0 $ and ends with $ i_{n-1}i_1i_2\cdots i_{\ell-2} $.
		 By the Markov property of $ \Sigma_A^\N $, it is not difficult to verify that the concatenation of any two words in $ \Sigma' $ belongs to $ \Sigma_A^{2(n+\ell-2)} $. It therefore follows that $ (\Sigma')^\N\subset \Sigma_A^\N $ and the symbolic space $ \big((\Sigma')^\N, \sigma^{n+\ell-2}\big) $ is a full shift. Moreover, by increasing the integer $ n $ if necessary, we have
		 \[\begin{split}
		 	\#\Sigma'&=\#\{\omega_0\omega_1\cdots \omega_{n-1}\in\Sigma_A^n:\omega_0=i_0, \omega_{n-1}=i_{n-1}\}\\
		 	&\ge e^{n(h_{\mathrm{top}}(T_\beta^m|_\Lambda)-1/2)}\\
		 	&\ge e^{(n+\ell-2)(h_{\mathrm{top}}(T_\beta^m|_\Lambda)-1)}.
		 \end{split}\]
	 	Hence,
	 	\[h_{\mathrm{top}}(\sigma^{n+\ell-2}|_{(\Sigma')^\N})\ge e^{(n+\ell-2)(h_{\mathrm{top}}(T_\beta^m|_\Lambda)-1)}.\]

		 Now, let $ k=m(n+\ell-2) $ and consider the projection $ \pi $ from $ (\Sigma')^\N $ to a compact subset $ \Lambda_k $ of $ \Lambda $ given by
		 \[\omega=(\omega_i)_{i\ge 0}\in(\Sigma')^\N\quad\mapsto\quad \pi(\omega)=\bigcap_{i=0}^\infty\, (T_\beta^m)^{-i}(P(\omega_i)).\]
		  Then, the dynamical system $ (\Lambda_k, T_\beta^k|_{\Lambda_k}) $ is a full subshift with a finite partition and the Hausdorff dimension of $ \Lambda_k $ is at least
		  \[\begin{split}
		  	\hdim \Lambda_k&=\frac{h_{\mathrm{top}}(T_\beta^k|_{\Lambda_k})}{\log \beta(T_\beta^k)}=\frac{h_{\mathrm{top}}(\sigma^{n+j-2}|_{(\Sigma')^\N})}{\log \beta(T_\beta^k)}\\
		  	&\ge \frac{(n+j-2)(h_{\mathrm{top}}(T_\beta^m|_\Lambda)-1)}{k\log|\beta|}\\
		  	&= \frac{h_{\mathrm{top}}(T_\beta^m|_\Lambda)-1}{m\log|\beta|}\ge 1-\frac{1+\log 8}{m\log|\beta|}\\
		  	&\ge 1-\epsilon,
		  \end{split}\]
	  	where the second to the last inequality follows from \eqref{eq:lower bound entropy}, and the last inequality follows from \eqref{eq:condition m}.
\end{proof}
Since $ T_{\beta}^k|_{\Lambda_{k}} $ is a full subshift and is a piecewise linear function with slope $ \beta^k $, $ \Lambda_k $ can be regarded as a homogeneous self-similar set. Therefore, $ \Lambda_k $ supports a $ \delta $-Ahlfors regular measure with $ \delta=\hdim\Lambda_k $. Following some ideas from \cite[\S 3.1]{CWW19}, one can show that for any sufficiently small $ r>0 $ and for all $ n\ge 1 $,
\begin{equation}\label{eq:rsup}
	\{x\in \Lambda_k:|T_\beta^{kn} x-x|<r\}\supset \bigcup_{j=1}^{M_{n}}B(x_{n,j}, |\beta|^{-kn}r)\cap \Lambda_k
\end{equation}
and
\begin{equation}\label{eq:union lamk}
	\Lambda_{k}=\bigcup_{j=1}^{M_n}B(x_{n,j}, |\beta|^{-kn})\cap\Lambda_k,
\end{equation}
where $ x_{n,j} $ is a point in $ \Lambda_k $ and $ M_n $ is the number of such points.

The lemma below is needed in constructing the desired Cantor subset of $ \rc(\Psi) $. We omit its proof since the argument is similar to that of \cite[Lemma 9]{LLVZ22}.
\begin{lem}\label{l:subsequence}
	For $ 1\le i\le d $, let $ \psi_i:\R_{\ge 0}\to\R_{\ge 0} $ be a positive decreasing function. Then, for any positive integer $ k $, the set of accumulation points of the sequence
	\[\bigg\{\bigg(-\frac{\log\psi_1(kn)}{kn},\dots,-\frac{\log\psi_d(kn)}{kn}\bigg)\bigg\}_{n\ge 1}\]
	is equal to $ \cu(\Psi) $.
\end{lem}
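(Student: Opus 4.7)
The plan is to verify the two inclusions separately. The inclusion $\supseteq$ is immediate since $\{(-\log\psi_i(kn)/(kn))_{i=1}^d\}_{n\ge 1}$ is a subsequence of $\{(-\log\psi_i(n)/n)_{i=1}^d\}_{n\ge 1}$, so every accumulation point of the former is automatically an accumulation point of the latter. For the reverse inclusion, set $a_n^{(i)}:=-\log\psi_i(n)/n$ and $b_n^{(i)}:=-\log\psi_i(n)$; the hypothesis that $\psi_i$ is positive and non-increasing makes $b_n^{(i)}$ non-decreasing in $n$.

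Given $\bt=(t_1,\ldots,t_d)\in\cu(\Psi)$, I choose $n_j\to\infty$ with $a_{n_j}\to\bt$ and write $n_j=km_j+r_j$ with $0\le r_j<k$. By the pigeonhole principle I pass to a subsequence on which $r_j$ equals a fixed $r\in\{0,1,\ldots,k-1\}$. If $r=0$, then $a_{km_j}=a_{n_j}\to\bt$ and the lemma is proved. Assume $r\ne 0$. The monotonicity of $b^{(i)}$ together with $km_j<n_j<k(m_j+1)$ produces the two one-sided estimates
\[a_{km_j}^{(i)}\le\frac{n_j}{km_j}\cdot a_{n_j}^{(i)}\longrightarrow t_i\quad\text{and}\quad a_{k(m_j+1)}^{(i)}\ge\frac{n_j}{km_j+k}\cdot a_{n_j}^{(i)}\longrightarrow t_i,\]
valid for every coordinate $i$; equivalently $\limsup_j a^{(i)}_{km_j}\le t_i$ and $\liminf_j a^{(i)}_{k(m_j+1)}\ge t_i$ for each $i$.

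The final step is to upgrade these one-sided bounds to the existence of a single subsequence $\{m'_j\}$ along which $a_{km'_j}\to\bt$ jointly in all coordinates. Here I invoke the slow-decrease inequality $a_{k(m+1)}^{(i)}\ge a_{km}^{(i)}\cdot m/(m+1)$, which is again a consequence of the monotonicity of $b^{(i)}$. For each fixed coordinate $i$ this forces the set of accumulation points of the scalar sequence $\{a^{(i)}_{km}\}_{m\ge 1}$ to be the closed interval $[\liminf_n a^{(i)}_n,\limsup_n a^{(i)}_n]$, and a Cantor-diagonal extraction across the $d$ coordinates then supplies the desired joint subsequence. The main obstacle is precisely this last extraction: the monotonicity of $b^{(i)}$ controls $a^{(i)}$ only from below between consecutive terms while upward jumps are unrestricted, so the two one-sided bounds in the previous display do not automatically coexist at a single index. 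Once this is overcome, the argument is complete, following the same strategy as \cite[Lemma~9]{LLVZ22}.
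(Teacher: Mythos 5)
Your one-sided estimates $\limsup_j a^{(i)}_{km_j}\le t_i$ and $\liminf_j a^{(i)}_{k(m_j+1)}\ge t_i$ are correct, and so is the per-coordinate consequence that each scalar sequence $\{a^{(i)}_{km}\}_{m}$ has $t_i$ as an accumulation point (the inclusion labels $\supseteq$ and $\subseteq$ in your first sentence are swapped, but that is cosmetic). The last step, however, is a genuine gap rather than a detail that Cantor diagonalisation supplies. Diagonalisation extracts a jointly convergent subsequence from a \emph{nested} chain of subsequences; what you have instead is, for each $i$, some set $M_i\subset\N$ of indices along which $a^{(i)}_{km}\to t_i$, and nothing forces the $M_i$ to share an infinite subset. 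The slow-decrease inequality $a^{(i)}_{k(m+1)}\ge a^{(i)}_{km}\cdot m/(m+1)$ is a constraint on each coordinate separately and does not couple distinct coordinates, so it cannot produce such a common subset.

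This obstruction is intrinsic to the per-coordinate reduction. For $d=2$, $k=2$, take odd integers with $n_{j+1}=2n_j+3$ and define non-decreasing step functions $b^{(1)},b^{(2)}$ with $b^{(1)}(n_j)=b^{(2)}(n_j)=n_j$ for all $j$, while on the interior $(n_j,n_{j+1})$ one of the two is held constant at $n_j$ and the other at $n_{j+1}$, the roles alternating from block to block. Setting $\psi_i=e^{-b^{(i)}}$ gives positive non-increasing functions with $a(n_j)=(1,1)$ for all $j$, so $(1,1)\in\cu(\Psi)$; yet for every even $n\in(n_j,n_{j+1})$ the pair is $(n_j/n,\,n_{j+1}/n)$ up to a swap, whose max-norm distance from $(1,1)$ exceeds $1/4$ because $n_{j+1}-n_j\ge n_j$ while $n<n_{j+1}\le 2n_j+3$. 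Hence $1$ is an accumulation point of each coordinate along even indices, but $(1,1)$ is never approached jointly along even indices. Any correct argument must therefore use that a \emph{single} sequence $n_j$ realises all coordinates of $\bt$ simultaneously — information your passage to per-coordinate $\limsup$/$\liminf$ discards — and it is worth checking the precise statement and hypotheses of \cite[Lemma 9]{LLVZ22} (and what is actually needed in the lower-bound proof, which only uses the value of $\sup_{\bt\in\cu(\Psi)}\min_i\theta_i(\bt)$) to see how, or whether, this step can be completed.
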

The lower bound for the Hausdorff dimension of $ \rc(\Psi) $ can be proved by using the same method as in \cite[Proposition 5]{LLVZ22}, we include it for completeness.
\begin{prop}
		Under the setting of Theorem \ref{t:dimension}, we have
	\[\hdim\rc(\Psi)\ge\sup_{\bt\in\cu(\Psi)}\min_{1\le i\le d}\theta_i(\bt).\]
\end{prop}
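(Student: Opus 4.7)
The plan is to follow the strategy of \cite[Proposition 5]{LLVZ22} by combining the Markov subsystem construction with the mass transference principle from rectangles to rectangles. Fix $\bt=(t_1,\dots,t_d)\in \cu(\Psi)$ and an arbitrary $\epsilon>0$; it will suffice to show that $\hdim\rc(\Psi)\ge \min_{1\le i\le d}\theta_i(\bt)-O(\epsilon)$, since $\epsilon$ is arbitrary. First I would apply Proposition \ref{p:full shift} to each coordinate, producing for every $1\le i\le d$ an integer $k_i$ and a full subshift $\Lambda_{k_i}$ of $([0,1],T_{\beta_i}^{k_i})$ with $\hdim\Lambda_{k_i}\ge 1-\epsilon$; passing to a common multiple $k$ of the $k_i$'s ensures that each $\Lambda_{k_i}$ is invariant under the iterate $T_{\beta_i}^k$ and is itself a full subshift with slope $|\beta_i|^k$. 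In particular, each $\Lambda_{k_i}$ is a homogeneous self-similar set and carries a $\delta_i$-Ahlfors regular probability measure $\nu_i$ with $\delta_i=\hdim\Lambda_{k_i}\ge 1-\epsilon$.

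Next I would select, via Lemma \ref{l:subsequence}, a subsequence $\{n_\ell\}$ along which $-\log\psi_i(kn_\ell)/(kn_\ell)\to t_i$ for every $1\le i\le d$. Using \eqref{eq:rsup} and \eqref{eq:union lamk} applied to each coordinate $T_{\beta_i}^k$, for every $\ell$ I obtain finite collections of centers $\{x_{n_\ell,j_i}^{(i)}\}$ in $\Lambda_{k_i}$ such that the balls $B(x_{n_\ell,j_i}^{(i)},|\beta_i|^{-kn_\ell})$ cover $\Lambda_{k_i}$, while the shrunken balls $B(x_{n_\ell,j_i}^{(i)},\psi_i(kn_\ell)|\beta_i|^{-kn_\ell})$ lie inside $\{x\in\Lambda_{k_i}:|T_{\beta_i}^{kn_\ell}x-x|<\psi_i(kn_\ell)\}$. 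Taking products over $(j_1,\dots,j_d)$ therefore produces rectangles
\[
\prod_{i=1}^{d}B\bigl(x_{n_\ell,j_i}^{(i)},\psi_i(kn_\ell)|\beta_i|^{-kn_\ell}\bigr)
\]
contained in $\rc(\Psi)\cap\prod_i\Lambda_{k_i}$, while the enlarged products $\prod_i B(x_{n_\ell,j_i}^{(i)},|\beta_i|^{-kn_\ell})$ cover $\prod_i\Lambda_{k_i}$. Hence the limsup of the enlarged products has full $\prod_i\nu_i$-measure, which is exactly the hypothesis \eqref{eq:enlarged ball full measure} of the mass transference principle (Theorem \ref{t:MTP RtoR}).

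Now I would apply Theorem \ref{t:MTP RtoR}. Choosing the scale $r_{n_\ell}=e^{-kn_\ell}$ and writing $\psi_i(kn_\ell)|\beta_i|^{-kn_\ell}=r_{n_\ell}^{v_i^{(\ell)}}$ with $v_i^{(\ell)}=\log|\beta_i|-\log\psi_i(kn_\ell)/(kn_\ell)$, the subsequence is engineered so that $v_i^{(\ell)}\to v_i:=\log|\beta_i|+t_i$. Solving $r_{i,n_\ell}^{s_i/\delta_i}=|\beta_i|^{-kn_\ell}$ gives $u_i:=s_iv_i/\delta_i=\log|\beta_i|$. With these parameters, a direct substitution in the formula for $s(\bu,\bv,i)$ in Theorem \ref{t:MTP RtoR} shows that the partitions $\ck_1(i),\ck_2(i),\ck_3(i)$ from the theorem coincide (up to the strict/non-strict boundary, which has Lebesgue-measure zero effect) with those defining $\theta_i(\bt)$, and
\[
s(\bu,\bv,i)=\delta_i\cdot\theta_i(\bt)+O(\epsilon)\ge (1-\epsilon)\,\theta_i(\bt)+O(\epsilon).
\]
Technically one handles the $\ell$-dependence by passing to a further subsequence so that all $v_i^{(\ell)}$ converge, and invoking MTP along this subsequence of the limsup. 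Taking the minimum over $i$, then letting $\epsilon\to 0$ and taking the supremum over $\bt\in\cu(\Psi)$ yields the claimed lower bound.

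The main obstacle I anticipate is the bookkeeping needed to reconcile the two roles of the subsequence: guaranteeing $v_i^{(\ell)}\to v_i$ simultaneously for all $i$, while ensuring that the limsup set along this subsequence is still a subset of $\rc(\Psi)$ and satisfies the full-measure hypothesis \eqref{eq:enlarged ball full measure}. Carefully matching the exponents $v_i$, $u_i$, $\delta_i$ so that $s(\bu,\bv,i)$ actually equals $\theta_i(\bt)$ in the limit (rather than some rearrangement) is the only delicate algebraic verification; the rest of the argument reduces to a standard application of MTP on the product Ahlfors regular space $\prod_{i=1}^{d}\Lambda_{k_i}$.
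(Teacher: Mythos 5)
Your overall strategy is the same as the paper's: produce, for each coordinate, a high-dimensional full subshift $\Lambda_k^{(i)}$ via Proposition~\ref{p:full shift}, build a $\limsup$-of-rectangles subset of $\rc(\Psi)$ via \eqref{eq:rsup}, verify the enlarged-ball full-measure hypothesis using \eqref{eq:union lamk}, apply Theorem~\ref{t:MTP RtoR}, and let $\epsilon\to 0$. That much is correct, and your handling of the common multiple $k$ and the product Ahlfors-regular structure is fine.

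There is, however, a genuine gap in your choice of exponents. You set $v_i=\log|\beta_i|+t_i$ and $u_i=\log|\beta_i|$, so that $s_i/\delta_i=u_i/v_i=\log|\beta_i|/(\log|\beta_i|+t_i)$. Verifying the hypothesis~\eqref{eq:enlarged ball full measure} boils down, via \eqref{eq:cover whole space}, to checking that
\[
\bigl(\psi_i(kn_\ell)\,|\beta_i|^{-kn_\ell}\bigr)^{s_i/\delta_i}\ \ge\ |\beta_i|^{-kn_\ell}
\]
for all large $\ell$. Writing $a_{i,\ell}:=-\log\psi_i(kn_\ell)/(kn_\ell)$, with your $s_i/\delta_i$ this inequality is equivalent to $a_{i,\ell}\le t_i$. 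But the subsequence only gives $a_{i,\ell}\to t_i$; it is entirely possible that $a_{i,\ell}>t_i$ for infinitely many $\ell$, in which case the scaled-up balls do \emph{not} cover and the full-measure hypothesis of the mass transference principle fails. This is precisely why the paper introduces the slack factor $(1-\epsilon)$: taking $u_i=(1-\epsilon)\log|\beta_i|$ and $v_i=(1-\epsilon)\log|\beta_i|+t_i$ (so $s_i=\delta_i(1-\epsilon)\log|\beta_i|/((1-\epsilon)\log|\beta_i|+t_i)$) weakens the required inequality~\eqref{eq: log|beta|<} to $a_{i,\ell}\le t_i/(1-\epsilon)$, which \emph{does} hold for all large $\ell$ when $t_i>0$. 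The same slack also addresses the issue you flag about $v_i^{(\ell)}$ not being constant in $\ell$, since one then works with the fixed $\bv,\bu,s_i$ above and only needs the one-sided inequality. Your concluding estimate ``$s(\bu,\bv,i)=\delta_i\theta_i(\bt)+O(\epsilon)$'' is also slightly off in form (the formula for $s(\bu,\bv,i)$ involves $\delta_k$ for each $k$, not a single prefactor $\delta_i$), but that is easily repaired; the missing $(1-\epsilon)$ slack in $u_i,v_i$ is the substantive gap.
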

\begin{proof}

	Given $ \epsilon>0 $. For $ 1\le i\le d $, let
	\[\left(\Lambda_{k}^{(i)}=\Lambda_k^{(i)}(\epsilon),\, T_{\beta_i}^k|_{\Lambda_{k}^{(i)}}\right)\]
	 be the full subshift arising from Proposition \ref{p:full shift} satisfying
	\[\hdim \Lambda_{k}^{(i)}\ge 1-\epsilon.\]
	By \eqref{eq:rsup}, we can construct a $ \limsup $ type subset of $ \rc(\Psi) $ as follows
	\begin{equation}\label{eq:Cantor subset}
		\rc(\Psi)\supset\limsup_{n\to\infty}\bigcup_{j_1=1}^{M_{1,n}}\cdots \bigcup_{j_d=1}^{M_{d,n}}\prod_{i=1}^{d}B\big(x_{n,j_i}^{(i)}, |\beta_i|^{-kn}\psi_i(kn)\big)\cap \Lambda_k^{(i)},
	\end{equation}
	where $ x_{n,j_i}^{(i)} $ is a point in $ \Lambda_{k}^{(i)} $ and $ M_{i,n} $ is the number of such points. Further, by \eqref{eq:union lamk}, we have for any $ n\ge 1 $,
	\begin{equation}\label{eq:cover whole space}
		\Lambda_{k}^{(i)}=\bigcup_{j_i=1}^{M_{i,n}}B\big(x_{n,j_i}^{(i)}, |\beta_i|^{-kn}\big)\cap\Lambda_k^{(i)}.
	\end{equation}

	 Now fix a point $ \bt=(t_1,\dots, t_d)\in\cu(\Psi) $. By Lemma \ref{l:subsequence} and the fact that $ \cu(\Psi) $ is bounded, there exists a subsequence $ (n_\ell)_{\ell\ge 1} $ such that
	\[\lim_{\ell\to\infty}\frac{-\log \psi_i(kn_\ell)}{kn_\ell}=t_i\quad\text{for all $ 1\le i\le d $}.\]
	For any $ 0<\epsilon<1 $, there exists $ N=N(\epsilon)>0 $ such that
	\begin{equation}\label{eq: log|beta|<}
		\frac{(1-\epsilon)\log|\beta_i|}{(1-\epsilon)\log|\beta_i|+t_i}\le\frac{\log|\beta_i|}{\log|\beta_i|+\frac{-\log\psi_i(kn_\ell)}{kn_\ell}}
	\end{equation}
	for all $ \ell\ge N $ and $ 1\le i\le d $. For $ 1\le i\le d $, let $ \delta_i=\delta_i(\epsilon)=\hdim \Lambda_{k}^{(i)}(\epsilon) $ and
	\[s_i:=\delta_i\cdot\frac{(1-\epsilon)\log|\beta_i|}{(1-\epsilon)\log|\beta_i|+t_i}.\]
	Then, we can re-write \eqref{eq: log|beta|<} as
	\[\big(|\beta_i|^{-kn_\ell}\psi_i(kn_\ell)\big)^{s_i/\delta_i}\ge |\beta_i|^{-kn_\ell}.\]
	This together with \eqref{eq:cover whole space} implies that for any $ \ell\ge N $
	\[\Lambda_{k}^{(i)}=\bigcup_{j_i=1}^{M_{i,n_\ell}}B\Big(x_{n_\ell,j_i}^{(i)},\big(|\beta_i|^{-kn_\ell}\psi_i(kn_\ell)\big)^{s_i/\delta_i} \Big)\cap\Lambda_k^{(i)},\]
	and so
	\[\begin{split}
		\prod_{i=1}^d\Lambda_{k}^{(i)}		=&\limsup_{n\to\infty}\bigcup_{j_1=1}^{M_{1,n}}\cdots \bigcup_{j_d=1}^{M_{d,n}}\prod_{i=1}^dB\Big(x_{n,j_i}^{(i)},\big(|\beta_i|^{-kn}\psi_i(kn)\big)^{s_i/\delta_i} \Big)\cap\Lambda_k^{(i)}.
	\end{split}\]
	In other words, the $ \limsup $ set of `$ (s_1,\dots,s_d) $-scaled up' rectangles appearing on the right of \eqref{eq:cover whole space} satisfies \eqref{eq:enlarged ball full measure} with $ X_i=\Lambda_{k}^{(i)} $ for each $ 1\le i\le d $. Applying Theorem \ref{t:MTP RtoR} with $ v_i=(1-\epsilon)\log|\beta_i|+t_i $ and $ u_i=(1-\epsilon)\log|\beta_i| $ $ (1\le i\le d) $, we obtain the lower bound
	\[\hdim \rc(\Psi)\ge \min_{1\le i\le d}s(i,\epsilon)\]
	where
	\[s(i,\epsilon):=\sum_{k\in \ck_1(i,\epsilon)}\delta_k+\sum_{k\in\ck_2(i,\epsilon)}\delta_k\bigg(1-\frac{t_k}{(1-\epsilon)\log|\beta_i|+t_i}\bigg)+\sum_{k\in\ck_3(i,\epsilon)}\delta_k \frac{(1-\epsilon)\log|\beta_k|}{(1-\epsilon)\log|\beta_i|+t_i}\]
	and where
	\[\begin{split}
		\ck_1(i,\epsilon):&=\{k:(1-\epsilon)\log|\beta_k|\ge(1-\epsilon)\log|\beta_i|+t_i\},\\
		\ck_2(i,\epsilon):&=\{k:(1-\epsilon)\log|\beta_k|+t_k\le (1-\epsilon)\log|\beta_i|+t_i\},\\
		\ck_3(i,\epsilon):&=\{1,\dots,d\}\setminus \big(\ck_1(i,\epsilon)\cup\ck_2(i,\epsilon)\big).\\
	\end{split}\]
	Fix $ 1\le i\le d $. Letting $ \epsilon\to 0 $, we have $ \ck_1(i,\epsilon)\to \{k:\log|\beta_k|\ge \log|\beta_i|+t_i\}=\ck_1(i) $, $ \ck_2(i,\epsilon)\to \ck_2(i) $, $ \ck_3(i,\epsilon)\to \ck_3(i) $ and
	\[\lim_{\epsilon\to 0}\delta_i=\lim_{\epsilon\to 0}\hdim\Lambda_k^{(i)}(\epsilon)=1.\]
	 Thus,
	\[\lim_{\epsilon\to 0} s(i,\epsilon)=\theta_i(\bt)\quad\text{and}\quad \hdim \rc(\Psi)\ge \min_{1\le i\le d}\theta_i(\bt).\]
	Since $ \bt\in\cu(\Psi) $ is arbitrary, we have
	\[\hdim\rc(\Psi)\ge \sup_{\bt\in\cu(\Psi)}\min_{1\le i\le d}\theta_i(\bt),\]
	which completes the proof.
\end{proof}
%
%
%

\end{document}